\setlist[enumerate]{label=\emph{(\roman*)}}
\newtheorem{theorem}{Theorem}
\newtheorem{lemma}{Lemma}
\newtheorem{proposition}{Proposition}
\theoremstyle{remark}\newtheorem{remark}{Remark}
\numberwithin{equation}{section}
\DeclareMathOperator{\sech}{sech}
\newcommand\Lp{\mathcal{L}_+}
\newcommand\Lm{\mathcal{L}_-}
\newcommand\Lc{\mathcal{L}_c}
\newcommand\Lu{\mathcal{L}_1}
\newcommand\HH{\mathbf{K}}
\newcommand\MM{\mathbf{M}}
\newcommand\RR{\mathbb{R}}
\newcommand\ZZ{\mathbf{Z}}
\newcommand\EE{\mathcal{E}}
\newcommand\md{\textnormal{M}}
\newcommand\ii{\textnormal{i}}
\newcommand\tin{T_\infty}
\newcommand\tzero{T_0}
\newcommand\tstar{T_\star}
\newcommand\ths{\theta_1}
\newcommand\thc{\theta_{2}}
\newcommand\tht{\theta_{3}}
\newcommand\la{\langle}
\newcommand\ra{\rangle}
\title[Logarithmic 2-solitons for the cubic Schr\"odinger system]{Construction of 2-solitons with logarithmic distance for the one-dimensional cubic Schr\"odinger system}
\author[Y. Martel]{Yvan Martel}
\address{CMLS, \'Ecole Polytechnique, CNRS,  91128 Palaiseau, France}
\email{yvan.martel@polytechnique.edu}
\author[T.V. Nguy\~{\^e}n]{Ti\'{\^e}n Vinh Nguy\~{\^e}n }
\address{CMLS, \'Ecole Polytechnique, CNRS,  91128 Palaiseau, France}
\email{tien-vinh.nguyen@polytechnique.edu}
\begin{document} 
\begin{abstract}
We consider a system of coupled cubic Schr\"odinger equations in one space dimension
\begin{equation*}
\begin{cases}
\ii\partial_t u + \partial_x^2 u +(|u|^2 + \omega |v|^2) u =0\\
\ii\partial_t v + \partial_x^2 v+ (|v|^2 + \omega |u|^2) v=0
\end{cases}\quad
(t,x)\in \RR\times\RR,
\end{equation*}
in the non-integrable case $0 < \omega < 1$.

First, we justify the existence of a symmetric 2-solitary wave with logarithmic distance, more precisely a solution of the system satisfying
\[
\lim_{t\to +\infty}\left\| \begin{pmatrix} u(t) \\ v(t)\end{pmatrix} - \begin{pmatrix} e^{\ii t}Q (\cdot - \frac{1}{2} \log (\Omega t) - \frac{1}{4} \log \log t) \\[4pt] e^{\ii t}Q (\cdot + \frac{1}{2} \log (\Omega t) + \frac{1}{4} \log \log t)\end{pmatrix}\right\|_{H^1\times H^1} = 0\]
where $Q = \sqrt{2}\sech $ is the explicit solution of $ Q'' - Q + Q^3 = 0$ and $\Omega>0$ is a constant.
This result extends to the non-integrable case the existence of symmetric 2-solitons 
with logarithmic distance known in the integrable case $\omega=0$ and $\omega=1$
(\cite{Man,ZS}).
Such strongly interacting symmetric $2$-solitary waves were also previously constructed for the non-integrable scalar nonlinear Schr\"odinger equation in any space dimension and for any energy-subcritical power nonlinearity (\cite{MRlog,NV1}).

Second, under the conditions $0<c<1$ and $0<\omega < \frac 12 c(c+1)$, we construct solutions of the system satisfying
\[
\lim_{t\to +\infty}\left\| \begin{pmatrix}u(t) \\ v(t)\end{pmatrix} - \begin{pmatrix}e^{\ii c^2 t}Q_c (\cdot - \frac{1}{(c+1)c} \log (\Omega_c t) )\\[4pt] e^{\ii t} Q (\cdot + \frac{1}{c+1} \log (\Omega_c t))\end{pmatrix} \right\|_{H^1\times H^1}=0\]
where $Q_c(x)=cQ(cx)$ and $\Omega_c>0$ is a constant.
Such logarithmic regime with non-symmetric solitons does not exist in the integrable cases $\omega=0$ and $\omega=1$
 and is still unknown in the non-integrable scalar case.
\end{abstract}
\maketitle
\section{Introduction}
\subsection{System of cubic Schr\"odinger equations}
We consider the following one dimensional focusing-focusing system of coupled cubic Schr\"odinger equations
\begin{equation}\label{snls}\tag{coupled NLS}
\begin{cases}
\ii\partial_t u + \partial_x^2 u + \left(|u|^2 + \omega |v|^2\right) u =0\\
\ii\partial_t v + \partial_x^2 v + \left(|v|^2 + \omega |u|^2\right) v =0
\end{cases}
 \quad (t,x)\in \RR\times\RR
\end{equation}
for $u(t,x), v(t,x):\RR\times \RR \to \mathbb{C}$ and
for any parameter $0 < \omega < 1$.
The initial data 
$u(0,x) = u_0(x)$, $v(0,x) = v_0(x)$
is taken in $H^1(\RR) \times H^1(\RR) $. The Hamiltonian system \eqref{snls} arises as a model for the propagation of the electrical field in nonlinear optics. Such systems also appear to model the interaction of two Bose-Einstein condensates in different spin states. See \cite{APT,Berge,Ya}.

For $\omega = 0$, the system \eqref{snls} simply reduces to two cubic focusing Schr\"odinger equations without coupling (see \cite{APT,Ya, ZS})
\begin{equation}\label{inls}\tag{cubic NLS}
\ii \partial_t u + \partial_x^2 u + |u|^2 u = 0 \quad (t,x)\in \RR\times\RR.
\end{equation}
For $\omega = 1$, the system \eqref{snls} is called the Manakov system (see \cite{APT,Man,Ya})
\begin{equation}\label{mana}\tag{MS}
\begin{cases}
 \ii\partial_t u + \partial_x^2 u + (|u|^2 + |v|^2) u = 0\\
 \ii\partial_t v + \partial_x^2 v + (|v|^2+ |u|^2 )v = 0.
\end{cases}
\end{equation}
Both \eqref{inls} and \eqref{mana} are completely integrable.
For $0<\omega<1$, the system is not known to be integrable. 

It follows from standard arguments (see \emph{e.g.}~\cite{Ca03, GV}) that the system \eqref{snls} is locally well-posed in $H^1 \times H^1$. In this paper, we work in the framework of such $H^1\times H^1$ solutions.
Moreover, the system is invariant under the following symmetries:
\begin{itemize}
 \item Phase: $\gamma$, $ \gamma' \in \RR,\left( \begin{matrix}
 u_0(x)e^{\ii\gamma}\\
 v_0(x)e^{\ii\gamma'}
 \end{matrix}\right) \mapsto \left( \begin{matrix}
 u(t,x)e^{\ii\gamma}\\
 v(t,x)e^{\ii\gamma'}
 \end{matrix}\right) $;
 \item Scaling: $\lambda > 0$, $ \lambda \left(\begin{matrix}
 u_0 \\ v_0
 \end{matrix} \right)(\lambda x) \mapsto \lambda \left(\begin{matrix}
 u\\ v
 \end{matrix} \right) (\lambda^2 t, \lambda x) $;
 \item Space translation: $\sigma \in \RR$, $ \left( \begin{matrix}
 u_0\\ v_0
 \end{matrix}\right) (x+ \sigma)\mapsto \left(\begin{matrix}
 u\\ v
 \end{matrix} \right) ( t, x+ \sigma)$;
 \item Galilean invariance: $\beta\in \RR$, $ e^{\ii\beta x} \left(\begin{matrix}
 u_0\\ v_0
 \end{matrix} \right) (x) \mapsto e^{\ii\beta(x- \beta t)} \left(\begin{matrix}
 u\\ v
 \end{matrix} \right) (t, x- 2\beta t)$.
\end{itemize}
For $H^1\times H^1$ solutions, the following quantities are constant:
\begin{itemize}
 \item Masses: 
 \begin{equation*}
 M(u(t)) = \int_\RR |u(t,x)|^2 dx = M(u_0), \quad M(v(t)) = \int_\RR |v(t,x)|^2 dx = M(v_0);
 \end{equation*}
 \item Energy:
 \begin{equation*}
 \begin{aligned}
 E(u(t), v(t)) &= \frac{1}{2} \int_\RR \left(|\partial_x u|^2 + |\partial_x v|^2\right) (t,x) dx - \frac{1}{4} \int_\RR \left( |u|^4 + |v|^4 + 2\omega |u|^2|v|^2\right) (t,x) dx \\
 & = E(u_0, v_0);
 \end{aligned}
 \end{equation*}
 \item Momentum: 
 \begin{equation*}
 J(u(t), v(t)) = \Im \int_\RR \partial_x u(t,x) \bar{u} (t,x) dx + \Im \int_\RR \partial_x v(t,x) \bar{v} (t,x) dx = J(u_0, v_0).
 \end{equation*}
\end{itemize}
By the Gagliardo-Nirenberg inequality
$
\|u\|_{L^4}^4 \lesssim \|u\|_{L^2}^3 \|\partial_x u\|_{L^2}$
and standard arguments, the system is globally well-posed in $H^1\times H^1$
 (see \emph{e.g.} \cite{Ca03,We83}).

Let $Q$ be the ground state, defined as
\begin{equation*}
Q(x) = \frac{\sqrt{2}}{\cosh(x)} \mbox{ unique (up to translation) $H^1$ solution of $Q'' - Q + Q^3 = 0$ on $\RR$.}
\end{equation*}
Recall that (cubic NLS) admits solitary wave solutions, also called solitons, of the form
\begin{equation*}
u(t,x) = e^{\ii\gamma + \ii \lambda^2 t + \ii \beta(x- \beta t)} Q_\lambda(x - \sigma - 2 \beta t) \quad \text{with} \quad Q_\lambda(x) = \lambda Q(\lambda x)
\end{equation*}
where $\lambda>0, \gamma,\sigma,\beta\in\RR$. When $v = 0 $ (or $u = 0$), the system \eqref{snls} simplifies into (cubic NLS),
and thus we deduce soliton solutions of \eqref{snls}: 
\[
\begin{pmatrix}
u \\ v
\end{pmatrix}(t,x) = \begin{pmatrix}
e^{\ii \Gamma_1(t,x)} Q_{\lambda_1}(x - \sigma_1 - 2\beta_1 t) \\ 0 
\end{pmatrix},\quad \Gamma_1(t,x)=\gamma_1 + \lambda_1^2 t + \beta_1 (x- \beta_1 t)
\]
and 
\[
\begin{pmatrix}
u \\ v
\end{pmatrix}(t,x) = 
\begin{pmatrix} 0 \\
e^{\ii \Gamma_2(t,x)} Q_{\lambda_2}(x - \sigma_2 - 2 \beta_2 t) 
\end{pmatrix}, \quad \Gamma_2(t,x)=\gamma_2+ \lambda_2^2 t + \beta_2 (x- \beta_2 t)
\] 
for any $\lambda_j > 0, \gamma_j, \sigma_j, \beta_j \in \RR$ $(j = 1,2)$. By definition, a multi-solitary wave (or multi-soliton) is a solution behaving in large time as a sum of such single solitons. In this article, we focus on 2-solitons such that one solitary wave is carried by $u$ and the other one by $v$.

\subsection{Previous results and motivation} Multi-solitons have been studied intensively in the integrable case, \emph{i.e.} for~\eqref{inls} and \eqref{mana}, as well as for some nearly integrable models; see \cite{APT, FT, GO, KS, Olm, Ya, ZS}.
From the inverse scattering theory, there are three types of 2-solitons for \eqref{inls}:
\begin{itemize}
 \item[(a)] Two solitons with different velocities: as $t\to +\infty$, the distance between the solitons is of order $t$ (\cite{ZS}).
 \item[(b)] Double pole solutions: the two solitons have the same amplitude and their distance is logarithmic in $t$ (\cite{Olm, ZS}).
 \item[(c)] Periodic $2$-solitons: the two solitons have different amplitudes and their distance is a periodic function of time (\cite{Ya,ZS}).
\end{itemize}
More generally, the integrability theory treats the case of $K$-solitary waves for any $K\geq 2$.
Moreover, in the integrable case, multi-solitons have a pure soliton behavior for both $t\to +\infty$ and $t \to - \infty $ and describe the elastic interactions between solitons. For \eqref{mana}, a trichotomy similar to (a)-(b)-(c) is studied formally and numerically in \cite{Ya1}.

For non-integrable models, the study of multi-solitons is mostly limited to situations where solitons are decoupled, in particular, asymptotically in large time.
Consider first the scalar nonlinear Schr\"odinger equation 
\begin{equation}\label{nls}\tag{NLS}
\ii \partial_t u + \Delta u + |u|^{p-1} u = 0, \quad u(0,x) = u_0, \quad (t,x)\in \RR\times \RR^d,
\end{equation}
in any space dimension $d\geq 1$
 and for any energy subcritical power nonlinearity 
(\emph{i.e.} $p>1$ for $d=1,2$ and $1<p < 1 + \frac{4}{d-2}$ for $d\geq 3$).
This equation is known to be completely integrable only for $d = 1$ and $p = 3$, \emph{i.e.} \eqref{inls}.
Define the ground state $Q$ as the unique radial positive $H^1$ solution (up to symmetries) of 
$\Delta Q - Q + Q^p = 0$ in $\RR^d$
(for more properties of the ground state, see \cite{Ca03, GSS, PZur, We86})
and $Q_\lambda(x)=\lambda^{\frac 2{p-1}}Q(\lambda x)$, for any $\lambda>0$.
The existence of $K$-solitary waves for \eqref{nls} corresponding to case~(a), \emph{i.e.} solutions $u(t)$ of \eqref{nls} such that
\begin{equation*}
\lim_{t\to +\infty} \bigg\|u(t) - \sum_{k=1}^{K} e^{-\ii\Gamma_k(t,\cdot) } Q_{\lambda_k}(\cdot -\sigma_k- 2 \beta_k t) \bigg\|_{H^1(\RR^d)}=0
\end{equation*}
for any $\lambda_k>0$ and any two-by-two different $\beta_k\in \RR^d$, was established in \cite{CMM, MMnls, Mmulti}.

Recently, the second author proved that the dynamics (b) is also a universal regime for \eqref{nls},
by constructing two symmetric solitary waves with logarithmic distance, \cite{NV1}.
The $L^2$ critical case ($p=1+\frac 4d$), previously studied in~\cite{MRlog}, exhibits a specific blow-up behavior
also related to symmetric $2$-solitons with logarithmic distance in rescaled variables.

Turning back to the system \eqref{snls} in the non-integrable case, \emph{i.e.} for $0<\omega<1$, the existence of multi-solitary wave solutions
corresponding to case (a)
\begin{equation*}
 \lim_{t\to +\infty} \left\|\left( \begin{matrix}
 u(t)\\v(t)
 \end{matrix}\right) - \left(\begin{matrix}
 e^{-\ii\Gamma_1(t,\cdot) }Q_c(\cdot -\sigma_1- 2 \beta_1 t) \\[2pt]
 e^{-\ii\Gamma_2(t,\cdot)} Q(\cdot -\sigma_2- 2\beta_2t )
 \end{matrix} \right) \right\|_{H^1}=0,
 \end{equation*}
 for any $c> 0$ and any different velocities $\beta_1 \neq \beta_2$
 was proved in \cite{DCW} (see also \cite{IL}).

A first goal of this paper is to justify the persistence of the regime (b) for the non-integrable \eqref{snls} in presence of symmetry, following  the articles \cite{MRlog,NV1} for the scalar \eqref{nls} equation.

Second, and more importantly, we investigate the question of the (non-)persistence of the regime~(c).
Indeed, we exhibit a new logarithmic regime corresponding to non-symmetric $2$-solitons with logarithmic distance which replaces the behavior (c). 
At the formal level, the system of parameters of the $2$-solitons is not anymore integrable and periodic solutions disappear, see Remark \ref{book}. A logarithmic regime (see Theorem~\ref{th:2} and Remark \ref{compare1}) then takes place, which does not exist in the integrable cases $\omega=0$ and $\omega=1$.
To our knowledge, such question is open for the scalar equation \eqref{nls} in the non-integrable case
(see Section~\ref{discuss}).
 
\subsection{Main results.} First, we present the symmetric logarithmic regime.
\begin{theorem}
\label{th:1}
For any $0<\omega < 1$, there exists a solution $\left(\begin{smallmatrix} u\\ v\end{smallmatrix}\right)\in \mathcal C(\RR,H^1\times H^1)$ of \eqref{snls} such that
\begin{equation*}
\lim_{t\to +\infty} \left\|\left( \begin{matrix}
u(t)\\ v(t)
\end{matrix}\right) - \left(\begin{matrix}
e^{\ii t} Q(\cdot - \frac{1}{2} \log (\Omega t) - \frac{1}{4} \log \log t ) \\[4pt]
e^{\ii t} Q(\cdot + \frac{1}{2} \log (\Omega t) + \frac{1}{4} \log \log t )
\end{matrix} \right) \right\|_{H^1\times H^1} =0
\end{equation*}
where $\Omega>0$ is a constant depending on $\omega$.\end{theorem}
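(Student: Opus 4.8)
The plan is to realize the solution as a limit of solutions whose data are prescribed at times $T_n\to+\infty$, in the spirit of \cite{MRlog,NV1}. Throughout I would impose the symmetry $v(t,x)=u(t,-x)$, which is preserved by \eqref{snls} and compatible with the target profile, so that it suffices to track a single soliton. The approximate solution is the modulated two-soliton $(e^{\ii t}R(\cdot-z(t)),\,e^{\ii t}R(\cdot+z(t)))$ dressed by a Galilean phase $e^{\ii\dot z(x\mp z)/2}$, where $R=Q+(\text{small correction})$ and $z(t)$ is the half-distance. The heuristic is that the only marginal modulation parameter is $z$ (unit scaling and the base phase $e^{\ii t}$ are locked by the symmetry, and the boost $\dot z/2$ vanishes in the limit), and that the coupling $\omega|v|^2u$ alone drives its dynamics.

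To read off the law of $z$, I would compute the interaction part of the Hamiltonian on the ansatz, namely $-\tfrac\omega2\int|u|^2|v|^2\,dx$. Since $|u|^2,|v|^2\sim Q^2\sim 8e^{-2|\cdot|}$ decay at the \emph{same} rate, their overlap at separation $2z$ carries a polynomial enhancement,
\[
\int Q^2(\cdot+z)\,Q^2(\cdot-z)\,dx \;\sim\; c_0\,z\,e^{-4z},\qquad c_0>0,
\]
in contrast to the scalar case where unequal decay rates ($Q^3$ against $Q$) give no such factor. The resulting attractive interaction potential $-\tfrac{\omega}{2}c_0\,z\,e^{-4z}$ yields, after projecting the error onto $\partial_x Q(\cdot\mp z)$, the reduced second-order law
\[
\ddot z = -A\,z\,e^{-4z},\qquad A=A(\omega)>0 .
\]
Writing $z=\tfrac12\log t+g$ with $g$ slowly varying and balancing $-\tfrac1{2t^2}\sim -A\,\tfrac12\log t\,t^{-2}e^{-4g}$ forces $e^{4g}\sim A\log t$, hence
\[
z(t)=\tfrac12\log(\Omega t)+\tfrac14\log\log t+o(1),\qquad \Omega=\sqrt{A}\,,
\]
which is exactly the rate and the $\omega$-dependent constant of the statement; the $\log\log t$ term is the direct fingerprint of the extra factor $z$ in the overlap. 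I would choose the correction in $R$ so that, once $z$ solves this ODE, the residual error of the ansatz decays fast enough in time to be absorbed in the next step.

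For the rigorous construction I would run a bootstrap--compactness argument. Let $(u_n,v_n)$ solve \eqref{snls} with data at $t=T_n$ equal to the ansatz, decompose $(u_n,v_n)=(\text{modulated ansatz})+(\varepsilon_n,\eta_n)$, and fix the modulation parameters $z_n(t)$ (and conjugate boost/phase) by imposing orthogonality of the remainder to the null directions of the linearized operators $\Lp,\Lm$ at each soliton. This produces modulation equations showing $z_n$ stays close to the formal law, plus an evolution equation for $(\varepsilon_n,\eta_n)$. The engine is an almost-monotone localized energy--momentum functional $\mathcal E$ adapted to the two moving bumps, whose quadratic part is coercive on the orthogonal complement of the null directions; together with the conserved masses, energy and momentum it should close a bound
\[
\|(\varepsilon_n,\eta_n)(t)\|_{H^1\times H^1}\;\lesssim\; e^{-2z(t)}\sim t^{-1}(\log t)^{-1/2}
\]
on $[\tzero,T_n]$, uniformly in $n$. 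Passing to a weak $H^1\times H^1$ limit and invoking continuity of the flow then yields a solution on $[\tzero,+\infty)$ with vanishing remainder, which is the asserted convergence.

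I expect the decisive obstacle to be closing this energy estimate against a strong, only borderline time-integrable interaction: because $z\,e^{-4z}\sim t^{-2}$ is not small after integration in a naive Gronwall sense, one must genuinely exploit the virial/monotonicity structure of $\mathcal E$ together with the precise cancellations built into the refined profile $R$. A second, more bookkeeping, difficulty is the modulation analysis for the coupled pair $(\Lp,\Lm)$ dressed by the $\omega$-coupling: one has to verify that the symmetry reduction leaves $z$ as the single marginal direction, so that the reduced dynamics is exactly the scalar ODE above, and to track the $z\,e^{-4z}$ term sharply through the projections in order to identify the constant $A(\omega)$, hence $\Omega(\omega)$.
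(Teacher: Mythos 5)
Your formal derivation is correct and agrees with the paper's: the equal-rate tail resonance produces the overlap $\sim z\,e^{-4z}$, the reduced law $\ddot z=-A(\omega)\,z\,e^{-4z}$, and hence $z(t)=\tfrac12\log(\Omega t)+\tfrac14\log\log t+o(1)$ with $\Omega=\sqrt A$. The paper reaches the same ODE by a different route (it first inserts the correction $\varphi=e^{-\sigma}B(\cdot-\sigma_2)e^{\ii\Gamma_1}$ with $\Lu B=\kappa\omega e^xQ^2$ and then projects the error of the refined ansatz onto the translation direction, getting $\ddot\sigma=-4\alpha\sigma e^{-2\sigma}$, $\alpha=32\omega$, $\Omega=8\sqrt{2\omega}$), and the constants match yours. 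The symmetry reduction and the general scheme (data prescribed at $T_n\to\infty$, modulation with orthogonality, energy coercivity, weak-limit compactness) are also the paper's. However, two steps of your rigorous plan fail as stated, and the first one is decisive.

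You take the data at $T_n$ to be \emph{exactly} the ansatz and assert that the modulation equations keep $z_n$ close to the formal law; they do not. The logarithmic regime is the zero-energy separatrix of $\ddot z=-Az e^{-4z}$: linearizing around it gives $\ddot\delta\simeq 2\delta/t^2$, with modes $t^{2}$ and $t^{-1}$. Integrating \emph{backward} on $[\tzero,T_n]$, the modulation errors (which are at best $O(t^{-\theta})$ with $\theta<2$, i.e.\ not integrable against the dual weight of the $t^{-1}$ mode) source a drift of the relative separation that grows with $T_n$: in the paper's variable $\zeta=e^{c\sigma}/\Omega_c$ one only gets $|\dot\zeta-1|\lesssim t^{1-\theta_1}$ with $\theta_1<2$, so $|\zeta(t)-t|$ may drift by $O(T_n^{2-\theta_1})$, which for $t\ll T_n$ exits any window $t^{2-\thc}$ allowed by the bootstrap, uniformly in $n$. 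This is exactly why the paper makes the initial separation depend on a free parameter $\sigma_\infty$ (see \eqref{initial}--\eqref{eq:siginf}) and closes the estimate on the distance \emph{only} through a shooting/topological argument (the transversality computation $\dot\xi<0$ and the degree argument in Proposition~\ref{pr:boot}); the same device is needed in all comparable constructions \cite{CMM,MRlog,NV1,NV2}. Without this one-parameter adjustment your bootstrap cannot close, no matter how good the energy estimate is.

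Second, your claimed remainder bound $\|(\varepsilon,\eta)\|_{H^1}\lesssim e^{-2z}\sim t^{-1}(\log t)^{-1/2}$ is both stronger than what the method you describe can deliver and too weak for the conclusion. The residual error of the corrected ansatz has $L^2$ size $\sim\sigma e^{-2\sigma}\sim t^{-2}$, and the standard almost-conserved energy functional loses a full power of $t$ (this is the ``loss of decay'' the paper points out in Section~\ref{discuss}), yielding only $\|(\varepsilon,\eta)\|\lesssim t^{-1}$. With a remainder of size $t^{-1}$, the modulation error satisfies only $|\dot\beta+a|\lesssim t^{-1}\|\varepsilon\|+\|\varepsilon\|^2\sim t^{-2}$, hence the relative-velocity energy $g=\beta^2-\alpha(\sigma+\tfrac12)e^{-2\sigma}$ can only be bounded by $\int_t^{T_n} s^{-1}|\dot\beta+a|\,ds\sim t^{-2}$, i.e.\ of the \emph{same} order as $\beta^2\sim\tfrac14 t^{-2}$; then $\dot\sigma\simeq 2\sqrt{\alpha\sigma}\,e^{-\sigma}$ cannot be justified and the $\tfrac14\log\log t$ correction — the whole content of Theorem~\ref{th:1} — is lost. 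The paper's resolution is the extra correction term $\mathbf{S}$ in the energy functional of Section~\ref{S:ener}, which cancels the linear coupling of the remainder to the error term and gives $\|(\varepsilon,\eta)\|\lesssim t^{-\ths}$ with $\ths>1$, restoring the needed margin. Some such mechanism (a refined functional, or an ansatz corrected to higher order so that the error is $o(t^{-2})$) must be supplied where your proposal appeals generically to ``virial/monotonicity structure''; as written, this is the second place the proof breaks down.
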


Note that as $t\to +\infty$, the distance between the two solitary waves is asymptotic to
\begin{equation}\label{dsym}
y(t)= \log t+ \frac{1}{2} \log\log t+\log\Omega.
\end{equation}

\begin{remark}
An analogous dynamics was constructed for \eqref{inls} in \cite{Olm,ZS} 
and for \eqref{nls} in \cite{MRlog,NV1}.
\end{remark}

Second, we construct for \eqref{snls} a new logarithmic dynamics of 2-solitary waves with different amplitude.
\begin{theorem}
\label{th:2}
For any $0 < c <1$ and $0 <\omega < \frac 12 c(c+1) < 1$,
 there exists a solution $\left(\begin{smallmatrix} u\\ v\end{smallmatrix}\right)\in \mathcal C(\RR,H^1\times H^1)$ of \eqref{snls} such that
 \begin{equation*}
 \lim_{t\to +\infty}\left\|\left( \begin{matrix}
 u(t)\\v(t)
 \end{matrix}\right) - \left(\begin{matrix}
 e^{\ii c^2 t}Q_c(\cdot - \frac{1}{(c+1)c} \log (\Omega_c t)) \\[4pt]
 e^{\ii t} Q(\cdot + \frac{1}{c+1} \log (\Omega_c t) )
 \end{matrix} \right) \right\|_{H^1\times H^1} =0 
 \end{equation*}
where $\Omega_c > 0$ is a constant depending on $c$ and $\omega$.\end{theorem}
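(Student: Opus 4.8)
\medskip\noindent\emph{Outline of the argument.}
The plan is to construct the solution by backward integration from a sequence of large times $S_n\to+\infty$, combined with uniform modulation and energy estimates and a topological selection argument, as developed for the scalar equation in \cite{MRlog,NV1} and adapted here both to the coupled system \eqref{snls} and, crucially, to the non-symmetric configuration. The first step is formal: insert the modulated two-soliton ansatz $u\approx e^{\ii\Gamma_1}Q_{\lambda_1}(\cdot-y_1)$, $v\approx e^{\ii\Gamma_2}Q_{\lambda_2}(\cdot-y_2)$, with slowly varying parameters $\lambda_1\approx c$, $\lambda_2\approx 1$, phases $\Gamma_j$, positions $y_j$ and velocities $\beta_j=\tfrac12\dot y_j$, into \eqref{snls} and compute the interaction. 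The decisive structural point is that the coupling acts only through $|u|^2$ and $|v|^2$ and is therefore \emph{phase-blind}: the relative phase $(c^2-1)t$ rotates but never enters the leading interaction. This is exactly why a non-symmetric --- hence different-frequency --- logarithmic regime can exist here, while for the scalar equation the phase-dependent interaction of two solitons with different frequencies oscillates and no such regime is known. The dominant contribution comes from the slowest tail, that of the smaller bubble $Q_c$ (decaying like $e^{-c|x|}$), reaching the center of its partner; a momentum computation shows it produces an \emph{attractive} force of size $e^{-2cR}$, where $R=y_1-y_2$.

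This leads, to leading order, to the reduced system $\dot y_j=2\beta_j$, $c\beta_1+\beta_2\approx0$ (vanishing total momentum $J$), and $\ddot R\approx-Ke^{-2cR}$ with $K>0$. Its separatrix escaping to $R=+\infty$ is the sought trajectory: it yields $R\sim\tfrac1c\log(\Omega_c t)$ and, with the momentum constraint, positions in the ratio $-1/c$ as in the statement, while the scalings $c,1$ fix the frequencies $c^2,1$ and hence the phases $e^{\ii c^2t},e^{\ii t}$. The hypothesis $0<\omega<\tfrac12c(c+1)$ is the sign/solvability condition that the \emph{refined} reduced system requires for this separatrix to exist with the scalings relaxing to $c$ and $1$. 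The symmetric endpoint $c=1$ is marginal: there the equal decay rates make the tail overlap gain a logarithmic factor (equivalently $\int Q^2e^{2c\xi}\,d\xi$ diverges as $c\to1$), which is what forces the $\log\log$ correction of Theorem~\ref{th:1} and is absent for $c<1$, giving the clean logarithm above.

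For the rigorous step, for each $n$ I would solve \eqref{snls} backward from $t=S_n$ with final data the two-soliton sum whose parameters lie on the separatrix, decompose $(u,v)=(\text{ansatz})+\varepsilon$, and fix the parameters by imposing orthogonality conditions on $\varepsilon$. A functional built from $E+c^2M(u)+M(v)+\beta J$ together with localized lower-order corrections is coercive and controls $\|\varepsilon\|_{H^1\times H^1}$ along a bootstrap on $[T_0,S_n]$: for large $R$ the associated Hessian decouples into the single-soliton quadratic forms of $\Lp,\Lm$ around $Q_c$ and $Q$, coercive modulo their generalized kernels up to an $O(e^{-2cR})$ coupling. The neutral kernel directions are absorbed by modulation, and the finitely many \emph{unstable} directions --- intrinsic to the threshold (separatrix) nature of the regime --- are removed by a Brouwer-type topological argument selecting the free parameters of the final data at $S_n$, so that the bootstrap closes uniformly in $n$; a compactness argument then lets $n\to+\infty$ and produces the solution on $[T_0,+\infty)$ with the stated asymptotics.

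I expect the main obstacle to be the conjunction of two features, both new with respect to the symmetric construction. First, the \emph{asymmetry}: the two bubbles decay at different rates $e^{-c|x|}$ and $e^{-|x|}$, so the interaction is genuinely multi-scale, and one must cleanly isolate the dominant $e^{-2cR}$ force while checking that the sub-leading scaling dynamics of the larger bubble is compatible with convergence --- this is precisely the step in which the restriction $\omega<\tfrac12c(c+1)$ is used. Second, the \emph{threshold} nature of the regime forces a careful count of the unstable directions and a topological selection that, in the absence of the symmetry available for Theorem~\ref{th:1}, cannot be reduced in dimension and must be carried out directly on the full parameter system.
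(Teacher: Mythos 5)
Your overall scheme --- backward integration from final times, modulation around a two-soliton ansatz, energy coercivity, bootstrap, topological selection of the threshold trajectory, compactness --- is indeed the one used in the paper, and your formal picture (phase-blind coupling, attractive force of order $e^{-2c\sigma}$ carried by the $Q_c$ tail, zero-momentum constraint fixing the ratio $-1/c$ of the positions, separatrix giving $\sigma\sim\frac1c\log(\Omega_c t)$) is correct. But two of your structural claims are wrong, precisely at the points where this theorem differs from the symmetric and scalar cases. First, the linearized energy does \emph{not} decouple into the quadratic forms of $\Lp,\Lm$ around $Q_c$ and $Q$ up to an $O(e^{-2cR})$ coupling. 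Expanding the Hamiltonian, only the off-diagonal $\varepsilon$--$\eta$ terms (proportional to $\Re(U\bar\varepsilon)\Re(V\bar\eta)$) are exponentially small; the diagonal form acting on the $u$-perturbation $\varepsilon$ contains $-\omega\int |R|^2|\varepsilon|^2$, an $O(1)$ potential well sitting at the \emph{other} soliton. Since all orthogonality conditions on $\varepsilon$ are taken relative to the $Q_c$-soliton, nothing is available to control this term, and one needs \emph{unconditional} positivity of $\Lc=-\partial_x^2+c^2-\omega Q^2$. By the identity $\Lc Q^\rho=(c^2-\rho^2)Q^\rho$ with $\omega=\frac12\rho(\rho+1)$, this holds only for $\omega\leq\frac12 c(c+1)$: the hypothesis of Theorem~\ref{th:2} is the coercivity condition of the energy method (step 1 of the proof of Proposition~\ref{prop:coer}, which is also why the paper leaves the case $\frac12c(c+1)\leq\omega<1$ open), not a ``sign/solvability condition for the refined reduced system'' nor a constraint on the ``sub-leading scaling dynamics of the larger bubble'' as you assert. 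As described, your coercivity step fails.

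Second, the bare two-soliton sum is not an admissible approximate solution here. The coupling term $\omega|V|^2U\approx\omega\, Q^2(\cdot-\sigma_2)\,Q_c(\cdot-\sigma_1)\,e^{\ii\Gamma_1}$ has $L^2$-size $e^{-c\sigma}\sim t^{-1}$, the same order as the modulation vector; with a source of that size in $\EE_U$ the bootstrap $\|\varepsilon\|_{H^1}\leq t^{-\ths}$, $\ths>1$, cannot close (the energy inequality only returns $\|\varepsilon\|_{H^1}\lesssim t^{-\ths/2}$). The paper removes this term by inserting into $U$ the refined profile $\varphi=e^{-c\sigma}A(\cdot-\sigma_2)e^{\ii\Gamma_1}$, where $A$ solves $\Lc A=c\kappa\omega\, e^{cx}Q^2$ (Lemma~\ref{le:AB}, whose solvability and decay again rest on the hypothesis on $\omega$ through $\Lc$); this reduces the error to $O(e^{-2c\sigma})$. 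Moreover $\varphi$ contributes to the force at leading order: the constant is $\alpha_c=4c^2\omega\|e^{cx}Q\|_{L^2}^2+\frac12\la\Lc A,A\ra$, so your tail-only momentum computation yields the correct sign and exponent but not the correct $\Omega_c$; Remark~\ref{rk:regime} stresses that the refined term is needed even to identify the regime. Two smaller discrepancies: the paper never modulates the scaling parameters (the two mass conservation laws and $L^2$-subcriticality make this unnecessary), and its topological argument is one-dimensional (the single parameter $\sigma_\infty$), rather than a selection over ``the full parameter system''.
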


Note that as $t\to +\infty$, the distance between the two solitary waves is asymptotic to
\begin{equation}\label{dnsym}
y_{c}(t)=\frac{1}{c}\log t +\frac 1c \log \Omega_c \, .
\end{equation}
As mentioned before, such solution does not exist in the integrable cases and 
the analogous question 
for the non-integrable scalar equation \eqref{nls} seems open. See Section~\ref{discuss}.

\begin{remark}\label{compare1}
The slight difference between the two regimes \eqref{dsym} and \eqref{dnsym} is due to stronger interactions when solitary waves have equal amplitudes.
We refer to Sections~\ref{formal1} and~\ref{formalc} for formal derivations of the regimes~\eqref{dsym} and~\eqref{dnsym}.

We believe that there is no other logarithmic regime for \eqref{snls}.
In support of this conjecture, we refer to the case of the generalized Korteweg-de Vries equation, for which existence of a logarithmic regime was proved in \cite{NV2} and uniqueness (in the super-critical case) was established in~\cite{J}.

The case $\frac 12 c(c+1) \leq \omega < 1 $ in Theorem~\ref{th:2} is open
(see step~1 of the proof of Proposition~\ref{prop:coer}).
\end{remark}

\begin{remark}\label{book}
The dynamics of the distance between the two solitary waves is related to nonlinear interactions. A formal study (see notably \cite{GO, KS} and Chapter~4 in \cite{Ya}) shows that the three behaviors (a), (b) and (c) are related to different solutions of
\begin{equation*}
\begin{cases}
\ddot{\gamma} = c_{\gamma} e^{- \sigma}\sin \gamma \\
\ddot{\sigma} = - c_{\sigma} e^{- \sigma} \cos \gamma 
\end{cases}
\end{equation*}
where $\gamma$ is the phase difference, $\sigma$ the relative distance and $c_{\gamma}$, $c_{\sigma}$
are constants. For \eqref{inls}, it holds $c_\gamma = c_\sigma  >0$. Denoting $Y = \sigma + \ii \gamma$, the resulting equation
$\ddot{Y} = - c_\gamma e^Y$
is integrable and admits nontrivial solutions for which $\sigma$ is periodic.
\end{remark}

\begin{remark}
The proofs of Theorems~\ref{th:1} and~\ref{th:2} follow the overall strategy of several previous articles on multi-solitons (\cite{KMR,Martel1,MMnls,MMT1,MMT2,MRlog,Mmulti,NV1,RaSz11}), particularly of \cite{MRlog,NV1} which started the study of multi-solitons with logarithmic distance in a non-integrable setting.
We focus on the proof of Theorem~\ref{th:2}, which is more original in the construction of a suitable approximate solution
and the determination of the asymptotic regime (see Remark~\ref{rk:regime}).

See Section~\ref{discuss} for a comment on the introduction of a refined energy method.
\end{remark}

\subsection{Notation and preliminaries} 
For   complex-valued functions $f,g\in L^2(\RR)$, we denote
\[
\la f,g\ra = \Re \left(\int f \overline g\right).
\]
For $r$ a positive function of time, the notation
$f(t,x) = O_{H^{1}} (r(t))$ means that there exists a constant $C > 0$ such that
$\|f(t)\|_{H^1} \leq C r(t)$.

For any $\lambda>0$ and any function $f$, let
\[
f_\lambda(x) = \lambda f(\lambda x)\quad\mbox{and}\quad
\Lambda f(x)=f(x)+xf'(x)=\partial_\lambda f_\lambda(x)_{|\lambda=1}.
\]
Note the following relation which describes the asymptotics of $Q(x)$ as $x\to-\infty$, 
\begin{equation}\label{asympQ}
Q(x)=\kappa e^{x} -e^{2x} Q(x) \mbox{ on $\RR$ where $\kappa=2\sqrt{2}$.}\end{equation}
Throughout this paper, we consider $\omega$ and $c$ such that
\begin{equation}\label{omega}
0<c\leq 1\quad\mbox{and}\quad 0<\omega<\frac {c(c+1)}2.
\end{equation} 
The linearization of ~\eqref{snls} around solitons involves the following operators:
\begin{equation*}
\Lp =-\partial_x^2+1-3Q^{2},\qquad
\Lm =-\partial_x^2+1-Q^{2},
\quad
\Lc = -\partial_x^2 + c^2 -\omega Q^2.
\end{equation*}
Recall the special relations (\cite{We85}) 
\begin{equation}\label{e:s}
\Lm Q=0,\quad \Lp(\Lambda Q)=-2Q,\quad
\Lp (Q')=0,\quad \Lm(xQ)=- 2 Q'.
\end{equation}
We will use the following properties of these operators.
\begin{lemma}\label{lem:0}
Assume~\eqref{omega}.
\begin{enumerate}
\item There exists $\mu>0$ such that, for all $z\in H^1$, 
\begin{align*} 
&\la \Lp \Re z,\Re z\ra + \la \Lm\Im z,\Im z\ra \geq \mu \|z\|_{H^1}^2
-\frac 1\mu\left( \la z,Q \ra^2+\la z,xQ \ra^2 + \la z, \ii \Lambda Q \ra^2 \right),\\
& \la \Lc z , z \ra \geq \mu \|z\|^2_{H^1}.
\end{align*}
\item 
For any $f \in L^2$, there exists a unique solution $u \in H^2$ of $\Lc u = f$.
Moreover,
\begin{itemize}
\item[--] If $|f(x)| \lesssim e^{-\lambda |x|}$ for some $\lambda >c$, then $|u(x)| \lesssim e^{ - c|x|}$.
\item[--] If $|f(x)| \lesssim e^{-c |x|}$ then $|u(x)| \lesssim (1+|x|)e^{ - c|x|}$.
\end{itemize}
\end{enumerate}
\end{lemma}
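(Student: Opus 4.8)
The first part is a coercivity statement for a block-diagonal Schrödinger operator, and the second is an elliptic-type estimate with exponential weights for the scalar operator $\Lc$. I would treat the two parts separately, as the tools are quite different.

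For part (1), the plan is to reduce the two coercivity bounds to classical spectral facts about $\Lp$, $\Lm$, and $\Lc$. The first inequality decouples on real and imaginary parts, so I would handle $\la \Lp \Re z, \Re z\ra$ and $\la \Lm \Im z, \Im z\ra$ essentially independently. The key structural input is the standard spectral theory of the linearized NLS operators (Weinstein, \cite{We85}): $\Lm \geq 0$ with kernel spanned by $Q$, and $\Lp$ has a single negative eigenvalue with $\Lp Q' = 0$ giving its kernel. Using the relations \eqref{e:s}, the negative direction of $\Lp$ is controlled by $\la \cdot, \Lambda Q\ra$ (since $\la \Lambda Q, \Lp \Lambda Q\ra = -2\la \Lambda Q, Q\ra < 0$) and the kernels by $\la \cdot, Q\ra$ for $\Lm$ and by $\la \cdot, Q'\ra$ for $\Lp$; after imposing the stated orthogonality penalties one recovers a positive lower bound modulo the quadratic correction terms. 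I would first prove a nondegenerate bound under the orthogonality conditions $\la z, Q\ra = \la z, xQ\ra = \la z, \ii\Lambda Q\ra = 0$ by a standard compactness/contradiction argument, then remove the orthogonality by subtracting suitable multiples and absorbing the errors, which produces exactly the $-\frac{1}{\mu}(\cdots)$ term. The bound $\la \Lc z, z\ra \geq \mu\|z\|_{H^1}^2$ is simpler: since $\Lc = -\partial_x^2 + c^2 - \omega Q^2$ and $\Lp = -\partial_x^2 + 1 - 3Q^2$, one checks $\Lc \geq \Lp$-type positivity directly. The cleanest route uses \eqref{omega}: because $\omega < \tfrac12 c(c+1)$ the potential $\omega Q^2$ is a sufficiently small perturbation that $\Lc$ stays strictly positive with no negative eigenvalue and no kernel, and strict positivity of the bottom of the spectrum plus the $c^2$ mass gap upgrades the quadratic-form bound to an $H^1$ bound.

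For part (2), existence and uniqueness of $u \in H^2$ solving $\Lc u = f$ follows from the invertibility established in part (1): since $\Lc$ is self-adjoint with spectrum bounded below by a positive constant, $0$ is in the resolvent set, so $\Lc^{-1}: L^2 \to H^2$ is bounded. The pointwise decay statements are where the real work lies. The plan is to use the asymptotics of the Green's function: away from the origin $\Lc$ behaves like $-\partial_x^2 + c^2$, whose fundamental solution decays like $e^{-c|x|}$. I would write $u$ via variation of constants against the two exponential solutions $e^{\pm c x}$ of the free equation (the potential $-\omega Q^2$ is exponentially localized, so it only perturbs the homogeneous solutions by exponentially small amounts and does not change the decay rate $c$). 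For the first bullet, when $|f| \lesssim e^{-\lambda|x|}$ with $\lambda > c$, the convolution of $e^{-c|x|}$ with a faster-decaying source still decays at the slower rate $e^{-c|x|}$, giving the claimed bound. For the second bullet, the resonant case $|f| \lesssim e^{-c|x|}$ produces the logarithmically-weighted factor: convolving $e^{-c|x|}$ with $e^{-c|x|}$ yields precisely a $(1+|x|)e^{-c|x|}$ decay, the extra polynomial factor reflecting the matching of decay rates.

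The main obstacle I expect is part (2), specifically controlling the effect of the potential on the decay of solutions and making the variation-of-constants argument rigorous rather than heuristic. One must verify that the genuine solution $u$ picks out the correct (decaying, rather than exponentially growing) homogeneous contributions at both $\pm\infty$, which requires using the $H^2$ membership from part (1) to discard the growing modes. Handling the resonant threshold carefully to extract the sharp $(1+|x|)$ factor, rather than a weaker bound, is the delicate point; a clean way is to set up an integral equation for $w = e^{c|x|}u$ (or $w=(1+|x|)^{-1}e^{c|x|}u$ in the resonant case) and run a fixed-point or Gronwall argument on the half-line, exploiting the exponential localization of $Q^2$ to ensure the relevant integrals converge.
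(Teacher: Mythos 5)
Your plan for part (2) essentially coincides with the paper's proof (the paper writes $\Lc u=f$ as $-u''+c^2u=g$ with $g=f+\omega Q^2 u$, notes that $g$ inherits the decay of $f$ because $Q^2\sim e^{-2|x|}$ and $u\in H^2\subset L^\infty$, and then invokes the variation-of-constants argument you describe, including the resonant $(1+|x|)$ factor), and for the $\Lp$/$\Lm$ coercivity the paper simply cites \cite{MMnls,We85,We86}. The genuine gap is at the one point where the hypothesis \eqref{omega} must actually do work: the bound $\la \Lc z,z\ra\geq\mu\|z\|_{H^1}^2$. You assert that ``because $\omega<\frac12 c(c+1)$ the potential $\omega Q^2$ is a sufficiently small perturbation that $\Lc$ stays strictly positive.'' This is not a smallness phenomenon: $\omega$ is allowed to range up to the sharp threshold $\frac12 c(c+1)$, and a perturbative argument could only give positivity for $\omega$ small depending on $c$, not on the whole claimed range. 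The paper's mechanism is exact, not perturbative: choose $\rho\in(0,c)$ with $\omega=\frac12\rho(\rho+1)$; a direct computation (or \cite{T}) shows that the positive function $Q^\rho$ satisfies $\Lc Q^\rho=(c^2-\rho^2)Q^\rho$, i.e.\ $Q^\rho$ is an explicit positive $L^2$ eigenfunction of the P\"oschl--Teller-type operator $\Lc=-\partial_x^2+c^2-2\omega\sech^2$. Since a positive eigenfunction of a one-dimensional Schr\"odinger operator is necessarily its ground state, $\inf\operatorname{spec}\Lc=c^2-\rho^2>0$, which gives the $L^2$ lower bound, and the $H^1$ bound follows by absorbing a small multiple of the form into itself. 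Note that the coercivity constant degenerates as $\omega\uparrow\frac12 c(c+1)$ (then $\rho\uparrow c$), which is exactly why the paper leaves the case $\frac12 c(c+1)\leq\omega<1$ open (Remark \ref{compare1}, step 1 of the proof of Proposition \ref{prop:coer}); your ``small perturbation'' reasoning cannot see this threshold, and without the explicit eigenfunction (or an equivalent sharp spectral computation) the statement is unproven on the stated range of $\omega$.

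A secondary issue in part (1): your matching of penalization terms to spectral obstructions is scrambled. The terms $\la z,Q\ra$ and $\la z,xQ\ra$ involve only $\Re z$, so they must neutralize the two bad directions of $\Lp$: orthogonality to $Q$ removes the negative direction (the subcritical argument using $\Lp\Lambda Q=-2Q$ and $\la \Lp\Lambda Q,\Lambda Q\ra=-\|Q\|_{L^2}^2<0$), and orthogonality to $xQ$ removes the kernel direction $Q'$ (since $\la Q',xQ\ra=-\frac12\|Q\|_{L^2}^2\neq0$); there is no $\la\cdot,Q'\ra$ penalization in the statement. The term $\la z,\ii\Lambda Q\ra=\la \Im z,\Lambda Q\ra$ involves only $\Im z$ and removes the kernel $Q$ of $\Lm$ (since $\la Q,\Lambda Q\ra=\frac12\|Q\|_{L^2}^2\neq0$). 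As written, your plan controls the negative direction of $\Lp$ (which acts on $\Re z$) by a functional of $\Im z$, and the kernel of $\Lm$ (which acts on $\Im z$) by a functional of $\Re z$; neither can work. The underlying classical facts you invoke are correct and citable, but the bookkeeping must be fixed for the compactness-plus-absorption argument to go through.
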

\begin{proof}
(i) The coercivity properties of $\Lp$ and $\Lm$ (here in the $L^2$ sub-critical case) are well-known facts (see \emph{e.g.} \cite{MMnls, We85, We86}).

Let $0<\rho<c$ be such that $\omega=\frac12 \rho(\rho+1)$.
By \cite{T} or direct computation, we see that the positive function $Q^\rho$ satisfies $\Lc Q^\rho=(c^2-\rho^2)Q^\rho$.
The coercivity property follows.

(ii) Let $c\leq \lambda\leq 1$. If $\Lc u=f$ with $|f(x)| \lesssim e^{-\lambda |x|}$ then $-u''+c^2 u=g$ where $g=f+\omega Q^2 u$ also satisfies $|g(x)| \lesssim e^{-\lambda |x|}$. The decay properties of $u$ then follows from standard arguments.
\end{proof}
The following result follows directly from Lemma~\ref{lem:0}.
\begin{lemma}\label{le:AB}
\begin{enumerate} 
\item Assume $0<c<1$.
There exists a solution $A$ of 
\begin{equation}\label{eq:A}
\Lc A=-A''+c^2 A-\omega Q^2 A = c\kappa \omega e^{cx} Q^2
\end{equation}
satisfying
\begin{equation}\label{sur:A}
|A(x)|+|A'(x)|+|A''(x)|\lesssim Q_c(x) \quad \mbox{on $\RR$}.
\end{equation}
\item 
There exists a solution $B$ of 
\begin{equation}\label{eq:B}
\Lu B=-B''+ B -\omega Q^2 B = \kappa \omega e^{x} Q^2
\end{equation}
satisfying
\begin{equation}\label{sur:B}
|B(x)|+|B'(x)|+|B''(x)|\lesssim (1+|x|) Q(x) \quad \mbox{on $\RR$}.
\end{equation}
\end{enumerate}
\end{lemma}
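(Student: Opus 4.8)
The plan is to deduce both statements directly from Lemma~\ref{lem:0}, applied to $\Lc$ for~\eqref{eq:A} and to $\Lu=\Lc|_{c=1}$ for~\eqref{eq:B}. In each case I would first check that the explicit right-hand side lies in $L^2$ and has exponential decay, then invoke the relevant decay estimate of Lemma~\ref{lem:0}(ii) to bound the solution itself, and finally recover the derivative bounds by a short integration argument.

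For~\eqref{eq:A}, I would start from $Q(x)\sim\kappa e^{-|x|}$, hence $Q^2(x)\lesssim e^{-2|x|}$, to get that $f=c\kappa\omega e^{cx}Q^2$ satisfies $|f(x)|\lesssim e^{-(2-c)x}$ as $x\to+\infty$ and $|f(x)|\lesssim e^{(2+c)x}$ as $x\to-\infty$, so $|f(x)|\lesssim e^{-(2-c)|x|}$ on $\RR$ with $f\in L^2$. Since $0<c<1$ forces $2-c>c$, the first bullet of Lemma~\ref{lem:0}(ii) provides the unique $H^2$ solution $A$ with $|A(x)|\lesssim e^{-c|x|}$, which is the desired $|A(x)|\lesssim Q_c(x)$ because $Q_c(x)=cQ(cx)\asymp e^{-c|x|}$ on $\RR$.

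For~\eqref{eq:B} the key observation, which I would emphasize, is that the problem is exactly resonant: with $c=1$ the right-hand side $g=\kappa\omega e^{x}Q^2$ decays like $e^{-x}$ as $x\to+\infty$ and like $e^{3x}$ as $x\to-\infty$, so $|g(x)|\lesssim e^{-|x|}$ on $\RR$ but the decay rate at $+\infty$ matches $c=1$. Hence one lands in the second bullet of Lemma~\ref{lem:0}(ii), which yields $B$ with the unavoidable loss $|B(x)|\lesssim(1+|x|)e^{-|x|}\lesssim(1+|x|)Q(x)$. This threshold resonance---present for $c=1$ but not for $c<1$---is the source of the $(1+|x|)$ weight and, downstream, of the $\tfrac12\log\log t$ correction distinguishing~\eqref{dsym} from~\eqref{dnsym}.

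To finish I would upgrade these bounds to the derivatives using the equations themselves. Writing $A''=c^2A-\omega Q^2A-f$ gives $|A''(x)|\lesssim e^{-c|x|}$ at once; since $A\in H^2\subset C^1$ forces $A'(\pm\infty)=0$, integrating $A''$ from $\pm\infty$ yields $|A'(x)|\lesssim e^{-c|x|}$ and hence~\eqref{sur:A}. The identical scheme applied to $B''=B-\omega Q^2B-g$, together with $\int_x^{+\infty}(1+s)e^{-s}\,ds=(2+x)e^{-x}$, gives $|B'(x)|+|B''(x)|\lesssim(1+|x|)e^{-|x|}$ and hence~\eqref{sur:B}. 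The only genuinely delicate point is this bookkeeping of decay rates: recognizing that~\eqref{eq:A} is non-resonant while~\eqref{eq:B} sits exactly at the resonant threshold; once that is settled, everything reduces to Lemma~\ref{lem:0} and elementary integration.
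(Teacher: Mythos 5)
Your proposal is correct and follows exactly the route the paper intends: the paper gives no separate proof of Lemma~\ref{le:AB}, stating only that it ``follows directly from Lemma~\ref{lem:0}'', and your argument supplies precisely those details --- checking $|f|\lesssim e^{-(2-c)|x|}$ with $2-c>c$ (non-resonant, first bullet) for $A$, the threshold case $|g|\lesssim e^{-|x|}$ (second bullet) for $B$, and recovering the derivative bounds from the equations by integration. The bookkeeping of decay rates, the use of $0<c<1$ to avoid resonance, and the identification of the resonant loss $(1+|x|)$ at $c=1$ are all accurate.
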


\section{Approximate solution in the case $0<c<1$}\label{S:3}
\subsection{Definition of the approximate solution}\label{S:2:1}
Consider $\mathcal{C}^1$ time-dependent real-valued functions 
$\sigma_1$, $\sigma_2$, $\gamma_1$, $\gamma_2$, $\beta_1$, $\beta_2$, to be fixed later
and set
\[\sigma = \sigma_1 - \sigma_2 , \quad \beta = \beta_1 - \beta_2 , \quad \gamma = \gamma_1 - \gamma_2.\]
Denote
\begin{align*}
U&=P+\varphi,\quad P (t,x) = Q_c(x - \sigma_1 (t)) e^{\ii\Gamma_1(t,x)}, \quad \varphi(t,x) = e^{-c\sigma(t)} A(x-\sigma_2(t)) e^{\ii \Gamma_1(t,x)},
\\
V&=R,\quad R (t,x) = Q(x - \sigma_2 (t)) e^{\ii\Gamma_2(t,x)},
\end{align*}
where
\[\Gamma_1 (t,x) = c^2 t + \gamma_1 (t)+ \beta_1(t) x,\quad
\Gamma_2 (t,x) = t + \gamma_2 (t)+ \beta_2(t) x.\]
Introduce the notation
\begin{align*}
&\partial_1P=Q_c'(x-\sigma_1)e^{\ii \Gamma_1},\quad
x_1P=(x-\sigma_1)P,\quad \Lambda_1 P = \Lambda Q_c (x-\sigma_1)e^{\ii \Gamma_1},\\
&\partial_1\varphi=e^{-c\sigma}A'(x-\sigma_2)e^{\ii \Gamma_1},\quad
x_2\varphi=(x-\sigma_2)\varphi,\\
&\partial_2R=Q'(x-\sigma_2)e^{\ii \Gamma_2},\quad
x_2R=(x-\sigma_2)R,\quad \Lambda_2 R = \Lambda Q (x-\sigma_2)e^{\ii \Gamma_2}.
\end{align*}
Define the approximate solution
\begin{equation*}
\ZZ = \begin{pmatrix}
U \\ V 
\end{pmatrix}\quad \mbox{and set}\quad
\EE_\ZZ = \begin{pmatrix}
\EE_{U}\\ \EE_{V}
\end{pmatrix} = \begin{pmatrix}
\ii\partial_t U + \partial_x^2 U + \left(|U|^2 + \omega |V|^2\right) U\\[4pt]
\ii\partial_t V + \partial_x^2 V + \left(|V|^2 + \omega |U|^2\right) V
\end{pmatrix}.
\end{equation*}

\begin{lemma}
It holds
\begin{equation}\label{on:UV}\left\{\begin{aligned}
\EE_{U}&= F-\vec{m}_1 \cdot \vec\md_1-\vec{m}_\varphi\cdot\vec\md_\varphi\\
\EE_{V}&= G-\vec{m}_2 \cdot \vec\md_2
\end{aligned}\right.\end{equation}
where
\begin{equation}\label{def:FG}\left\{\begin{aligned}
F&=3|P|^2\varphi+3|\varphi|^2P+|\varphi|^2\varphi-\omega e^{2c (x -\sigma_1)} |R|^2 P\\
G&=\omega |P+\varphi|^2R 
\end{aligned}\right.\end{equation}
and
\begin{align*}
\vec{m}_1 =
\begin{pmatrix} \dot{\sigma}_1-2\beta_1 \\ \dot\gamma_1 +\dot\beta_1 \sigma_1 +\beta_1^2\\ \dot{\beta}_1 \end{pmatrix},
&\quad 
\vec\md_1=\begin{pmatrix}\ii \partial_1P \\ P \\x_1P \end{pmatrix}
\\
\vec{m}_\varphi =
\begin{pmatrix}\dot{\sigma}_2-2\beta_1 \\ \dot\gamma_1 +\dot\beta_1 \sigma_2 +\beta_1^2+\ii c \dot\sigma\\ \dot{\beta}_1 \end{pmatrix},
&\quad
\vec\md_\varphi=\begin{pmatrix}\ii \partial_1 \varphi \\ \varphi \\ x_2\varphi \end{pmatrix}
\\
\vec{m}_2 =
\begin{pmatrix}\dot{\sigma}_2-2\beta_2 \\ \dot\gamma_2 +\dot\beta_2 \sigma_2 +\beta_2^2\\ \dot{\beta}_2 \end{pmatrix},&\quad
\vec\md_2=\begin{pmatrix} \ii \partial_2 R\\ R \\ x_2R \end{pmatrix}.
\end{align*}
\end{lemma}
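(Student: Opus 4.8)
The plan is to verify \eqref{on:UV} by direct computation, treating the two components separately and exploiting that $P$ and $\varphi$ carry the common phase $e^{\ii\Gamma_1}$ with real amplitudes.

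For the $V$-component, $V=R=Q(x-\sigma_2)e^{\ii\Gamma_2}$ is a modulated ground state, so I would compute $\ii\partial_tR$, $\partial_x^2R$ and $|R|^2R=Q^2(x-\sigma_2)R$ and use the profile equation $Q''=Q-Q^3$ to remove the second derivative. Splitting the modulation term $\dot\beta_2x$ via $x=(x-\sigma_2)+\sigma_2$, the contributions of $\dot\sigma_2$, $\dot\gamma_2$, $\dot\beta_2$ reorganize into $-\vec m_2\cdot\vec\md_2$, while the coupling $\omega|U|^2R$ is left untouched and is exactly $G$. This gives the second line of \eqref{on:UV}.

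For the $U$-component, I would write $U=\bigl(Q_c(x-\sigma_1)+e^{-c\sigma}A(x-\sigma_2)\bigr)e^{\ii\Gamma_1}$, a real amplitude times a single phase, so that $|U|^2U-|P|^2P$ expands with no cross-phase terms into the cubic part $3|P|^2\varphi+3|\varphi|^2P+|\varphi|^2\varphi$ of $F$. The pure-$P$ contribution $\ii\partial_tP+\partial_x^2P+|P|^2P$ is treated as for $V$, now using $Q_c''=c^2Q_c-Q_c^3$, and yields $-\vec m_1\cdot\vec\md_1$. It then remains to compute $\ii\partial_t\varphi+\partial_x^2\varphi$ and to add the coupling $\omega|R|^2U=\omega|R|^2P+\omega|R|^2\varphi$. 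Here one must keep track that $\varphi$ is centered at $\sigma_2$ but carries the velocity $\beta_1$ and phase $\Gamma_1$: splitting $\dot\beta_1x$ around $\sigma_2$ produces $\dot\beta_1\sigma_2$ and $x_2\varphi$, while the time derivative of the prefactor $e^{-c\sigma}$ produces the extra imaginary term $-\ii c\dot\sigma\,\varphi$; these are precisely the entries of $\vec m_\varphi$, which is the origin of the mixed indices in $\vec m_\varphi$ and $\vec\md_\varphi$.

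The crucial step is to substitute the defining equation \eqref{eq:A} for $A$ to eliminate $A''$. The resulting term $-\omega Q^2A$ contributes $-\omega|R|^2\varphi$, which cancels the coupling $\omega|R|^2\varphi$; the source term contributes, after gathering the prefactors $e^{-c\sigma}e^{c(x-\sigma_2)}=e^{c(x-\sigma_1)}$, a quantity $-c\kappa\omega e^{c(x-\sigma_1)}|R|^2e^{\ii\Gamma_1}$. Applying the exact identity \eqref{asympQ} in the form $c\kappa e^{c(x-\sigma_1)}=Q_c(x-\sigma_1)\bigl(1+e^{2c(x-\sigma_1)}\bigr)$ splits this into $-\omega|R|^2P-\omega e^{2c(x-\sigma_1)}|R|^2P$; the first piece cancels the remaining coupling $\omega|R|^2P$, leaving exactly the last term $-\omega e^{2c(x-\sigma_1)}|R|^2P$ of $F$, while the surviving modulation terms assemble into $-\vec m_\varphi\cdot\vec\md_\varphi$. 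The computation is elementary throughout; the only delicate point---and the reason $A$ is required to solve \eqref{eq:A} with right-hand side $c\kappa\omega e^{cx}Q^2$---is this last cancellation, where the exact (rather than merely asymptotic) relation \eqref{asympQ} is needed so that the leading coupling interaction $\omega|R|^2P$ between the two waves is removed and only the higher-order remainder survives in $F$.
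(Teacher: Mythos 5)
Your proposal is correct and takes essentially the same route as the paper's proof: decompose $\EE_U$ into the $P$-equation (using the equation of $Q_c$), the $\varphi$-equation plus coupling (using \eqref{eq:A} to eliminate $A''$, so that $-\omega|R|^2\varphi$ cancels the coupling on $\varphi$), and the phase-coherent cubic cross-terms, then invoke the exact identity \eqref{asympQ} to cancel $\omega|R|^2P$ and leave the remainder $-\omega e^{2c(x-\sigma_1)}|R|^2P$. Your write-up in fact makes explicit the cancellations that the paper's terse two-line computation leaves implicit, including the correct identification $e^{-c\sigma}e^{c(x-\sigma_2)}=e^{c(x-\sigma_1)}$ and the origin of the $\ii c\dot\sigma$ entry in $\vec{m}_\varphi$.
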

\begin{proof}
Using $Q_c''-c^2 Q_c=Q_c^3$ and \eqref{eq:A}, we compute
\begin{align*}
&\ii \partial_t P + \partial_x^2 P+|P|^2P=
-\vec{m}_1\cdot\vec\md_1,\\
&
\ii \partial_t \varphi + \partial_x^2 \varphi +\omega |R|^2 (P+\varphi)
=-\vec{m}_\varphi\cdot\vec\md_\varphi+\omega |R|^2 \left[Q_c(x-\sigma_1)-c\kappa e^{c(x-\sigma_1)}\right]e^{\ii\Gamma_1}.
\end{align*}
Using \eqref{asympQ}, we obtain \eqref{on:UV} for $\EE_U$ with $F$ defined as in \eqref{def:FG}.

Similarly, the equation
\[
\ii \partial_t R + \partial_x^2 R+|R|^2R=-\vec{m}_2\cdot\vec\md_2
\]
implies \eqref{on:UV} for $\EE_V$ with $G$ defined as in \eqref{def:FG}.
\end{proof}

\subsection{Projection of the error terms}
The soliton dynamics is expected to be determined by the following projections
\[
a= \frac 1{2c}{\la F,\partial_1 P\ra} \quad\mbox{and}\quad
b= \frac 12 {\la G,\partial_2 R\ra}.\]
Using $\la \partial_1 P,x_1P\ra=\la Q_c', xQ_c\ra=-\frac 12 \|Q_c\|_{L^2}^2=-2c$ and $\la \partial_2 R,x_2R\ra=-\frac 12 \|Q\|_{L^2}^2=-2$,
we decompose $F$ and $G$ as follows
\begin{equation}\label{onFFGG}\left\{\begin{aligned}
F&=F^\perp - a x_1 P ,\quad \la F^\perp, \partial_1P \ra= 0 \\
G&=G^\perp - b x_2 R,\quad \la G^\perp,\partial_2 R\ra = 0
\end{aligned}\right.\end{equation}
so that \eqref{on:UV} rewrites
\begin{equation}\label{new:EF}\left\{\begin{aligned}
\EE_{U}&= F^\perp-\vec{m}_1^a \cdot \vec\md_1-\vec{m}_\varphi\cdot\vec\md_\varphi\\
\EE_{V}&= G^\perp-\vec{m}_2^b \cdot \vec\md_2
\end{aligned}\right.\end{equation}
with
\begin{equation*}
\vec{m}_1^a =
\begin{pmatrix} \dot{\sigma}_1-2\beta_1 \\ \dot\gamma_1 +\dot\beta_1 \sigma_1 +\beta_1^2\\ \dot{\beta}_1 +a \end{pmatrix}
\quad\mbox{and}\quad
\quad 
\vec{m}_2^b =
\begin{pmatrix} \dot{\sigma}_2-2\beta_2 \\\dot\gamma_2 +\dot\beta_2 \sigma_2 +\beta_2^2\\ \dot{\beta}_2 +b\end{pmatrix}.
\end{equation*}
We compute the main order of these projections.
\begin{lemma}
Let $1<\theta< \min \big\{\frac 1c ; 2 \big\}$.
It holds
\begin{equation}\label{est:ab}
a= \alpha_c e^{-2c\sigma} +O(e^{-2 c\theta \sigma}),\quad
b= - c\alpha_c e^{-2c\sigma} +O(e^{-2 c\theta \sigma})
\end{equation}
where
\begin{equation*}
\alpha_c = 4c^2 \omega \|e^{cx} Q\|_{L^2}^2+ \frac {1}{2} \la \Lc A,A\ra>0.
\end{equation*}
\end{lemma}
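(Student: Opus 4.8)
The plan is to compute the two projections $a=\frac{1}{2c}\la F,\partial_1 P\ra$ and $b=\frac12\la G,\partial_2 R\ra$ directly from the explicit formulas \eqref{def:FG}, extracting the leading exponential order $e^{-2c\sigma}$ and controlling the remainder by $e^{-2c\theta\sigma}$. The key mechanism is the same in both cases: $P$ is centered at $\sigma_1$ while $R$ (and the correction $A$) is centered at $\sigma_2$, so the overlap integrals are governed by the exponential tails of $Q_c$ and $Q$, weighted by the separation $\sigma=\sigma_1-\sigma_2$. I would first record the change of variables placing $\partial_1 P=Q_c'(x-\sigma_1)e^{\ii\Gamma_1}$ at the origin, so that $\la F,\partial_1 P\ra$ becomes an integral of $Q_c'$ against the translated pieces of $F$, and similarly for $G$.

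For $a$, I would split $F$ into its four summands from \eqref{def:FG}. The cubic self-interaction terms $3|P|^2\varphi$, $3|\varphi|^2 P$, $|\varphi|^2\varphi$ all carry the prefactor $e^{-c\sigma}$ coming from the definition of $\varphi=e^{-c\sigma}A(x-\sigma_2)e^{\ii\Gamma_1}$, and once paired with $\partial_1 P$ the remaining spatial overlap produces a further $e^{-c\sigma}$, yielding the $e^{-2c\sigma}$ scale; here I would use the decay bound \eqref{sur:A} for $A$ and the exponential asymptotics \eqref{asympQ} of $Q$. The coupling term $-\omega e^{2c(x-\sigma_1)}|R|^2 P$ is the source of the explicit coefficient: since $|R|^2=Q^2(x-\sigma_2)$ and $P$ is concentrated near $\sigma_1$, substituting the far-field expansion $Q(x-\sigma_2)\approx \kappa e^{x-\sigma_2}$ (from \eqref{asympQ}) and collecting the $e^{-2c\sigma}$ factor should reproduce the $4c^2\omega\|e^{cx}Q\|_{L^2}^2$ contribution together with the $\frac12\la\Lc A,A\ra$ piece after integrating by parts and invoking \eqref{eq:A}. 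The phase factors $e^{\ii\Gamma_1}$ cancel against $\partial_1 P$ since both pieces carry the same phase, so the integrals are real to leading order. For $b$, the computation is simpler: $G=\omega|P+\varphi|^2 R$, and the leading overlap of $|P+\varphi|^2$ (concentrated near $\sigma_1$) with $\partial_2 R$ (near $\sigma_2$) again produces $e^{-2c\sigma}$; the relative factor $-c$ between $a$ and $b$ reflects the scaling of $Q_c$ versus $Q$ in the respective tail expansions and should emerge from the ratio of the normalizations $\frac{1}{2c}$ versus $\frac12$ combined with the matching of the interaction integrals.

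The positivity of $\alpha_c$ is the one genuinely structural point rather than a bookkeeping exercise: the first term $4c^2\omega\|e^{cx}Q\|_{L^2}^2$ is manifestly positive, but the sign of $\frac12\la\Lc A,A\ra$ is not obvious a priori. I would control it by testing \eqref{eq:A} against $A$ to write $\la\Lc A,A\ra=c\kappa\omega\la e^{cx}Q^2,A\ra$, and then analyze the sign of this pairing using the explicit source term and the decay \eqref{sur:A}; since $A$ solves an elliptic equation with a positive right-hand side and $\Lc$ is coercive by Lemma~\ref{lem:0}(i), the quadratic form $\la\Lc A,A\ra$ is nonnegative, which secures $\alpha_c>0$.

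The main obstacle I anticipate is the careful bookkeeping in the coupling term $-\omega e^{2c(x-\sigma_1)}|R|^2 P$, where one must justify replacing $Q(x-\sigma_2)$ by its leading exponential tail $\kappa e^{x-\sigma_2}$ uniformly enough that the error is genuinely of order $e^{-2c\theta\sigma}$ for the stated range $1<\theta<\min\{1/c;2\}$. The restriction $\theta<1/c$ is exactly what guarantees $2c\theta<2$, controlling the next-order term $e^{2x}Q(x)$ from \eqref{asympQ}, while $\theta<2$ controls the higher cubic self-interactions; tracking which correction dominates the remainder, and confirming that the cross terms between $A$'s tail and $Q$'s tail do not degrade the exponent, is where the estimate is most delicate.
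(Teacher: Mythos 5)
Your overall plan (direct computation of the overlap integrals, tail expansions via \eqref{asympQ}, the equation \eqref{eq:A} for $A$, coercivity of $\Lc$ for the sign) is the right kind of strategy, and your positivity argument is essentially the paper's: $\la \Lc A,A\ra>0$ follows from coercivity of $\Lc$ alone (the sign of the source term in \eqref{eq:A} is irrelevant). But there is a genuine error in where you locate the leading-order contribution to $a$. You claim the coupling term $-\omega e^{2c(x-\sigma_1)}|R|^2P$ is ``the source of the explicit coefficient''. It is not: pairing it with $\partial_1 P$ gives, after translation, $-\omega\int e^{2cx}Q_c Q_c'\, Q^2(x+\sigma)\,dx$, and since $e^{2cx}Q_c(x)|Q_c'(x)|\lesssim e^{2c\theta x}$ pointwise (this uses $\theta<2$) while $\int e^{2c\theta x}Q^2(x+\sigma)\,dx=e^{-2c\theta\sigma}\int e^{2c\theta x}Q^2<\infty$ (this uses $c\theta<1$), this term is $O(e^{-2c\theta\sigma})$, hence negligible relative to $e^{-2c\sigma}$ because $\theta>1$; in the paper this is exactly estimate \eqref{on:QQc2}. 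The entire coefficient $\alpha_c$ --- both $4c^2\omega\|e^{cx}Q\|_{L^2}^2$ and $\frac12\la\Lc A,A\ra$ --- comes instead from the single term $3|P|^2\varphi$, via the following chain: differentiate the soliton equation to get $-(Q_c')''+c^2Q_c'=3Q_c^2Q_c'$, integrate by parts so this operator falls on $A$, replace $-A''+c^2A$ by $\omega Q^2A+c\kappa\omega e^{cx}Q^2$ using \eqref{eq:A}, and only then expand the tail $Q_c'(x-\sigma)=c^2\kappa e^{-c\sigma}e^{cx}+\dots$; finally the identity $c\kappa\omega\int e^{cx}Q^2A=\la\Lc A,A\ra$ converts one of the two resulting integrals into the quadratic-form term. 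Your proposed mechanism produces neither piece of $\alpha_c$, so the computation of $a$ as outlined would fail.

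Your treatment of $b$ is also off. It is not ``simpler'', and the factor $-c$ does not come from comparing the normalizations $\frac{1}{2c}$ and $\frac12$: that ratio alone would give $+c$, with no sign change and no reason for the coefficients to match. In $\la G,\partial_2 R\ra$ all three pieces $\omega|P|^2R$, $2\omega\Re(P\bar\varphi)R$ and $\omega|\varphi|^2R$ contribute at order $e^{-2c\sigma}$: the first gives $-c^3\kappa^2\omega e^{-2c\sigma}\|e^{cx}Q\|_{L^2}^2$ after the tail expansion \eqref{on:QQc} and the integration by parts $\int e^{2cx}QQ'=-c\int e^{2cx}Q^2$ (which is where the minus sign is born), while the two $A$-dependent terms must be combined using $\la\Lc A,A'\ra$ evaluated in two different ways --- once from the definition of $\Lc$, giving $\omega\int A^2QQ'$, and once from \eqref{eq:A} plus integration by parts, giving $-c\la\Lc A,A\ra-2c\kappa\omega\int e^{cx}QQ'A$ --- so that their sum collapses to exactly $-c\,e^{-2c\sigma}\la\Lc A,A\ra$. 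This double evaluation of $\la\Lc A,A'\ra$ is the key structural trick for $b$, and it is absent from your proposal; without it, or some substitute for it, you cannot see that the coefficient in $b$ is exactly $-c\alpha_c$ rather than merely some $O(e^{-2c\sigma})$ quantity.
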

\begin{remark}\label{rk:regime}
The   expression of the positive constant $\alpha_c$, relevant in the dynamics of the $2$-soliton (see Section~\ref{formalc}),
suggests that even at the formal level, the introduction of the approximate solution $\big(\begin{smallmatrix}U\\ V\end{smallmatrix}\big)$
including the refined term $\varphi$ is necessary to determine correctly the non-symmetric logarithmic regime.
\end{remark}
\begin{proof}
We start by proving the following estimates
\begin{align}
&\int e^{2c(x-\sigma)}Q_c^2 (x-\sigma) Q^2(x) dx =O(e^{-2c\theta \sigma}),\label{on:QQc2}\\
&\int Q_c^2(x-\sigma) Q(x)Q'(x) dx = -c^3\kappa^2 e^{-2c\sigma} \int e^{2cx}Q^2(x)dx + O(e^{-2c\theta \sigma}).\label{on:QQc}
\end{align}
Proof of \eqref{on:QQc2}. By \eqref{asympQ} and the condition on $\theta$, we have
\[
e^{2c(x-\sigma)}Q_c^2 (x-\sigma) Q^2(x) \lesssim e^{2c\theta(x-\sigma)} Q^2(x)
\lesssim e^{-2c\theta \sigma} e^{-2(1-c\theta) |x|},
\]
and \eqref{on:QQc} follows.

Proof of \eqref{on:QQc}.
It follows from \eqref{asympQ} that
\[
Q_c^2(x) = c^2\kappa^2 e^{2cx} + O(e^{3cx} Q_c(x)),
\]
and so
\begin{equation*}
Q_c^2(x-\sigma) = c^2\kappa^2 e^{-2c\sigma} e^{2cx} + O(e^{-2c\theta \sigma}e^{2c\theta x}).
\end{equation*}
Thus
\[
\int Q_c^2(x-\sigma) Q(x)Q'(x) dx =
c^2\kappa^2 e^{-2c\sigma} \int e^{2cx}Q(x)Q'(x)dx + O(e^{-2c\theta \sigma}).
\]
and \eqref{on:QQc} follows by integration by parts.

\smallskip

From the expression of $F$ in \eqref{def:FG}, we have
\begin{align*}
\la F,\partial_1 P\ra 
& =3e^{-c\sigma} \int Q_c^2(x)Q_c'(x) A(x+\sigma) dx 
+3e^{-2c\sigma} \int Q_c(x)Q_c'(x) A^2(x+\sigma)dx\\
& \quad +e^{-3c\sigma} \int Q_c'(x)A^3(x+\sigma) dx
- \omega \int e^{2cx}Q_c (x)Q_c'(x)Q^2(x+\sigma) dx.
\end{align*}
For the first term, using $-(Q_c')''+c^2 Q_c'=3Q_c^2Q_c'$ (obtained by differentiating the equation of $Q_c$) and the equation $A$ in \eqref{eq:A}, we compute
\begin{align*}
3\int Q_c^2(x)Q_c'(x) A(x+\sigma) dx
&= \int Q_c'(x-\sigma) (-A''(x)+c^2 A(x)) dx\\
&=\omega \int Q_c'(x-\sigma) \left[ Q^2(x) A(x)+c\kappa e^{cx} Q^2(x)\right] dx.
\end{align*}
Similarly as in the proof of \eqref{on:QQc}, using \eqref{asympQ} we observe
\begin{align*}\int Q_c'(x-\sigma) Q^2(x) A(x) dx& = 
 c^2 \kappa e^{-c\sigma} \int e^{cx} Q^2(x) A(x) dx + O(e^{-c\theta \sigma}), \\
 \int Q_c'(x-\sigma) e^{cx} Q^2(x)dx&=c^2\kappa e^{-c\sigma} \int e^{2cx} Q^2(x)dx+ O(e^{-c\theta \sigma}).
\end{align*}
Moreover, it follows from \eqref{eq:A} and the coercivity of the operator $\Lc$ that
\[
 c \kappa\omega \int e^{cx} Q^2(x) A(x) dx
 =\la \Lc A,A\ra >0.
\]
Last, we check using the decay property of $A$ in \eqref{sur:A} and the condition on $\theta$ that
\begin{equation*}
\int Q_c(x)Q_c'(x) A^2(x+\sigma)dx =O(e^{-c \theta\sigma}),\quad
\int Q_c'(x)A^3(x+\sigma) dx =O(e^{-c \sigma}).
\end{equation*}
Using also \eqref{on:QQc2} and $\kappa^2=8$, we find
\begin{equation*}
a=\frac {e^{-2c\sigma}}{2} \left[ c^2\kappa^2 \omega \int e^{2cx} Q^2(x)dx+ \la \Lc A,A\ra\right]
+O(e^{-2c\theta \sigma})=\alpha_c e^{-2c\sigma} +O(e^{-2 c\theta \sigma}).
\end{equation*}

From the definition of $G$, we have
\begin{align*}
\la G,\partial_2 R\ra
&=\omega \int Q_c^2(x-\sigma) Q(x)Q'(x)dx
\\&\quad+2\omega e^{-c\sigma} \int Q_c(x-\sigma)A(x) Q(x)Q'(x) dx+\omega e^{-2c\sigma} \int A^2(x)Q(x)Q'(x)dx.
\end{align*}
On the one hand, integrating by parts, it holds
\[
\la \Lc A,A'\ra = -\omega \int Q^2(x)A(x)A'(x) dx
=\omega \int A^2(x)Q(x)Q'(x)dx.
\]
On the other hand, using \eqref{eq:A} and then integration by parts , it holds
\begin{align*}
\la \Lc A,A'\ra
&= c \kappa\omega\int e^{cx} Q^2(x) A'(x) dx\\
&=-c^2 \kappa\omega \int e^{cx} Q^2(x) A(x) dx 
-2c \kappa\omega \int e^{cx} Q(x)Q'(x) A(x) dx
\\
&=-c \la \Lc A,A\ra
-2c \kappa\omega \int e^{cx} Q(x)Q'(x) A(x) dx.
\end{align*}
Thus,
also using 
\[
\int Q_c(x-\sigma)A(x) Q(x)Q'(x) dx = c\kappa e^{-c\sigma} \int e^{cx} Q(x)Q'(x)A(x) Q dx+O(e^{-c\theta\sigma})
\]
and \eqref{on:QQc}, we obtain
$b=-c \alpha_c e^{-2c\sigma} +O(e^{-2 c\theta \sigma})$.
\end{proof}
\subsection{Formal discussion}\label{formalc}
Formally, the previous computations lead us to the system
\[
\dot\sigma_1=2\beta_1,\quad \dot\beta_1=- \alpha_c e^{-2c\sigma},\quad
\dot\sigma_2=2\beta_2,\quad \dot\beta_2= c\alpha_c e^{-2c\sigma}.
\]
Recalling $\sigma=\sigma_1-\sigma_2$ and $\beta=\beta_1-\beta_2$, this gives
\[
\ddot \sigma = - 2(c+1) \alpha_c e^{-2c\sigma},\quad 2 \beta= {\dot\sigma} ,
\]
which admits the following solution
\[
\sigma(t)=\frac 1c\log (\Omega_c t),\quad 
2\beta(t)=\frac 1{ct}=\frac{\Omega_c}c e^{-c\sigma} \quad \mbox{where}\quad
\Omega_c = [ 2c(c+1) \alpha_c ]^{\frac 12}>0.\]
This justifies the existence of the regime \eqref{dnsym} of Theorem~\ref{th:2}.
In particular, observe that the positive sign of the constant~$\alpha_c$ is responsible for the emergence of the special non-symmetric logarithmic
regime.
The phase parameters $\gamma_1$ and $\gamma_2$ are not essential for the dynamics and so we do not discuss them here.
\subsection{Decomposition around the approximate solution}
Let $\tin \gg1$ to be fixed later and consider a solution $\big(\begin{smallmatrix}u\\ v\end{smallmatrix}\big)$
of \eqref{snls} under the form
\begin{equation}\label{decomposition}
\begin{pmatrix} u\\ v\end{pmatrix}
=\begin{pmatrix} U\\ V\end{pmatrix}+\begin{pmatrix} \varepsilon\\ \eta\end{pmatrix}
\quad \mbox{with}\quad 
\begin{pmatrix} \varepsilon \\ \eta\end{pmatrix}(\tin)=\begin{pmatrix}0\\0\end{pmatrix}.
\end{equation}
Then, using the notation
\[
h(u,v)=\left(|u|^2+\omega|v|^2\right) u
\]
the function $\big(\begin{smallmatrix}\varepsilon\\ \eta\end{smallmatrix}\big)$ satisfies the system
\begin{equation}\label{eq:ee}
\left\{\begin{aligned}
&\ii \partial_t\varepsilon+\partial_x^2 \varepsilon
+h(U+\varepsilon,V+\eta)-h(U,V)+\EE_U=0\\
&\ii \partial_t\eta+\partial_x^2 \eta
+h(V+\eta,U+\varepsilon)-h(V,U)+\EE_V=0
\end{aligned}\right.
\end{equation}
The parameters $\sigma_1$, $\sigma_2$, $\gamma_1$, $\gamma_2$, $\beta_1$ and $\beta_2$ in the definition of $\big(\begin{smallmatrix}U\\ V\end{smallmatrix}\big)$ are fixed by imposing the following orthogonality conditions
\begin{equation}\label{ortho}\left\{\begin{aligned}
&\la \varepsilon, x_1 P\ra =\la \varepsilon, \ii \Lambda_1 P\ra =\la \varepsilon,\ii \partial_1 P\ra=0\\
&\la \eta , x_2 R\ra =\la \eta, \ii \Lambda_2 R \ra = \la \eta,\ii \partial_2 R\ra=0
\end{aligned}\right.\end{equation}
and initial conditions
\begin{equation}\label{initial}\left\{\begin{aligned}
&\sigma_1(\tin)=\frac{\sigma_\infty}{c+1} ,\quad \sigma_2(\tin)=-\frac{c\sigma_\infty}{c+1},\\
&\beta_1(\tin)=\frac{\beta_\infty}{c+1},\quad \beta_2(\tin)=-\frac{c\beta_\infty}{c+1},\\
&\gamma_1(\tin)=0,\quad \gamma_2(\tin)=0,
\end{aligned}\right.\end{equation}
where $\sigma_\infty$ is to be chosen later close to $\frac 1c\log (\Omega_c \tin)$ (see below \eqref{eq:siginf}) and 
\begin{equation}\label{eq:betainf}
\beta_\infty= \frac{\Omega_c}{2c} e^{-c\sigma_\infty}.
\end{equation}
Indeed, by a standard argument and the initial conditions (including $\varepsilon(\tin)=\eta(\tin)=0$), the orthogonality conditions are equivalent to a first order differential system in the parameters $(\sigma_1,\sigma_2,\gamma_1,\gamma_2,\beta_1,\beta_2)$, which admits a unique local solution in the regime considered in this paper.
See \emph{e.g.} Lemma~2.7 in ~\cite{CM} for a detailled argument in the case of the (gKdV) equation, and Lemma~\ref{le:mod} in the present paper for the corresponding estimates on the time derivatives of the parameters.
For technical reasons, one can fix zero initial conditions on $\gamma_1$, $\gamma_2$ as
in \eqref{initial}, but the initial conditions on $\sigma_1$, $\sigma_2$, $\beta_1$ and $\beta_2$ have to depend on a parameter
$\sigma_\infty$ to be fixed later by a topological argument.

As in \cite{MRlog,NV1,RaSz11}, the orthogonality conditions in \eqref{ortho} are related to \eqref{e:s}.
Using the conservation of masses and $L^2$ sub-criticality, we avoid the modulation of the scaling parameters of the solitons
(see \cite{We86} and the proof of Lemma~\ref{le:mod}).

\section{Proof of Theorem~\ref{th:2}}\label{sec:th:2}
\subsection{Bootstrap bounds}
Fix $\ths$, $\thc$ and $\tht$ such that 
$1<\tht<\thc<\ths<\min\big\{\frac 1c;2\big\}$.
Following Section~\ref{formalc}, we work under the following bootstrap estimates, for $1\ll t\leq \tin$,
\begin{equation}\label{BS}\left\{\begin{aligned}
&\|\varepsilon\|_{H^1}+\|\eta\|_{H^1}\leq t^{-\ths},\\
& \bigg|\beta-\frac 1{2ct}\bigg|+\bigg|\beta_1-\frac1{2c(c+1)t}\bigg|+\bigg|\beta_2+\frac{1}{2(c+1)t}\bigg|\leq t^{-\tht},\\
& \bigg|\sigma_1-\frac{\log(\Omega_c t)}{c(c+1)}\bigg|+\bigg|\sigma_2+\frac{\log(\Omega_c t)}{c+1}\bigg|\leq t^{1-\tht},\\
& \bigg| \frac{e^{c\sigma}}{\Omega_c t}-1 \bigg|\leq t^{1-\thc}.
\end{aligned}\right.\end{equation}
For consistency, the free parameter $\sigma_\infty$ in \eqref{initial} will have to be chosen such that
\begin{equation}\label{eq:siginf}
\bigg| \frac{e^{c\sigma_\infty}}{\Omega_c \tin}-1 \bigg|\leq \tin^{1-\thc}.
\end{equation}
\begin{lemma}\label{tech}
Let $0<c_1\leq c_2$ and $q\geq 0$. It holds, for $\sigma> 1$,
\[
\int (1+|x-\sigma|)^q e^{-c_1|x-\sigma|}e^{-c_2|x|} dx\lesssim
\begin{cases}
\sigma^{q+1} e^{-c_1 \sigma} & \mbox{if $c_1=c_2$}\\
\sigma^q e^{-c_1 \sigma} & \mbox{if $c_1\neq c_2$}.
\end{cases}
\]
\end{lemma}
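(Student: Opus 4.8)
The plan is to split the integration domain at the two natural breakpoints $x=0$ and $x=\sigma$ (recall $\sigma>1$) and to estimate the three resulting pieces separately. On each piece the absolute values $|x|$ and $|x-\sigma|$ become affine in $x$, so the integrand reduces to a polynomial times a single exponential; the dichotomy $c_1=c_2$ versus $c_1\neq c_2$ will turn out to be localized entirely in the middle region $0\le x\le\sigma$.

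First I would treat the two tails. On $\{x<0\}$ one has $|x|=-x$ and $|x-\sigma|=\sigma-x$, so after the substitution $s=-x$ the integrand equals $e^{-c_1\sigma}(1+\sigma+s)^q e^{-(c_1+c_2)s}$; using the elementary bound $(1+\sigma+s)^q\lesssim \sigma^q+s^q$ and integrating the decaying exponential in $s$ gives a contribution $\lesssim \sigma^q e^{-c_1\sigma}$. On $\{x>\sigma\}$ one has $|x|=x$ and $|x-\sigma|=x-\sigma$, so the substitution $s=x-\sigma$ turns the integrand into $e^{-c_2\sigma}(1+s)^q e^{-(c_1+c_2)s}$, whose integral over $s>0$ is $\lesssim e^{-c_2\sigma}\le e^{-c_1\sigma}$ since $c_1\le c_2$. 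Both tails are therefore bounded by $\sigma^q e^{-c_1\sigma}$, consistently with both cases of the claimed estimate.

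The main region is $\{0\le x\le\sigma\}$, where $|x|=x$, $|x-\sigma|=\sigma-x$, and the integrand becomes $e^{-c_1\sigma}(1+\sigma-x)^q e^{-(c_2-c_1)x}$. This is exactly where the two cases separate. If $c_1=c_2$ the exponential disappears and I am left with $e^{-c_1\sigma}\int_0^\sigma(1+\sigma-x)^q\,dx$; the change of variable $u=\sigma-x$ turns this into $e^{-c_1\sigma}\int_0^\sigma (1+u)^q\,du\lesssim \sigma^{q+1}e^{-c_1\sigma}$, which produces the extra power of $\sigma$. If instead $c_1<c_2$, then $c_2-c_1>0$ and the surviving factor $e^{-(c_2-c_1)x}$ concentrates the mass near $x=0$; bounding $(1+\sigma-x)^q\le(1+\sigma)^q$ and integrating the exponential over $x\in(0,\infty)$ yields $\lesssim (1+\sigma)^q e^{-c_1\sigma}\lesssim \sigma^q e^{-c_1\sigma}$. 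Summing the three contributions gives the stated bound in each case.

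There is no genuine obstacle here; the only point requiring a little care is recognizing that the whole difference between the two regimes originates in the overlap region $0\le x\le\sigma$: when the decay rates coincide, the cancellation of the exponentials leaves a true polynomial integral of length $\sigma$, whereas a strict inequality $c_1<c_2$ restores exponential localization near the origin and suppresses the extra factor of $\sigma$. Throughout, the implicit constants are allowed to depend on the fixed parameters $c_1$, $c_2$ and $q$.
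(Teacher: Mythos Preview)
Your proof is correct and follows essentially the same approach as the paper: both split the integral at $x=0$ and $x=\sigma$, reduce each piece to a polynomial times an exponential, and observe that the extra power of $\sigma$ arises only from the middle interval when $c_1=c_2$. Your argument is simply more detailed than the paper's, which records the three-piece decomposition and then says the result follows by integration.
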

\begin{proof}
We decompose
\begin{multline*}
 \int (1+|x-\sigma|)^q e^{-c_1|x-\sigma|}e^{-c_2|x|} dx
=e^{-c_1\sigma} \int_{-\infty}^{0} (1+|x-\sigma|)^q e^{(c_1+c_2)x}dx \\
 + e^{-c_1\sigma} \int_0^{\sigma} (1+|x-\sigma|)^q e^{-(c_2-c_1)x}dx
+e^{c_1\sigma} \int_{\sigma}^{+\infty} (1+|x-\sigma|)^q e^{-(c_1+c_2)x}dx.
\end{multline*}
The result follows by integration.
\end{proof}

\begin{lemma}
The following hold
\begin{equation}\label{dtP}\begin{aligned}
\|\partial_t P-\ii c^2P\|_{L^2}
&\lesssim \left(|\dot \gamma_1|+|\dot\beta_1||\sigma_1|+|\dot \sigma_1|\right),
\\
\|\partial_t\varphi-\ii c^2\varphi\|_{L^2}
&\lesssim \left(|\dot \gamma_1|+|\dot\beta_1||\sigma_2|+|\dot \sigma_2|+|\dot \sigma|\right)e^{-c\sigma},\\
\|\partial_t R-\ii R\|_{L^2}
&\lesssim \left(|\dot \gamma_2|+|\dot\beta_2||\sigma_2|+|\dot \sigma_2|\right).
\end{aligned}\end{equation}
Let $1<\theta< \min \big\{\frac 1c ; 2 \big\}$. The following hold
\begin{align}
&\|F\|_{L^2}+\|F^\perp\|_{L^2}\lesssim e^{-2c\sigma},\label{F}\\
&\|\partial_t F - \ii c^2 F\|_{L^2}+\|\partial_t F^\perp - \ii c^2 F^\perp\|_{L^2}\lesssim
\left(|\dot \gamma_1|+|\dot\beta_1||\sigma_1|+|\dot\sigma_1|+|\dot\sigma|\right)e^{-2c\sigma},\label{dtF}\\
&\|G\|_{L^2}+\|G^\perp\|_{L^2}\lesssim e^{-c\theta\sigma},\label{G}\\
&\|\partial_t G - \ii G\|_{L^2}+\|\partial_t G^\perp - \ii G^\perp\|_{L^2}\lesssim
\left(|\dot \gamma_2|+|\dot\beta_2||\sigma_2|+|\dot\sigma_2|+|\dot\sigma|\right)e^{-c\theta\sigma}\label{dtG}.
\end{align}
\end{lemma}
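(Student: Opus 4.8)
The plan is to establish the five estimates in order of increasing difficulty, starting from the profile bounds \eqref{dtP}. These follow by direct differentiation of the explicit expressions: writing $\dot\Gamma_1=c^2+\dot\gamma_1+\dot\beta_1 x$ and $x=(x-\sigma_1)+\sigma_1$ one finds
\[
\partial_t P-\ii c^2 P=-\dot\sigma_1\,\partial_1 P+\ii\dot\gamma_1\,P+\ii\dot\beta_1\,x_1 P+\ii\dot\beta_1\sigma_1\,P,
\]
and the bound is read off from the boundedness of $\|\partial_1 P\|_{L^2}=\|Q_c'\|_{L^2}$, $\|P\|_{L^2}=\|Q_c\|_{L^2}$, $\|x_1 P\|_{L^2}=\|xQ_c\|_{L^2}$, the contribution $|\dot\beta_1|\|x_1P\|_{L^2}$ being absorbed into $|\dot\beta_1||\sigma_1|$ since $|\sigma_1|\gtrsim1$. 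The computations for $R$ and $\varphi$ are identical; for $\varphi$ the factor $e^{-c\sigma}$ produces the global prefactor $e^{-c\sigma}$ and $\partial_t(e^{-c\sigma})$ supplies the extra $|\dot\sigma|$ term, while the decay \eqref{sur:A} of $A$ furnishes the bounded norms.

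The key remark for the remaining estimates is that, since $A$, $Q$, $Q_c$ are real, every summand in \eqref{def:FG} carries exactly the phase of $P$ (resp. of $R$), so that $F=f\,e^{\ii\Gamma_1}$ and $G=g\,e^{\ii\Gamma_2}$ with $f,g$ real-valued. For \eqref{F} and \eqref{G} I would bound $\|f\|_{L^2}$ and $\|g\|_{L^2}$ term by term, using $|A|\lesssim Q_c$ from \eqref{sur:A}, the pointwise decay of $Q,Q_c$, and Lemma~\ref{tech} to convert the separation $\sigma$ into exponential gains. The leading term of $F$ is $3|P|^2\varphi$, of $L^2$ norm exactly of order $e^{-2c\sigma}$, while $3|\varphi|^2P$, $|\varphi|^2\varphi$ and the correction $\omega e^{2c(x-\sigma_1)}|R|^2P$ are all $O(e^{-2c\sigma})$ (in fact smaller), giving $\|F\|_{L^2}\lesssim e^{-2c\sigma}$. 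For $G$ the leading term $\omega|P|^2R$ has $L^2$ norm of order $e^{-\min\{1,2c\}\sigma}$, whence $\|G\|_{L^2}\lesssim e^{-c\theta\sigma}$ since $c\theta<\min\{1,2c\}$, the $\varphi$-cross and quadratic terms being smaller. The bounds on $F^\perp$, $G^\perp$ then follow from \eqref{onFFGG}, from $|a|+|b|\lesssim e^{-2c\sigma}$ in \eqref{est:ab}, and from the boundedness of $\|x_1 P\|_{L^2}$, $\|x_2 R\|_{L^2}$.

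For \eqref{dtF} and \eqref{dtG} I would use the factorization once more. Since $\partial_t e^{\ii\Gamma_1}=\ii(c^2+\dot\gamma_1+\dot\beta_1 x)e^{\ii\Gamma_1}$, the $c^2$-part cancels against $\ii c^2F$, leaving
\[
\partial_t F-\ii c^2 F=(\partial_t f)\,e^{\ii\Gamma_1}+\ii(\dot\gamma_1+\dot\beta_1 x)\,f\,e^{\ii\Gamma_1},
\]
and symmetrically for $G$ with $\Gamma_2$ and $\ii$. The second term is controlled by $(|\dot\gamma_1|+|\dot\beta_1||\sigma_1|)\|f\|_{L^2}+|\dot\beta_1|\|(x-\sigma_1)f\|_{L^2}$, the weighted norm keeping the same exponential order. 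Differentiating $f$ produces only $\dot\sigma$ (from $e^{-c\sigma}$ and $e^{2c(x-\sigma_1)}$), $\dot\sigma_1$ and $\dot\sigma_2$; eliminating $\dot\sigma_2=\dot\sigma_1-\dot\sigma$ yields precisely the combination of \eqref{dtF}, and symmetrically $\dot\sigma_1=\dot\sigma_2+\dot\sigma$ yields that of \eqref{dtG}. For the $F^\perp$, $G^\perp$ statements I would differentiate $a\,x_1P$ and $b\,x_2R$; the terms carrying $\ii c^2$ (resp. $\ii$) that arise in $\dot a$, $\dot b$ cancel through the antisymmetry $\la\ii f,g\ra=-\la f,\ii g\ra$, so that $|\dot a|+|\dot b|\lesssim(\cdots)e^{-2c\sigma}$, the remaining profile-derivative contributions being bounded exactly as in \eqref{dtP}.

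The main obstacle is the spatial-decay bookkeeping. Two points require genuine care. First, the correction $\omega e^{2c(x-\sigma_1)}|R|^2P$ contains the growing weight $e^{2c(x-\sigma_1)}$, and one must verify by splitting the integral as in the proof of Lemma~\ref{tech} that the extra decay from $Q_c(x-\sigma_1)$ still leaves an $L^2$ norm $\lesssim e^{-2c\sigma}$. Second, the decay rate of $G$ is $\min\{1,2c\}$, which degenerates at the borderline $c=\tfrac12$ and produces a factor $\sigma$. It is precisely to absorb such polynomial-in-$\sigma$ factors, together with those generated by the weights $(x-\sigma_1)$, $(x-\sigma_2)$ in the time-derivative estimates, that one works with the strict exponent $c\theta<\min\{1,2c\}$ rather than the sharp rate, which is what makes \eqref{G} and \eqref{dtG} clean.
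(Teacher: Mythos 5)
Your proposal is correct and follows essentially the same route as the paper: factor out the phase to write $F=F_1(t,x-\sigma_1)e^{\ii\Gamma_1}$, $G=G_2(t,x-\sigma_2)e^{\ii\Gamma_2}$ with real profiles, bound each term via $|A|\lesssim Q_c$ and Lemma~\ref{tech} (with the strict inequality $c\theta<\min\{1,2c\}$ absorbing the polynomial factors), and handle $F^\perp$, $G^\perp$ through $\dot a$, $\dot b$ using exactly the antisymmetry cancellation of the $\ii c^2$ (resp. $\ii$) terms that the paper exploits. The only additions are points the paper leaves implicit (e.g.\ $|\sigma_1|\gtrsim 1$ to absorb $|\dot\beta_1|\|x_1P\|_{L^2}$), and they are valid under the bootstrap regime \eqref{BS}.
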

\begin{proof} 
Estimates \eqref{dtP} are simple consequences of the definitions of $P$, $\varphi$ and $R$.

Proof of \eqref{F}.
Recall that $F(t,x)=F_1(t,x-\sigma_1(t))e^{\ii \Gamma_1(t,x)}$, where
\[
F_1=3e^{-c\sigma}Q_c^2A(x+\sigma)+3e^{-2c\sigma}Q_cA^2(x+\sigma)+e^{-3c\sigma}A^3(x+\sigma)-\omega e^{2cx}Q^2(x+\sigma)Q_c.
\]
Moreover, from \eqref{sur:A} and Lemma~\ref{tech}, it holds
\[
\|Q_c^2A(x+\sigma)\|_{L^2}+
\|Q_cA^2(x+\sigma)\|_{L^2}\lesssim e^{-c\sigma},
\]
and $\|e^{2cx}Q^2(x+\sigma)Q_c\|_{L^2}\lesssim e^{-2c\sigma}\|e^{2cx}Q^2\|_{L^2}\lesssim e^{-2c\sigma}$.

Proof of \eqref{dtF}. Note that 
\[
\partial_t F- \ii c^2 F= \ii (\dot\gamma_1+\dot \beta_1 \sigma_1) F+ \ii \dot \beta_1 (x-\sigma_1) F 
-\dot\sigma_1 \partial_x F_1 (t,x-\sigma_1)e^{\ii \Gamma_1}
+\partial_t F_1(t,x-\sigma_1)e^{\ii \Gamma_1}.
\]
We see from the expression of $F_1$ and similar estimates that the following hold
\begin{gather*}
\|(\dot\gamma_1+\dot\beta_1 \sigma_1) F_1\|_{L^2}\lesssim (|\dot\gamma_1|+|\dot \beta_1||\sigma_1| )e^{-2c\sigma},\quad
\| xF_1 \|_{L^2} \lesssim e^{-2c\sigma},
\\
\|\partial_x F_1\|_{L^2}\lesssim e^{-2c\sigma},\quad 
\|\partial_t F_1\|_{L^2} \lesssim |\dot \sigma|e^{-2c\sigma}.
\end{gather*}
This proves estimate~\eqref{dtF} for $F$.

Next, note that from the definition of $a$, we have
\[\dot a=\frac 1{2c} \la \partial_t F-\ii c^2 F,\partial_1 P\ra
+\frac 1{2c} \la F,\partial_t \partial_1 P -\ii c^2 \partial_1 P\ra.\]
Thus, from the analogue of \eqref{dtP} for $\partial_1 P$ and \eqref{F}-\eqref{dtF}, we deduce 
\[
|\dot a|\lesssim \left(|\dot \gamma_1|+|\dot\beta_1||\sigma_1|+|\dot\sigma_1|+|\dot\sigma|\right)e^{-2c\sigma}.
\]
Estimate \eqref{dtF} for $F^\perp$ then comes from
\[
\partial_t F^\perp-\ii c^2 F^\perp
= \partial_t F-\ii c^2 F +\dot ax_1 P + a \left[\partial_t (x_1P)-\ii c^2 (x_1P)\right]
\]
and the analogue of \eqref{dtP} for $x_1P$.

Proof of \eqref{G}. We rewrite
$G(t,x)=G_2(x-\sigma_2(t)) e^{\ii \Gamma_2(t,x)}$, where
\[
G_2=\omega Q_c^2(x-\sigma)Q+2\omega e^{-c\sigma} Q_c(x-\sigma)AQ + \omega e^{-2c\sigma}A^2Q.
\]
From Lemma~\ref{tech} and the definition of $\theta$, we have
\[
\|Q_c^2(x-\sigma)Q\|_{L^2}\lesssim e^{-c\theta\sigma},\quad
 \|Q_c(x-\sigma)AQ\|_{L^2}\lesssim e^{-c\sigma}.
\]

Proof of \eqref{dtG}. We have
\[
\partial_t G- \ii G= \ii (\dot\gamma_2+\dot \beta_2 \sigma_2) G+ \dot \beta_2 (x-\sigma_2) G
-\dot\sigma_2 \partial_x G_2 (t,x-\sigma_2)e^{\ii \Gamma_2}
+\partial_t G_2(t,x-\sigma_2)e^{\ii \Gamma_2}.
\]
As before, we use the following estimates to prove \eqref{dtG} for $G$
\begin{gather*}
\|(\dot\gamma_2+\dot\beta_2 \sigma_2) G_2\|_{L^2}\lesssim (|\dot\gamma_2|+|\dot \beta_2||\sigma_2| )e^{-c\theta\sigma},\quad
\| xG_2 \|_{L^2} \lesssim e^{-c\theta\sigma},
\\
\|\partial_x G_2\|_{L^2}\lesssim e^{-c\theta\sigma},\quad 
\|\partial_t G_2\|_{L^2} \lesssim |\dot \sigma|e^{-c\theta\sigma}.
\end{gather*}
The proof of \eqref{dtG} for $G^\perp$ follows from similar arguments and it is omitted.
\end{proof}

\subsection{Modulation equations}
\begin{lemma}\label{le:mod}
Let $\ths<\theta< \min \big\{\frac 1c ; 2 \big\}$.
It holds
\begin{align}
&|\la \varepsilon,P\ra|\lesssim t^{-2}\log t,\quad |\la \eta,R\ra|\lesssim t^{-2\ths}, \label{ePetaR}\\
&|\dot \sigma_1-2\beta_1|+|\dot \sigma_2-2\beta_2|+|\dot\gamma_1|+|\dot\gamma_2|\lesssim t^{-\theta}. \label{param}\\
&|\vec{m}_1|+|\vec{m}_2|+|\vec{m}_1^a|+|\vec{m}_2^b|\lesssim t^{-\theta},\quad
|\vec{m}_\varphi|\lesssim t^{-1} \label{m},\\
&|\dot \beta_1 +a|+|\dot \beta_2+b|\lesssim t^{-1-\ths}.\label{beta}
\end{align}
\end{lemma}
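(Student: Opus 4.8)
Lemma~\ref{le:mod} is a modulation lemma: its estimates follow by differentiating the three orthogonality conditions in \eqref{ortho} in time, substituting the equation \eqref{eq:ee} for $\big(\begin{smallmatrix}\varepsilon\\ \eta\end{smallmatrix}\big)$, and extracting the principal linear system for the parameter derivatives. I would organize the proof into the mass estimates \eqref{ePetaR} first (which are algebraic consequences of conservation laws and do not require the full modulation machinery), then the coupled linear system producing \eqref{param}, \eqref{m}, and finally the sharp estimate \eqref{beta}.

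\textbf{Step 1 (mass estimates \eqref{ePetaR}).} The bound on $\la\eta,R\ra$ comes from the conservation of the mass $M(v)=M(V+\eta)$. Expanding, $2\la\eta,R\ra = M(v)-M(R)-\|\eta\|_{L^2}^2$; since $M(v)$ is conserved and equals $M(R)=\|Q\|_{L^2}^2$ (constant) up to the contribution of $\eta(\tin)=0$, one gets $|\la\eta,R\ra|\lesssim\|\eta\|_{L^2}^2\lesssim t^{-2\ths}$ from the bootstrap \eqref{BS}. For $\la\varepsilon,P\ra$ the same idea applies to $M(u)=M(U+\varepsilon)=M(P+\varphi+\varepsilon)$; the extra subtlety is that $U=P+\varphi$ carries the correction $\varphi$ of size $e^{-c\sigma}\sim t^{-1}$, so cross terms $\la\varepsilon,\varphi\ra$ and $\la\varphi,P\ra$ enter. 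Using $\|\varphi\|_{L^2}\lesssim t^{-1}$ and the bootstrap bound on $\|\varepsilon\|_{L^2}$, together with the conservation of $M(u)$, one recovers $|\la\varepsilon,P\ra|\lesssim t^{-2}\log t$, the $\log t$ factor coming from the $|\sigma|\sim\log t$ growth in the $\la\varphi,P\ra$-type overlap integrals (estimated by Lemma~\ref{tech}).

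\textbf{Step 2 (the modulation system, \eqref{param}--\eqref{m}).} Differentiating each orthogonality condition, e.g. $\frac{d}{dt}\la\varepsilon,x_1P\ra=0$, and inserting $\ii\partial_t\varepsilon=-\partial_x^2\varepsilon-[h(U+\varepsilon,V+\eta)-h(U,V)]-\EE_U$ from \eqref{eq:ee}, produces terms of three kinds: the time derivatives $\partial_t(x_1P)$ etc.\ which by \eqref{dtP} bring in the modulation vector $\vec{m}_1^a$ acting through \eqref{new:EF}; the linear-in-$\varepsilon$ terms controlled by $\|\varepsilon\|_{H^1}\lesssim t^{-\ths}$; and the source terms $\la F^\perp,\cdot\ra$, $\la G^\perp,\cdot\ra$ bounded by \eqref{F}, \eqref{G}. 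The relations \eqref{e:s} are what make the scheme work: they guarantee that the leading linear contributions pair the chosen directions $(x_jP,\ii\Lambda_jP,\ii\partial_jP)$ with the parameter derivatives through an invertible (nearly diagonal) matrix, so the system can be solved for $\dot\sigma_j-2\beta_j$, $\dot\gamma_j$, and the components of $\vec m_1,\vec m_2$. The resulting bounds are dominated by $\max(\|\varepsilon\|_{H^1},\|F^\perp\|_{L^2},\|G^\perp\|_{L^2})\lesssim t^{-\theta}$ for any $\ths<\theta<\min\{1/c,2\}$, giving \eqref{param} and \eqref{m}; the sharper $|\vec m_\varphi|\lesssim t^{-1}$ reflects the $e^{-c\sigma}$ prefactor in $\vec m_\varphi$ together with $e^{-c\sigma}\sim t^{-1}$.

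\textbf{Step 3 (the sharp $\beta$-estimate \eqref{beta}).} This is the hardest step and the genuine point of the lemma. The crude modulation system gives $\dot\beta_1$ only up to $O(t^{-\theta})$, but one needs $|\dot\beta_1+a|\lesssim t^{-1-\ths}$, a full power of $t^{-1}$ better. The mechanism is that $\dot\beta_1+a$ is precisely the last component of $\vec m_1^a$, which is paired via the orthogonality $\la\varepsilon,\ii\partial_1P\ra=0$; the gain comes from the fact that the leading source in that particular projection has already been cancelled by the definition $a=\frac1{2c}\la F,\partial_1P\ra$ in \eqref{onFFGG}, so $\la F^\perp,\partial_1P\ra=0$ exactly. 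What remains is a product of two small quantities: the residual interaction (of size $e^{-2c\sigma}\sim t^{-2}$) against the remaining modulation and error terms, plus $\la\varepsilon,\cdot\ra$ contributions multiplied by factors already of size $t^{-\theta}$ or $e^{-c\sigma}$. Carefully tracking these products — using \eqref{ePetaR} to replace the naive $\|\varepsilon\|_{H^1}$ by the better $\la\varepsilon,P\ra$-type bound where the structure permits, and using \eqref{dtF} to control $\dot a$ — yields the quadratic improvement to $t^{-1-\ths}$. I expect the main obstacle to be the bookkeeping in this last step: isolating exactly which terms benefit from the orthogonality-induced cancellation and which must instead be absorbed using the refined mass estimate \eqref{ePetaR}, so that every remaining contribution is genuinely of order $t^{-1-\ths}$ rather than $t^{-\theta}$.
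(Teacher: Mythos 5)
Your Step 1 matches the paper's argument for \eqref{ePetaR} (mass conservation for $u$ and $v$, with the $\la\varepsilon,\varphi\ra$ term and the $Q_c$--$A$ overlap producing the $\log t$ factor), and your Step 3 correctly identifies the exact cancellation $\la F^\perp,\partial_1 P\ra=0$ built into the definition of $a$. However, Step 2 contains a genuine gap that invalidates the claimed proof of \eqref{param} and \eqref{m}. You bound the linear-in-$\varepsilon$ terms by $\|\varepsilon\|_{H^1}\lesssim t^{-\ths}$ and then assert that the resulting modulation bounds are dominated by $\max(\|\varepsilon\|_{H^1},\|F^\perp\|_{L^2},\|G^\perp\|_{L^2})\lesssim t^{-\theta}$. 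This is inconsistent: since $\ths<\theta$, one has $t^{-\ths}\gg t^{-\theta}$, so your scheme can only yield $|\dot\gamma_j|+|\vec{m}_j|\lesssim t^{-\ths}$, strictly weaker than the statement of the lemma. The gain from $t^{-\ths}$ to $t^{-\theta}$ is not cosmetic: it is exactly what makes the term \eqref{diese2} in the energy estimate of order $t^{-(1+\ths+\theta)}=o(t^{-1-2\ths}(\log t)^{-1})$, which in turn is what allows the bootstrap bound on $\|\varepsilon\|_{H^1}$ to be \emph{strictly} improved in Proposition~\ref{pr:boot}; with modulation estimates of size $t^{-\ths}$ only, the energy flux would be of size $t^{-1-2\ths}$ and the bootstrap would not close.

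The missing idea is the actual role of the relations \eqref{e:s}, which you invoke only to justify invertibility of the modulation matrix (that part needs nothing more than the nondegenerate pairings $\la P,\Lambda_1 P\ra=2c$, etc.). In the paper, \eqref{e:s} is used through self-adjointness to downgrade every $\varepsilon$-linear term \emph{below} $\|\varepsilon\|_{H^1}$: pairing the linearized operator against $\Lambda_1 P$ and using $\Lp(\Lambda Q)=-2Q$ turns the linear term into $-2c^2\la\varepsilon,P\ra+O(t^{-1}\|\varepsilon\|_{H^1})$, and $\la\varepsilon,P\ra$ is controlled at order $t^{-2}\log t$ by the mass estimate \eqref{ePetaR} --- this is where \eqref{ePetaR} actually enters, namely in the $\dot\gamma_1$ equation, not (as you suggest) in the $\dot\beta$ equation; pairing against $x_1 P$ and using $\Lm(xQ)=-2Q'$ turns the linear term into a multiple of $\la\varepsilon,\ii\partial_1 P\ra$, which vanishes by the third orthogonality condition; pairing against $\ii\partial_1 P$ and using $\Lp(Q')=0$ kills the linear term entirely, which, together with $\la F^\perp,\partial_1 P\ra=0$, is the true mechanism behind \eqref{beta}. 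After these conversions the dominant sources are $\|F\|_{L^2}\lesssim t^{-2}$, $\|G\|_{L^2}\lesssim e^{-c\theta\sigma}\sim t^{-\theta}$ and quadratic terms of size $t^{-2\ths}\leq t^{-1-\ths}$, and the near-triangular system then genuinely closes at the rates \eqref{param}--\eqref{beta}. Without this structural use of \eqref{e:s}, mass conservation, and the orthogonality conditions, the lemma cannot be recovered from the bootstrap bound on $\|\varepsilon\|_{H^1}$ alone.
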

\begin{proof}
Proof of \eqref{ePetaR}.
First, it follows from Lemma~\ref{tech} and \eqref{BS} that
\[
\|U\|_{L^2}^2=\|Q_c+e^{-c\sigma} A(\cdot +\sigma)\|_{L^2}^2 = \|Q_c\|_{L^2}^2+O(t^{-2}\log t).
\]
We use the mass conservation for $u$ and $\varepsilon(\tin)=0$,
\[
\|U+\varepsilon\|_{L^2}^2=\|u\|_{L^2}^2=\|u(\tin)\|_{L^2}^2=\|U(\tin)\|_{L^2}^2=\|Q_c\|_{L^2}^2+O(\tin^{-2}\log \tin),
\]
and thus by \eqref{BS},
\[
2\la \varepsilon,U\ra = \|U+\varepsilon\|_{L^2}^2-\|U\|_{L^2}^2-\|\varepsilon\|_{L^2}^2
=O(t^{-2}\log t).
\]
Last, using $|\la \varepsilon,\varphi\ra|\leq \|\varepsilon\|_{L^2}\|\varphi\|_{L^2}
\lesssim t^{-1} \|\varepsilon\|_{L^2}\lesssim t^{-1-\ths}$
and $2\la \varepsilon,P\ra =2\la \varepsilon,U\ra - 2\la \varepsilon,\varphi\ra$, we obtain
$|\la \varepsilon,P\ra|\lesssim t^{-2}\log t$. The estimate on $\la \eta,R\ra$ follows directly from
$\|v\|_{L^2}=\|v(\tin)\|_{L^2}$.

\smallskip

Proof of \eqref{param}-\eqref{m}-\eqref{beta}. We use the special choice of orthogonality
conditions \eqref{ortho} as well as the relations \eqref{e:s}. We refer to the proof of Lemma~7 in \cite{MRlog}
for a similar argument.
First, differentiating the second orthogonality in \eqref{ortho} and using \eqref{eq:ee},
\begin{align*}
0&=\frac d{dt} \la \varepsilon,\ii \Lambda_1 P\ra
=-\la \ii \partial_t\varepsilon,\Lambda_1 P\ra + \la \varepsilon,\ii\partial_t \Lambda_1 P\ra\\
& = -\la -\partial_x^2\varepsilon +c^2\varepsilon+h(U+\varepsilon,V+\eta)-h(U,V),\Lambda_1 P\ra\\
&\quad +\la F,\Lambda_1 P\ra-\la \vec{m}_1\cdot\vec\md_1,\Lambda_1 P\ra-\la\vec{m}_\varphi\cdot\vec\md_\varphi, \Lambda_1 P\ra
-\la\ii\varepsilon, \partial_t (\Lambda_1 P)-\ii c^2\Lambda_1 P\ra.
\end{align*}
We claim
\begin{equation}\label{diese}
\left| \la -\partial_x^2\varepsilon+c^2 \varepsilon+h(U+\varepsilon,V+\eta)-h(U,V),\Lambda_1 P\ra\right|
\lesssim t^{-2}\log t.
\end{equation}
Observe that
\begin{align*}
h(U+\varepsilon,V+\eta)-h(U,V)
=2|U|^2 \varepsilon +U^2 \bar \varepsilon +\omega |V|^2\varepsilon
+2\omega U\Re(V\bar \eta)+O(|\varepsilon|^2+|\eta|^2).
\end{align*}
By Lemma~\ref{tech},
$\|V^2\Lambda_1 P\|_{L^2}\lesssim t^{-1} \log t$, $\|UV\Lambda_1 P\|_{L^2}\lesssim t^{-1}$,
and thus
\begin{align*}
&\left| \la -\partial_x^2\varepsilon+c^2 \varepsilon+h(U+\varepsilon,V+\eta)-h(U,V),\Lambda_1 P\ra
-\la \varepsilon, -\partial_x^2( \Lambda_1 P)+c^2\Lambda_1 P+3|P|^2 \Lambda_1 P\ra\right|\\
&\quad \lesssim t^{-1}(\log t)\left(\|\varepsilon\|_{L^2}+\|\eta\|_{L^2}\right)+ \|\varepsilon\|_{L^2}^2+\|\eta\|_{L^2}^2
\lesssim t^{-1-\theta_1}\log t .
\end{align*}
Using $\Lp (\Lambda Q)=-2Q$ from \eqref{e:s}
and
$\|\partial_x^2 (\Lambda_1 P) - \partial_1^2 (\Lambda_1P)\|_{L^2}
\lesssim |\beta_1|\lesssim t^{-1}$
(by analogy with the notation introduced in~\S\ref{S:2:1}, we set $\partial_1^2 (\Lambda_1P)=(\Lambda Q_c)''(x-\sigma_1) e^{\ii \Gamma_1}$)
 we see that
\[
\| [-\partial_x^2 (\Lambda_1 P)+c^2 \Lambda_1 P+3 |P|^2\Lambda_1 P ] + 2c^2 P\|_{L^2}\lesssim t^{-1}.
\]
Thus, by \eqref{BS} and \eqref{ePetaR}, we obtain \eqref{diese}.

The estimate $|\la F,\Lambda_1 P\ra|\lesssim e^{-2c\sigma}\lesssim t^{-2}$ is clear from \eqref{F} and then \eqref{BS}.
Next, using $\la P,\Lambda_1 P\ra=\la Q_c,\Lambda Q_c\ra=\frac 12\|Q_c\|_{L^2}^2=2c$ and $\la \ii Q_c',\Lambda Q_c\ra=\la xQ_c,\Lambda Q_c\ra=0$,
we obtain
\[
-\la \vec{m}_1\cdot\vec\md_1,\Lambda_1P\ra
=-2c(\dot \gamma_1+\dot \beta_1\sigma_1+\beta_1^2).
\]
Moreover, using Lemma~\ref{tech},
\begin{align*}
-\la \vec{m}_\varphi\cdot\vec\md_\varphi,\Lambda_1 P\ra
&=-(\dot \gamma_1+\dot \beta_1\sigma_2+\beta_1^2)\la \varphi,\Lambda_1 P\ra
+\dot \beta_1\la x_2\varphi,\Lambda_1P\ra\\
&=(|\dot \gamma_1|+|\dot \beta_1||\sigma_2|+\beta_1^2)O(\sigma^2 e^{-2c\sigma}).
\end{align*}
Last, using the analogue of \eqref{dtP} for $\Lambda_1 P$, we have
\[
|\ii \la\varepsilon, \partial_t( \Lambda_1P )-\ii c^2\Lambda_1P\ra|
\lesssim (|\dot \gamma_1|+|\dot\beta_1||\sigma_1|+|\dot\sigma_1-2\beta_1|+|\beta_1|)\|\varepsilon\|_{L^2}.
\]
The conclusion of these estimates is
\begin{equation*}
|\dot \gamma_1|\lesssim t^{-2} \log t+t^{-1} |\dot\sigma_1-2\beta_1|+|\dot\beta_1|\log t.
\end{equation*}

Proceeding similarly with the orthogonality condition $\la \eta,\ii \Lambda_2 R\ra=0$, we check
\[
|\dot \gamma_2|\lesssim t^{-\theta}+t^{-1}|\dot\sigma_2-2\beta_2|+|\dot\beta_2|\log t.
\]
Note that we again use $\Lp (\Lambda Q)=-2Q$ and \eqref{ePetaR} for $\eta$. The term $t^{-\theta}$ comes from estimate
of $G$ in \eqref{G}, which is to be compared with \eqref{F} for $F$.

Next, differentiating the orthogonality conditions $\la \varepsilon,x_1 P\ra=\la \eta,x_2R\ra=0$,
using the relation $\Lm(xQ)=-2Q'$ from \eqref{e:s} and last $\la \varepsilon,\ii \partial_1 P\ra=\la \eta,\ii \partial_2 R\ra=0$,
we find
\begin{align*}
|\dot \sigma_1-2\beta_1|&\lesssim t^{-1-\ths} \log t+t^{-1}(|\dot \gamma_1|+|\dot\beta_1||\sigma_1|+|\dot \sigma_2-2\beta_2|), \\
|\dot \sigma_2-2\beta_2|&\lesssim t^{-1-\ths} \log t+t^{-1}(|\dot \gamma_2|+|\dot\beta_2||\sigma_2|+|\dot \sigma_1-2\beta_1|) .
\end{align*}
Note that for these estimates, we have also used
$\la F,\ii x_1 P\ra =0$ and $\la G,\ii x_2 R\ra =0$.

Last, differentiating the orthogonality conditions $\la \varepsilon,\ii \partial_1 P\ra=\la \eta,\ii \partial_2 R\ra=0$,
using the relation $\Lp (Q')=0$ from \eqref{e:s} and $\la F^\perp, \partial_1P \ra=\la G^\perp,\partial_2 R\ra=0$, we check that
\begin{equation*}
|\dot \beta_1+a|+|\dot \beta_2+b|\lesssim t^{-1-\ths}+t^{-1}(|\dot \gamma_1|+|\dot \sigma_1-2\beta_1|+|\dot \gamma_2|+|\dot \sigma_2-2\beta_2|).
\end{equation*}

The proof of \eqref{param}-\eqref{m}-\eqref{beta} follows from  the above estimates and \eqref{est:ab}.
\end{proof}
\subsection{Energy estimates}\label{S:ener}
Let
 \[
 H(u,v)=\dfrac{1}{4}|u|^4 +\dfrac{1}{4}|v|^4+\dfrac{\omega}{2}|u|^2 |v|^2 , \quad h(u,v)=(|u|^2+\omega|v|^2)u.
 \]
 and remark that
 \begin{align*}
 & d_1H(U,V)(\varepsilon)=\dfrac{1}{2}(|U|^2+\omega|V|^2)(U\bar{\varepsilon}+\bar{U}\varepsilon)=\Re\left(h(U,V)\varepsilon \right),\\
 & d_2H(U,V)(\eta)=\dfrac{1}{2}(|V|^2+\omega|U|^2)(V\bar{\eta}+\bar{V}\eta)=\Re\left(h(V,U)\eta \right),\\
 & d_1h(U,V)(\varepsilon) = 2|U|^2\varepsilon +U^2\bar{\varepsilon} +\omega |V|^2 \varepsilon, \quad d_2h(U,V)(\eta)=\omega (V\bar{\eta}+\bar{V}\eta)U,\\
 & \frac 12 (\varepsilon, \eta)^\textup{T}(d^2h)(U,V)(\varepsilon, \eta)=2 \varepsilon \Re (U\bar{\varepsilon})+U|\varepsilon|^2
 +2 \omega \varepsilon \Re( V \bar{\eta})+\omega U|\eta|^2.
 \end{align*} 
 Consider the energy functional for $\big(\begin{smallmatrix}\varepsilon\\ \eta\end{smallmatrix}\big)$
 \begin{multline*}
 \mathbf{K}(t,\varepsilon,\eta)=\dfrac{1}{2}\int \big\{|\partial_x \varepsilon|^2+|\partial_x \eta|^2-2 \big[ H(U+\varepsilon, V+\eta) - H(U, V) \\
 - d_1H(U, V)(\varepsilon) -d_2H(U, V)(\eta) \big] \big\}
 \end{multline*}
 and the mass functionals for $\varepsilon$ and $\eta$
 \[
 \mathbf{M}=M_1+M_2,\quad M_1(\varepsilon)=\dfrac{c^2}{2}\int |\varepsilon|^2, \quad M_2(\eta)=\dfrac{1}{2}\int |\eta|^2.
 \]
Let $ \chi:[0,+\infty) \to [0,+\infty)$ be a smooth non-increasing function satisfying $\chi \equiv 1$ on $[0,\frac{1}{4}]$ and $\chi \equiv 0$ on $[\frac{1}{2},+\infty)$. Denote $\mathbf{J}=J_1+J_2$ where, for $j=1,2$,
\[
J_j(t,\varepsilon,\eta)=\beta_j \, \Im \int \left[(\partial_x\varepsilon) \bar{\varepsilon}+(\partial_x\eta)\bar{\eta}\right]\chi_j
\quad\mbox{where}\quad \chi_j(t,x)=\chi\left(\frac{|x-\sigma_j(t)|}{\log t}\right).\] 
Last, we set
\[
\mathbf S(t,\varepsilon,\eta)=\la \varepsilon,F^\perp\ra + 2\beta \la \varepsilon,\ii \phi\ra
+ \la \eta,G^\perp\ra\quad \mbox{where}\quad
\phi=\partial_1 \varphi-c\varphi.
\]
Last, set
\[
\mathbf{W}(t,\varepsilon, \eta)=\mathbf{K} (t,\varepsilon, \eta) + \mathbf{M}(t,\varepsilon, \eta)- \mathbf{J}(t,\varepsilon, \eta)
-\mathbf{S}(t,\varepsilon, \eta).
\]
We refer to \cite{KMR,Martel1,MMnls,MMT2,MRlog,NV1,RaSz11} for similar energy functionals.
However, the introduction of the correcting term $\mathbf{S}$ seems to be a previously unnoticed general improvement of the energy method
in this context. See Section~\ref{discuss}.

Under the bootstrap~\eqref{BS}, we prove the following estimates.
\begin{proposition}\label{prop:coer}
Let $\ths<\theta<\min\{\frac 1c;2\}$.
It holds
\begin{equation}
\label{'eq3.25} 
\|\varepsilon \|_{H^1}^2 + \| \eta \|_{H^1}^2\lesssim 
\mathbf{W}(t, \varepsilon , \eta ) + C t^{-2\theta},
\end{equation}
and
\begin{align}
\label{eq3.26}
\left| \dfrac{d}{dt} [\mathbf{W}(t,\varepsilon , \eta )] \right| \lesssim t^{-1-2\ths}(\log t)^{-1}.
\end{align}
\end{proposition}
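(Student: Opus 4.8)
The plan is to prove the two bounds separately, using throughout the coercivity of Lemma~\ref{lem:0}(i), the orthogonality conditions~\eqref{ortho}, the modulation bounds of Lemma~\ref{le:mod}, and the source estimates \eqref{F}--\eqref{dtG}.

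For the coercivity bound, since $\mathbf{K}+\mathbf{M}=\mathbf{W}+\mathbf{J}+\mathbf{S}$, it suffices to bound $\mathbf{K}+\mathbf{M}$ from below and to absorb the correctors. Expanding $H(U+\varepsilon,V+\eta)$ to second order, $\mathbf{K}+\mathbf{M}$ is, up to a cubic remainder $O(\|\varepsilon\|_{H^1}^3+\|\eta\|_{H^1}^3)$ controlled by~\eqref{BS}, the sum of the linearised quadratic forms in $\varepsilon$ and $\eta$, with the masses weighted by the soliton frequencies $c^2$ and $1$. After removing the phases $e^{\ii\Gamma_j}$ (the velocity contributions being collected in $\mathbf{J}$, with $|\mathbf{J}|\lesssim(|\beta_1|+|\beta_2|)(\|\varepsilon\|_{H^1}^2+\|\eta\|_{H^1}^2)\lesssim t^{-1}(\|\varepsilon\|_{H^1}^2+\|\eta\|_{H^1}^2)$, hence absorbable), I localise $\varepsilon$ and $\eta$ around the two solitons and in the exterior. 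Near $\sigma_1$ the $\varepsilon$-form is governed by $\Lp,\Lm$ for $Q_c$ and near $\sigma_2$ the $\eta$-form by $\Lp,\Lm$ for $Q$, both coercive modulo the directions $Q$, $xQ$, $\ii\Lambda Q$ by Lemma~\ref{lem:0}(i). The crossed contributions — $\varepsilon$ sitting on the second soliton, $\eta$ on the first — reduce respectively to $\la\Lc(\cdot-\sigma_2)\varepsilon,\varepsilon\ra$ and to $\int|\partial_x\eta|^2+|\eta|^2-\omega Q_c^2(\cdot-\sigma_1)|\eta|^2$; this is \emph{step~1}, and here the hypothesis $\omega<\frac12c(c+1)$ is used, precisely through the coercivity of $\Lc$ in Lemma~\ref{lem:0}(i) (which may fail when $\frac12c(c+1)\le\omega$). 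The three bad directions are killed by~\eqref{ortho} (for $xQ$ and $\ii\Lambda Q$) and by the mass bounds~\eqref{ePetaR} (for $Q$, since $|\la\varepsilon,P\ra|\lesssim t^{-2}\log t$, $|\la\eta,R\ra|\lesssim t^{-2\ths}$). Finally $\mathbf{S}$ is linear, so by Cauchy--Schwarz with a small parameter its contribution is absorbed into the left-hand side up to $\|F^\perp\|_{L^2}^2+\beta^2\|\phi\|_{L^2}^2+\|G^\perp\|_{L^2}^2\lesssim t^{-2\theta}$ by \eqref{F}, \eqref{G}; the squared bad directions are likewise $\lesssim t^{-2\theta}$. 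This yields the coercivity bound.

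For the derivative bound, I would write $\frac{d}{dt}\mathbf{W}=\frac{d}{dt}\mathbf{K}+\frac{d}{dt}\mathbf{M}-\frac{d}{dt}\mathbf{J}-\frac{d}{dt}\mathbf{S}$ and use~\eqref{eq:ee}. Since $-\partial_x^2\varepsilon-[h(U+\varepsilon,V+\eta)-h(U,V)]=\ii\partial_t\varepsilon+\EE_U$, the $\varepsilon$-variation of $\mathbf{K}$ equals $\ii\partial_t\varepsilon+\EE_U$, the skew term $\la\partial_t\varepsilon,\ii\partial_t\varepsilon\ra$ vanishes, and the part of $\frac{d}{dt}\mathbf{K}$ carried by $\partial_t(\varepsilon,\eta)$ reduces to the source pairings $\la\partial_t\varepsilon,\EE_U\ra+\la\partial_t\eta,\EE_V\ra$. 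Two mechanisms then govern the dangerous \emph{linear} terms. First, the frequency-matched weights $c^2$ and $1$ make the phase-rotation quadratic terms of $\frac{d}{dt}\mathbf{M}$ cancel those produced by the explicit time derivative of the potentials in $\frac{d}{dt}\mathbf{K}$. Second, decomposing $\EE_U=F^\perp-\vec m_1^a\cdot\vec\md_1-\vec m_\varphi\cdot\vec\md_\varphi$ and $\EE_V=G^\perp-\vec m_2^b\cdot\vec\md_2$ as in~\eqref{new:EF}, the corrector $\mathbf{S}$ cancels the pairings $\la\partial_t\varepsilon,F^\perp\ra$, $\la\partial_t\eta,G^\perp\ra$ and the $\phi$-analogue; the phase parts of the ensuing residuals $-\la\varepsilon,\partial_tF^\perp\ra$, $-\la\eta,\partial_tG^\perp\ra$ are in turn cancelled by the source part of $\frac{d}{dt}\mathbf{M}$, leaving only $\la\varepsilon,\partial_tF^\perp-\ii c^2F^\perp\ra$ and $\la\eta,\partial_tG^\perp-\ii G^\perp\ra$, which are $\lesssim t^{-\ths}(t^{-3}+t^{-1-\theta})\ll t^{-1-2\ths}$ by \eqref{dtF}, \eqref{dtG} and $\theta>\ths$. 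This double cancellation — absent from the earlier energy method — is exactly the role of $\mathbf{S}$ together with the mass weights. The pairings along the modulation vectors are handled by~\eqref{e:s},~\eqref{ortho},~\eqref{m},~\eqref{beta} and are negligible. There remain the quadratic terms: after the cancellation above the surviving potential-derivative contributions carry only the small coefficients $\dot\sigma_j-2\beta_j=O(t^{-\theta})$, $\dot\gamma_j=O(t^{-\theta})$, $\dot\beta_j=O(t^{-2})$ or the interaction factor $e^{-c\sigma}\sim t^{-1}$, so they are $\lesssim(t^{-\theta}+t^{-1}e^{-c\sigma})(\|\varepsilon\|_{H^1}^2+\|\eta\|_{H^1}^2)\ll t^{-1-2\ths}$, while the cubic and higher terms are $\lesssim\|\varepsilon\|_{H^1}^3+\|\eta\|_{H^1}^3\lesssim t^{-3\ths}\ll t^{-1-2\ths}$ since $\ths>1$. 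The dominant contribution comes from $\frac{d}{dt}\mathbf{J}$: the localised momentum is built so that its leading part cancels the convective quadratic term of $\frac{d}{dt}\mathbf{K}$, the mismatch being supported where $\chi_j$ varies, i.e. on $\{|x-\sigma_j|\sim\log t\}$, where $|\partial_x\chi_j|\lesssim(\log t)^{-1}$ and $|\partial_t\chi_j|\lesssim(t\log t)^{-1}$. Hence this residual is of size
\[
(|\beta_1|+|\beta_2|)(\log t)^{-1}(\|\varepsilon\|_{H^1}^2+\|\eta\|_{H^1}^2)\lesssim t^{-1}(\log t)^{-1}t^{-2\ths},
\]
which is exactly $t^{-1-2\ths}(\log t)^{-1}$, the claimed bound; the extra factor $(\log t)^{-1}$ is what will later strictly improve the bootstrap $\|\varepsilon\|_{H^1}+\|\eta\|_{H^1}\leq t^{-\ths}$.

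The main obstacle is the organisation of the linear source terms: individually the $F^\perp$- and $G^\perp$-pairings are of size $t^{-2-\ths}$ and $t^{-\ths-\theta}$, both larger than the target, so the whole estimate rests on the exact double cancellation between $\mathbf{S}$, the frequency-matched mass $\mathbf{M}$, and the self-adjoint structure, after which nothing but the $\chi_j$-commutator of $\mathbf{J}$ survives at order $t^{-1-2\ths}(\log t)^{-1}$. Verifying these cancellations term by term, and checking that the $\Lc$-coercivity of \emph{step~1} indeed holds under $\omega<\frac12c(c+1)$, is where the real work lies.
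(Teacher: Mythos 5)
Your proposal is correct and follows essentially the same route as the paper's proof: the same reduction of $\mathbf{K}+\mathbf{M}$ to the two decoupled localized quadratic forms (with the $\Lc$-coercivity of Lemma~\ref{lem:0}, where \eqref{omega} enters, and the bad directions removed by \eqref{ortho} and \eqref{ePetaR}, while $\mathbf{J}$ and $\mathbf{S}$ are absorbed up to $Ct^{-2\theta}$), and the same cancellation scheme for $\frac{d}{dt}\mathbf{W}$ — the frequency-weighted mass against the phase-rotation terms via gauge invariance, the localized momentum against the convective terms via the translation identity with the $\chi_j$-commutator residual of size $t^{-1-2\ths}(\log t)^{-1}$, and $\mathbf{S}$ against the $F^\perp$, $G^\perp$, $\phi$ pairings, leaving only the modulation-vector pairings controlled by \eqref{e:s}, \eqref{ortho}, \eqref{m}, \eqref{beta}. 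This is precisely the paper's steps 1--6.
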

\begin{proof}[Proof of Proposition \ref{prop:coer}]
\textbf{step 1.} The coercivity property \eqref{'eq3.25} is a consequence of the coercivity property around one solitary wave in Lemma \ref{lem:0}, the orthogonality relations \eqref{ortho}-\eqref{ePetaR}) and the positivity of $\Lc$.
It also involves a localization argument similar to the proof of Lemma 4.1 in \cite{MMT2} for the scalar case.
 
Note that by \eqref{BS},
\[|\mathbf{J}(t,\varepsilon, \eta)| \lesssim t^{-1}(\|\varepsilon\|_{H^1}^2+\|\eta \|_{H^1}^2)\]
 and by \eqref{F} and \eqref{G},
\[
|\mathbf{S}(t,\varepsilon, \eta)|\lesssim t^{-\theta}(\|\varepsilon\|_{H^1}+\|\eta\|_{H^1}).
\]

Next, we see that the following terms in the functional $\mathbf{K}$ are easily controlled
\[
\int (|P\varphi|+|\varphi|^2)|\varepsilon|^2+\int |U \varepsilon| |V\eta|\lesssim t^{-1}\left( \|\varepsilon\|_{H^1}^2+ \|\eta\|_{H^1}^2\right).
\]
Moreover, cubic and higher order terms in $\varepsilon$ or $\eta$ are of order $t^{-\ths}\left( \|\varepsilon\|_{H^1}^2+ \|\eta\|_{H^1}^2\right)$. 

Therefore, we are reduced to consider the following two decoupled functionals
\begin{align*}
\mathbf{W}_1&= \int \left\{|\partial_x \varepsilon|^2+c^2 |\varepsilon|^2 -|P|^2|\varepsilon|^2-2[\Re(P\bar \varepsilon)]^2
-\omega |R|^2 |\varepsilon|^2\right\},\\
\mathbf{W}_2&= \int \left\{|\partial_x \eta|^2+ |\eta|^2 -|R|^2|\eta|^2-2[\Re(R\bar \eta)]^2
-\omega |P|^2 |\eta|^2\right\}.
\end{align*}
We focus on the coercivity property for $\mathbf{W}_1$, the case of $\mathbf{W}_2$ is similar.

Denote $\Phi : \RR \to \RR$ an even function of class $\mathcal{C}^2$ such that
\[
\Phi \equiv 1 \text{ on } [0,1], \quad \Phi \equiv e^{-x} \text{ on }[2,+\infty),
\quad 
e^{-x} \le \Phi(x) \le e^{-3x}, \quad \Phi' \le 0 \text{ on }\RR.
\]
Let $B > 1$ and $\Phi_B(x) = \Phi(x/B)$. We claim that for $B$ large enough, there exists $\mu_1>0$, such that for any $\hat\varepsilon$ satisfying
$\la \hat\varepsilon,Q\ra=\la \hat\varepsilon,xQ\ra=\la \hat\varepsilon,\ii\Lambda Q\ra=0$, and any $\tilde\varepsilon$, it holds
\begin{align*}
\mathcal{N}_1 (\hat{\varepsilon} )
&:=\int \Phi_B \left\{ |\partial_x \hat{\varepsilon}|^2+|\hat{\varepsilon}|^2 - Q^2|\hat{\varepsilon}|^2-2\left[\Re( Q\bar{\hat{\varepsilon}})\right]^2 \right\} \geq \mu_1 \int \Phi_B (|\partial_x \hat{\varepsilon}|^2+|\hat{\varepsilon}|^2),\\
\mathcal{N}_{2} (\tilde{\varepsilon} ) &
:= \int \Phi_B\left\{|\partial_x \tilde{\varepsilon}|^2+c^2|\tilde{\varepsilon}|^2 - {\omega} Q^2|\tilde{\varepsilon}|^2 \right\} 
\geq \mu_1 \int \Phi_B (|\partial_x \tilde{\varepsilon}|^2+|\tilde{\varepsilon}|^2).
\end{align*}
Setting $z=\hat{\varepsilon}\Phi_B^{\frac 12}$ and following the proof of Claim~8 in \cite{MMT2}, the coercivity of $\mathcal{N}_1$
follows from (i) of Lemma~\ref{lem:0} applied to the function $z$.
A similar localization argument, using the coercivity property of $\Lc$ proves the estimate for $\mathcal{N}_2(\tilde{\varepsilon} )$
without any orthogonality condition on $\tilde{\varepsilon}$.
This is where our proof needs the condition \eqref{omega}.

Using these estimates with $\hat\varepsilon$ and $\tilde\varepsilon$ such that $ \varepsilon=c \hat \varepsilon( c(x-\sigma_1)) e^{\ii \Gamma_1}$ and
$ \varepsilon=\tilde\varepsilon(x-\sigma_2) e^{\ii \Gamma_2}$, the orthogonality conditions \eqref{ortho}
and the almost orthogonality relation \eqref{ePetaR}, we obtain the estimate
$\|\varepsilon\|_{H^1}^2\lesssim \mathbf{W}_1 + t^{-4} (\log t)^2$.

\smallskip 

\noindent\textbf{step 2.} Time variation of the energy. Denote 
\begin{align*}
K_1=h(U+\varepsilon, V+\eta) - h(U, V) - d_1h(U,V)(\varepsilon) - d_2h(U,V)(\eta),
\end{align*}
\[
K_2=h(V+\eta, U+\varepsilon) - h(V, U) - d_1h(V,U)(\eta) - d_2h(V,U)(\varepsilon),
\]
so that
\begin{align*}
K_1&=\dfrac12 {(\varepsilon, \eta)^\textup{T} (d^2h)(U, V)(\varepsilon, \eta)}+O(|\varepsilon|^3+|\eta|^3),\\
K_2&=\dfrac12 {(\eta, \varepsilon)^\textup{T} (d^2h)(V, U)(\eta, \varepsilon)}+O(|\varepsilon|^3+|\eta|^3).
\end{align*}
We prove the following estimate
\begin{equation}\label{eq3.31}
\begin{aligned}
 \dfrac{d}{dt} [\mathbf{K}(t,\varepsilon, \eta)]&= 2\beta_1 \langle \partial_x U, K_1 \rangle 
 + 2\beta_2 \langle \partial_x V, K_2 \rangle
-c^2\langle \ii U, K_1 \rangle-\langle \ii V, K_2\rangle \\
&\quad 
-\langle \ii D_\varepsilon\mathbf{K} , \mathcal{E}_{U} \rangle 
-\langle \ii D_\eta \mathbf{K} , \mathcal{E}_{V}\rangle +
O(t^{-1-2\ths}(\log t)^{-1}).\end{aligned}
\end{equation}
The time derivative of $t \mapsto \mathbf{K} (t,\varepsilon(t),\eta(t))$ splits into three parts
\[
\dfrac{d}{dt} [\mathbf{K}(t, \varepsilon, \eta]=D_t\mathbf{K} (t, \varepsilon, \eta)
+\langle D_\varepsilon \mathbf{K}(t, \varepsilon, \eta), \partial_t \varepsilon \rangle
+ \langle D_\eta \mathbf{K}(t, \varepsilon, \eta), \partial_t\eta \rangle,
\]
where $D_t$ denotes the differentiation of $\mathbf{K}$ with respect to $t$, and $D_\varepsilon, D_\eta$ the differentiation of $\mathbf{K}$ with respect to $\varepsilon$ and $\eta$. 
In particular, $D_t\mathbf{K} =- \langle \partial_t U, K_1 \rangle -\langle \partial_t V, K_2 \rangle$.

We claim
\begin{equation}\label{sur:UV}
\begin{aligned}
& \partial_t U = \ii c^2 U -2\beta_1 \partial_x U + O_{H^1}(t^{-\theta}),\\
& \partial_t V = \ii V -2\beta_2 \partial_x V + O_{H^1}(t^{-\theta}).
\end{aligned}
\end{equation}
Indeed, from the definition of $U$
\begin{align*}
\partial_t U &= \ii c^2 U -2\beta_1 \partial_1 U
-(\dot \sigma_1 -2\beta_1)\partial_1 P +\ii (\dot \gamma_1+\dot \beta_1\sigma_1) P
+\ii \dot \beta_1 x_1 P
 \\
&\quad -(\dot \sigma_2 -2\beta_1)\partial_1 \varphi+\ii (\dot\gamma_1+\dot \beta_1\sigma_2+\ii c\dot \sigma) \varphi
+\ii \dot \beta_1x_2\varphi.
\end{align*}
Thus, using \eqref{param} and \eqref{beta}, we obtain \eqref{sur:UV} for $U$.
The proof for $V$ is similar.

Using \eqref{sur:UV} and \eqref{BS}, we obtain
\begin{equation*}
D_t \HH (t, \varepsilon, \eta) = 2\beta_1 \langle \partial_x U, K_1 \rangle +2\beta_2 \langle \partial_x V, K_2 \rangle
- c^2 \langle \ii U , K_1 \rangle - \langle \ii V , K_2 \rangle
 + O (t^{-\theta-2\ths}).
\end{equation*}
Next, we observe 
\begin{equation*}
D_\varepsilon \HH (t,\varepsilon, \eta)=-\partial_x^2 \varepsilon - h(U+\varepsilon, V+\eta) +h(U,V)
\end{equation*}
so that the equation of $\varepsilon$ in \eqref{eq:ee} rewrites
$\ii \partial_t\varepsilon - D_\varepsilon \HH(t, \varepsilon, \eta) +\mathcal{E}_U= 0$ and thus
\begin{equation*}
\langle D_\varepsilon \HH (t, \varepsilon, \eta), \partial_t\varepsilon \rangle = -\langle \ii D_\varepsilon \HH (t,\varepsilon, \eta), \mathcal{E}_U \rangle.
\end{equation*}
Similarly,
\[
\langle D_\eta \HH (t, \varepsilon, \eta), \partial_t\eta \rangle = -\langle \ii D_\eta \HH (t,\varepsilon, \eta), \mathcal{E}_V \rangle.
\]
We have proved \eqref{eq3.31}. 
\smallskip

\noindent \textbf{step 3.} Time variation of the total mass. We claim
\begin{equation}
\label{eq3.32}
\begin{aligned}
\dfrac{d}{dt}[\MM( \varepsilon,\eta)]= c^2\langle \ii U, K_1 \rangle+ \langle \ii V, K_2 \rangle
-\langle \ii c^2 \varepsilon, \mathcal{E}_U \rangle - \langle \ii \eta, \mathcal{E}_V \rangle.\end{aligned}
\end{equation}
By integration by parts, we have $\langle \ii \partial_x^2 \varepsilon, \varepsilon \rangle=0$ so
from \eqref{eq:ee}, 
\[
\dfrac{d}{dt}[M_1(\varepsilon)] 
=c^2 \langle \partial_t\varepsilon, \varepsilon \rangle=- c^2 \langle \ii\varepsilon, h(U+\varepsilon, V+\eta)-h(U,V) \rangle- c^2\langle \ii\varepsilon, \mathcal{E}_{U} \rangle.
\]
We claim the following identity
\begin{align}
\label{eq3.33}
\langle \ii U, K_1 \rangle + \langle \ii \varepsilon, h(U+\varepsilon, V+\eta) - h(U,V) \rangle=0.
\end{align}
Indeed, since $h(u,v) \overline{u}$ is real, for all $\theta \in \RR$, it holds
\[
\langle \ii (U+\theta \varepsilon), h(U +\theta \varepsilon, V + \theta \eta) \rangle = 0.
\]
Differentiating with respect to $\theta$, and taking $\theta = 0$, we obtain
\[
\langle \ii \varepsilon, h(U,V) \rangle + \langle \ii U, d_1h (U , V) (\varepsilon ) \rangle 
+\langle \ii U , d_2h (U, V ) ( \eta ) \rangle = 0
\]
Moreover, with $\theta =0$ and $\theta = 1$ 
\[
\langle i U , h (U, V ) \rangle = 0,\quad
\langle i (U + \varepsilon ), h (U + \varepsilon , V + \eta ) \rangle = 0 .
\]
We see that \eqref{eq3.33} follows from combining these identities.

This yields 
$\frac{d}{dt}M_1 = c^2\langle \ii U , K_1 \rangle - c^2\langle \ii \varepsilon , \mathcal{E}_U \rangle$. Computing also $\frac{d}{dt}M_2 $, we obtain
\eqref{eq3.32}. 

\smallskip

\noindent \textbf{step 4.} Time variation of the localized momentum. We claim
 \begin{equation} \label{eq3.34}
\dfrac{d}{dt} [\mathbf{J}(t, \varepsilon , \eta ) ]
= 2 \beta_1 \langle \partial_x U , K_1 \rangle +2 \beta_2 \langle \partial_x V, K_2 \rangle +O(t^{-1-2\ths}(\log t)^{-1}).
 \end{equation}
By direct computation,
 \begin{align*}
 \dfrac{d}{dt} [ J_1 (t, \varepsilon , \eta)] & 
 = \dot{\beta }_1 \Im \int [ (\partial_x\varepsilon) \overline{\varepsilon} +(\partial_x\eta) \overline{\eta}] \chi_1
 +\beta_1 \Im \int [ (\partial_x\varepsilon) \overline{\varepsilon} +(\partial_x\eta) \overline{\eta}] \partial_t\chi_1 
 \\
 &\quad +\beta_1 \langle \ii \partial_t\varepsilon , 2 \chi_1 \partial_x \varepsilon + \varepsilon \partial_x \chi_1 \rangle
+\beta_1 \langle \ii \partial_t\eta , 2 \chi_1 \partial_x \eta + \eta \partial_x \chi_1 \rangle.
 \end{align*}
 By \eqref{BS} and \eqref{beta}, we have 
 \[
\left | \dot{ \beta }_1 \int [ (\partial_x\varepsilon) \overline{\varepsilon} +(\partial_x\eta) \overline{\eta}] \chi_1 \right | \lesssim t ^ {-2} \left(\| \varepsilon \| ^ 2 _ {H ^ 1 }+ \|\eta\|_{H^1}^2\right)
\lesssim t^{-2-2\ths}.
\]
By direct computations,
\[
\partial_t \chi_j(t,x)=
-\left[\frac{\dot\sigma_j}{\log t}\frac{x-\sigma_j}{|x-\sigma_j|}
+\frac{|x-\sigma_j|}{t(\log t)^2}\right] \chi'\left(\frac{|x-\sigma_j|}{\log t}\right)
\]
and so by \eqref{BS}, \eqref{param} and the properties of $\chi$,
$
|\partial_t \chi_j| \lesssim t^{-1} (\log t) ^ {-1}$.
It follows that
\[ \left | \beta_1 \Im \int [ (\partial_x\varepsilon) \overline{\varepsilon} +(\partial_x\eta) \overline{\eta}] \partial_t\chi_1 \right | \lesssim t ^ {-2} (\log t) ^ {-1} \left( \| \varepsilon\| ^2 _ {H ^1 } + \|\eta\|_{H^2}^2\right)
\lesssim t^{-2-2\ths}(\log t) ^ {-1} .
\]

Next, using the equation \eqref{eq:ee}
\begin{align*}
\langle \ii \partial_t \varepsilon , 2 \chi_1 \partial_x \varepsilon + \varepsilon \partial_x \chi_1 \rangle
&= -\langle \partial_x^2 \varepsilon , 2 \chi_1 \partial_x \varepsilon + \varepsilon \partial_x \chi_1 \rangle\\
&\quad-\la h(U+\varepsilon,V+\eta)-h(U,V),2 \chi_1 \partial_x \varepsilon + \varepsilon \partial_x \chi_1\ra\\
&\quad- \la \EE_U , 2 \chi_1 \partial_x \varepsilon + \varepsilon \partial_x \chi_1\ra.
\end{align*}
Integrating by parts, we have
\[ -\langle \partial_x^2 \varepsilon , 2 \chi_1 \partial_x \varepsilon + \varepsilon \partial_x \chi_1 \rangle = \int |\partial_x \varepsilon|^2 \partial_x \chi_1 - \dfrac{1}{2} \int |\varepsilon | ^2 \partial_x^3 \chi_1 .
\]
Since $|\partial_x \chi_1| \lesssim (\log t)^{-1}$ and $|\partial_x^3 \chi_1| \lesssim (\log t)^{-3}$,
from \eqref{BS}, we have
\[ \left | \beta_1 \langle \partial_x^2 \varepsilon , 2 \chi_1 \partial_x \varepsilon + \varepsilon \partial_x \chi_1 \rangle \right | \lesssim t ^ {-1-2\ths} (\log t)^{-1}.
\]
For the term containing $\mathcal{E}_{U}$, we use \eqref{on:UV}, \eqref{BS}, \eqref{F} and \eqref{m},
\[
\bigg| \beta_1\langle \mathcal{E}_{U}, 2\chi_1 \partial_x \varepsilon +\varepsilon \partial_x \chi_1 \rangle \bigg| \lesssim t^{-1-\theta}\|\varepsilon \|_{H^1} \lesssim t^{-1-2\ths}(\log t)^{-1}.
\]

Then, we estimate, using $|\partial_x \chi_1| \lesssim (\log t)^{-1}$,
\begin{equation*}
|\langle h(U+\varepsilon, V+\eta)-h(U,V), \varepsilon \partial_x \chi_1 \rangle|
 \lesssim (\log t)^{-1} \left(\|\varepsilon\|_{H^1}^2+\|\eta\|_{H^1}^2\right)
 \lesssim t^{-2\ths}(\log t)^{-1}.
\end{equation*}
Collecting the above estimates, we obtain
\begin{align*}
\dfrac{d}{dt} [J_1(t,\varepsilon, \eta)] 
&= -2\beta_1 \langle  \chi_1 \partial_x \varepsilon, h(U+\varepsilon, V+\eta)-h(U,V) \rangle\\
&\quad 
-2\beta_1 \langle  \chi_1 \partial_x \eta , h(V+\eta,U+\varepsilon)-h(V,U) \rangle
+ O(t^{-1-2\ths}(\log t)^{-1}).
\end{align*}

We complete the proof of \eqref{eq3.34} by showing the following
\begin{equation}\label{idee}\begin{aligned}
\langle \partial_x U, K_1 \rangle &+\langle \chi_1 \partial_x  \varepsilon, h(U+\varepsilon, V+\eta)-h(U,V)\rangle\\
&  +\langle  \chi_1 \partial_x  \eta, h( V+\eta,U+\varepsilon)-h(V,U) \rangle
=O(t^{-2\ths}(\log t)^{-1}).
\end{aligned}\end{equation}
First, we prove the identity
\begin{equation}
\label{eq3.35}
\begin{aligned}
 &\langle \partial_x U,K_1 \rangle + \langle \partial_x \varepsilon, h(U+\varepsilon, V+\eta) - h(U,V) \rangle \\&
 + \langle \partial_x V, K_2\rangle+ \langle \partial_x \eta, h(V+\eta, U+\varepsilon) - h(V,U) \rangle=0.
\end{aligned}
\end{equation} 
Indeed,   we have
\[
\langle \partial_x u, h(u,v) \rangle + \langle \partial_x v, h(v,u) \rangle=\int \partial_x [H(u,v)]=0.
\]
Applying this to $u=U+\theta \varepsilon$ and $v=V+\theta \eta$, we have that for all $\theta \in \RR$
\[
\langle \partial_x (U+\theta \varepsilon), h(U+\theta \varepsilon, V+\theta \eta) \rangle + \langle \partial_x (V+\theta \eta), h(V+\theta \eta, U+\theta\varepsilon) \rangle=0.
\]
Taking the derivative with respect to $\theta$ at $\theta = 0$, we obtain
\begin{align*}
\langle \partial_x \varepsilon , h(U,V)\rangle 
+ \langle \partial_x \eta, h(V,U) \rangle &
+ \langle \partial_x U, d_1h(U,V)(\varepsilon) \rangle + \langle \partial_x U, d_2h(U,V)(\eta) \rangle\\
&+ \langle \partial_x V, d_1h(V,U)(\eta) \rangle + \langle \partial_x V, d_2h(V,U)(\varepsilon) \rangle=0.
\end{align*}
Moreover, using the above identity with $ \theta = 0$ and $\theta = 1$, we have
\begin{align*}
& \langle \partial_x U, h(U,V) \rangle+\langle \partial_x V, h(V,U) \rangle=0,\\
& \langle \partial_x (U+\varepsilon), h(U+\varepsilon, V+\eta) \rangle + \langle \partial_x (V+\eta), h(V+\eta, U+\varepsilon) \rangle=0.
\end{align*}
 Gathering these identities, we obtain \eqref{eq3.35}.
 
We apply identity \eqref{eq3.35} to $\chi_1^{\frac 14}U$, $\chi_1^{\frac 14}V$, $ \chi_1^{\frac 14}\varepsilon$ and $\chi_1^{\frac 14}\eta$.
Recall that $|\partial_x \chi_1|\lesssim (\log t)^{-1}$ and also note that by the definition of $\chi$,
$|\chi_1 V|+(1-\chi_1) |\partial_x U| \lesssim (\log t)^{-1}$.
In particular, this shows that
\[
|\langle \partial_x (\chi_1^{\frac 14} U),K_1 \chi_1^{\frac 34}\rangle-\langle \partial_x U,K_1 \rangle|
+|\langle \partial_x (\chi_1^{\frac 14}V), K_2\chi_1^\frac34\rangle |
=O(t^{-2\ths}(\log t)^{-1}),
\]
\[
\langle [\chi_1^\frac 34\partial_x (\chi_1^\frac 14 \varepsilon)-\chi_1 \partial_x \varepsilon ], h(U+\varepsilon, V+\eta) - h(U,V) \rangle
=O(t^{-2\ths}(\log t)^{-1}),
\]
and
\[
\langle [\chi_1^\frac 34\partial_x (\chi_1^\frac 14 \eta)-\chi_1 \partial_x \eta], h(V+\eta,U+\varepsilon) - h(V,U) \rangle
=O(t^{-2\ths}(\log t)^{-1}).
\]
This proves \eqref{idee} and then \eqref{eq3.34}, the computations for $J_2$ being identical.

\smallskip

\noindent\textbf{step 5.} Additional correction terms.
We claim
\begin{equation}\label{on:S}
\frac d{dt} [\mathbf{S}(t, \varepsilon , \eta ) ] = 
-\la \ii (D_\varepsilon\mathbf{K}+c^2\varepsilon), F^\perp+2\ii \beta\phi\ra
-\la \ii (D_\eta\mathbf{K}+\eta), G^\perp\ra+O(t^{-(1+\theta+\theta_1)}).
\end{equation}
We compute, using \eqref{eq:ee},
\begin{align*}
\frac d{dt}\la \varepsilon,F^\perp\ra
= - \la \ii (D_\varepsilon\mathbf{K}+c^2\varepsilon), F^\perp\ra-\la \EE_U,\ii F^\perp\ra+\la \varepsilon,\partial_t F^\perp-\ii c^2 F^\perp\ra.
\end{align*}
From \eqref{onFFGG} and $F^\perp e^{-\ii\Gamma_1}\in \RR$, it follows that $\la \vec{m}_1^a\cdot \vec\md_1,\ii F^\perp\ra=0$.
One also observes that
\[
\la \vec{m}_\varphi \cdot \md_\varphi,\ii F^\perp\ra=
c\dot\sigma\la\varphi,F^\perp\ra
+(\dot\sigma_2-2\beta_1)\la \partial_1 \varphi,F^\perp\ra
=O(t^{-5}(\log t)^2)=O(t^{-(1+2\theta)},
\]
where we have used \eqref{param} and (from Lemma~\ref{tech} and the definitions of $F^\perp$ and $\varphi$)
\begin{equation}\label{phiF}
|\la \varphi,F^\perp\ra|+|\la \partial_1 \varphi,F^\perp\ra|\lesssim t^{-4}(\log t)^{2}.
\end{equation}
Since $\la F^\perp,\ii F^\perp\ra=0$, it follows that $\la \EE_U,\ii F^\perp\ra=O(t^{-(1+2\theta)})$.
Last, it follows from \eqref{dtF}, \eqref{param} and \eqref{BS} that
\[
|\la \varepsilon,\partial_t F^\perp-\ii c^2 F^\perp\ra|
\lesssim 
t^{-3-\theta_1}\lesssim t^{-1-\theta_1-\theta}.
\]
Thus, using \eqref{new:EF},
\[\frac d{dt}\la \varepsilon, F^\perp\ra=- \la\ii (D_\varepsilon \mathbf{K}+c^2 \varepsilon), F^\perp\ra+O(t^{-1-\theta_1-\theta}).\]
From \eqref{dtG} and similar estimates, we also obtain 
\[\frac d{dt}\la \eta,G^\perp\ra=- \la\ii (D_\eta\mathbf{K}+\eta), G^\perp\ra+O(t^{-1-\theta_1-\theta})\]

Finally, we compute
\begin{equation*}
\frac d{dt}\left[2\beta \la \varepsilon, \ii \phi\ra\right] 
 = 2\dot \beta \la \varepsilon, \ii \phi\ra
+2\beta \la \partial_t\varepsilon, \ii \phi\ra+
2\beta\la \varepsilon,\ii \partial_t \phi\ra.
\end{equation*}
The first term is estimated  $|\dot \beta \la \varepsilon, \ii \phi\ra|\lesssim t^{-3} \|\varepsilon\|_{H^1} \lesssim t^{-3-\ths}$ using \eqref{beta}.
Then, using \eqref{eq:ee},
\begin{equation*}
\la \partial_t\varepsilon, \ii \phi\ra+
 \la \varepsilon,\ii \partial_t \phi\ra
 =-\la \ii( D_\varepsilon\mathbf{K}+c^2\varepsilon),\ii\phi\ra
 +\la  \EE_U, \phi\ra
- \la \ii \varepsilon,\partial_t\phi-\ii c^2\phi\ra.
\end{equation*}
From \eqref{phiF},  $|\beta\la  F^\perp, \phi\ra|\lesssim t^{-5}(\log t)^2\lesssim t^{-1-2\theta}$.
From \eqref{m}, the expression of $\vec{m}_1^a\cdot\vec\md_1$ and Lemma~\ref{tech},
\[
|\beta\la  \vec{m}_1^a\cdot\vec\md_1, \phi\ra|\lesssim 
t^{-1}|\vec{m}_1^a| \left(|\la P,\phi\ra|
+|\la x_1P,\phi\ra|\right)
\lesssim t^{-3-\theta}(\log t)^2\lesssim t^{-1-\theta_1-\theta}.
\]
Next, from~\eqref{param}, the expression of $\vec{m}_\varphi\cdot\vec\md_\varphi$ and Lemma~\ref{tech},
\[
|\beta\la  \vec{m}_\varphi\cdot\vec\md_\varphi, \phi\ra|\lesssim 
t^{-1} \left(|\dot \gamma_1+\dot \beta_1\sigma_2+\beta_1^2||\la \varphi,\phi\ra|
+ |\dot \beta_1+a||\la x_2\varphi,\phi\ra|\right) \lesssim t^{-3-\theta}\lesssim t^{-1-\theta_1-\theta}.
\]

Last, using \eqref{dtP} and \eqref{param}, 
\[
|\beta\la \ii \varepsilon,\partial_t\phi-\ii c^2\phi\ra|\lesssim
t^{-3} \|\varepsilon\|_{L^2} \lesssim t^{-3-\ths}.
\]
Estimate~\eqref{on:S} is now proved.

\smallskip
 
\noindent\textbf{step 6.} Conclusion.
Combining the estimates \eqref{eq3.31}, \eqref{eq3.32}, \eqref{eq3.34}, \eqref{on:S}
and using the decompositions of $\EE_U$ and $\EE_V$ in \eqref{new:EF}, we have obtained
\begin{align*}
\frac d{dt} \mathbf{W}(t,\varepsilon,\eta)
&=
\la \ii (D_\varepsilon \mathbf{K}+c^2\varepsilon), \vec{m}_1^a\cdot \vec\md_1\ra
+ \la \ii (D_\varepsilon \mathbf{K}+c^2\varepsilon),2\ii \beta\phi+\vec{m}_\varphi\cdot \vec\md_\varphi\ra
\\&\quad 
+\la \ii (D_\eta \mathbf{K}+\eta), \vec{m}_2^b\cdot \vec\md_2\ra+O(t^{-1-2\ths}(\log t)^{-1}).
\end{align*}
We claim
\begin{equation}\label{diese2}
|\la \ii (D_\varepsilon \mathbf{K}+c^2\varepsilon), \vec{m}_1^a\cdot \vec\md_1\ra|
\lesssim t^{-(1+\ths+\theta)}.
\end{equation}
Indeed, following the proof of \eqref{diese}, using Lemma~\ref{tech}, the relations \eqref{e:s}, \eqref{BS} and the third orthogonality condition in \eqref{ortho},
it holds
\begin{align*}
&\left|\la -\partial_x^2\varepsilon+c^2\varepsilon-h(U+\varepsilon,V+\eta)+h(U,V), \partial_1P \ra\right|
\lesssim t^{-(1+\ths)},\\
&\left|\la -\partial_x^2\varepsilon+c^2\varepsilon-h(U+\varepsilon,V+\eta)+h(U,V),\ii P \ra\right|
\lesssim t^{-(1+\ths)},\\
&\left|\la -\partial_x^2\varepsilon+c^2\varepsilon-h(U+\varepsilon,V+\eta)+h(U,V),\ii x_1P \ra\right|
\lesssim t^{-(1+\ths)}\log t.
\end{align*}
Thus, \eqref{diese2} follows from \eqref{m} and \eqref{beta}.
Similarly,
\[
|\la \ii (D_\eta \mathbf{K}+\eta), \vec{m}_2^b\cdot \vec\md_2\ra|
\lesssim t^{-(1+\ths+\theta)}.
\]

Finally, we remark that  from the explicit expression of
$\vec{m}_\varphi\cdot \vec\md_\varphi$ and ~\eqref{param}
\[
\|2\ii \beta\phi+\vec{m}_\varphi\cdot \vec\md_\varphi\|_{H^1}\lesssim t^{-1-\theta},
\]
which implies by integration by parts and then \eqref{BS}
\[
|\la \ii (D_\varepsilon \mathbf{K}+c^2\varepsilon),2\ii \beta\phi+\vec{m}_\varphi\cdot \vec\md_\varphi\ra|
\lesssim t^{-1-\ths-\theta}.
\]
The proof of Proposition~\ref{prop:coer} is complete.
\end{proof}

\subsection{Bootstrap argument}
\begin{proposition}\label{pr:boot}
There exists $\tzero>1$ large enough and for any $\tin\geq \tzero$, there exists $\sigma_\infty$ satisfying \eqref{eq:siginf}
such that the solution $\big(\begin{smallmatrix} u\\v\end{smallmatrix}\big)$ of \eqref{snls} corresponding to initial data
$\big(\begin{smallmatrix} U \\ V\end{smallmatrix}\big)(\tin)$ at $t=\tin$ with parameters chosen as in \eqref{initial}-\eqref{eq:betainf}
admits a decomposition \eqref{decomposition}-\eqref{ortho} which satisfies \eqref{BS} on $[\tzero,\tin]$.
Moreover, $| \gamma_1|+| \gamma_2|\lesssim t^{1-\theta_1}$ on $[\tzero,\tin]$.
\end{proposition}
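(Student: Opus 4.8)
The plan is a standard bootstrap/shooting scheme. Fix $\tin$, run the solution of \eqref{snls} backward from the final data at $\tin$ (where $\varepsilon(\tin)=\eta(\tin)=0$), and select the free parameter $\sigma_\infty$ so that \eqref{BS} is preserved all the way down to a fixed $\tzero$. For $\sigma_\infty$ ranging over the interval $\mathcal I$ cut out by \eqref{eq:siginf}, let $\tstar=\tstar(\sigma_\infty)\in[\tzero,\tin]$ denote the exit time, i.e. the infimum of times $\tau$ such that all four estimates in \eqref{BS} hold on $[\tau,\tin]$; the goal is to produce a $\sigma_\infty$ with $\tstar=\tzero$.

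First I would close the three non-critical bounds of \eqref{BS} on $[\tstar,\tin]$, showing they cannot trigger the exit. For the remainder, Proposition~\ref{prop:coer} gives $|\frac{d}{dt}\WW|\lesssim t^{-1-2\ths}(\log t)^{-1}$; since $\WW(\tin)=0$, integrating from $t$ to $\tin$ yields $|\WW(t)|\lesssim t^{-2\ths}(\log t)^{-1}$, and the coercivity \eqref{'eq3.25} then gives $\|\varepsilon\|_{H^1}^2+\|\eta\|_{H^1}^2\lesssim t^{-2\ths}(\log t)^{-1}+t^{-2\theta}\ll t^{-2\ths}$, a strict improvement of the first line of \eqref{BS}. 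For the parameters I would integrate the modulation equations of Lemma~\ref{le:mod}: feeding the assumed distance bound (fourth line of \eqref{BS}) into \eqref{est:ab} gives $\dot\beta_1=-\frac{1}{2c(c+1)t^2}+O(t^{-1-\thc})$, and analogously for $\beta_2$, so that integrating from $\tin$ with data \eqref{initial}-\eqref{eq:betainf} produces $|\beta_j-\bar\beta_j|\lesssim t^{-\thc}$ and, after a second integration using \eqref{param}, $|\sigma_j-\bar\sigma_j|\lesssim t^{1-\thc}$ (writing $\bar\beta_j$, $\bar\sigma_j$ for the explicit profiles appearing in \eqref{BS}). Since $\tht<\thc$, these strictly improve the second and third lines of \eqref{BS}. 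The bound on $\gamma_1,\gamma_2$ claimed at the end follows at once by integrating $|\dot\gamma_j|\lesssim t^{-\theta}$ from $\gamma_j(\tin)=0$, giving $|\gamma_j|\lesssim t^{1-\theta}\leq t^{1-\ths}$.

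The heart of the matter is the last (critical) line of \eqref{BS}, which lies in the unstable direction of the modulation dynamics and cannot be improved by direct integration. Setting $z=\frac{e^{c\sigma}}{\Omega_c t}-1$ and $W=2ct\beta-1$, and combining $\dot\sigma=2\beta+O(t^{-\theta})$ with $\dot\beta=-(c+1)\alpha_c e^{-2c\sigma}+O(t^{-1-\ths})$ together with $\Omega_c^2=2c(c+1)\alpha_c$, one is led after linearization to the Euler-type equation $\ddot z=\frac2{t^2}z+(\text{error})$, whose homogeneous solutions are $t^2$ (unstable) and $t^{-1}$ (stable), as in the formal derivation of Section~\ref{formalc}. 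The choice \eqref{eq:betainf} places the final data $(\sigma(\tin),\beta(\tin))=(\sigma_\infty,\frac{\Omega_c}{2c}e^{-c\sigma_\infty})$ on a quadratic neighborhood of the stable line $W=-z$, so the unstable component is $O(z(\tin)^2)$. Running backward, the forward-stable mode $\sim t^{-1}$ grows and is what can drive the exit. I would then record the transversality property: at any $\tstar>\tzero$ where the distance bound saturates, the quantity $N(t):=z(t)^2\,t^{-2(1-\thc)}$ satisfies $N(\tstar)=1$ and $\frac{d}{dt}N(\tstar)<0$ (because $\thc<2$ and the stable mode dominates at the saturation scale $t^{1-\thc}$, the unstable contribution being negligible), so the exit is transversal and both $\tstar$ and $\mathrm{sign}\,z(\tstar)$ depend continuously on $\sigma_\infty$.

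Finally I would run the topological (shooting) argument. Suppose, for contradiction, that $\tstar(\sigma_\infty)>\tzero$ for every $\sigma_\infty\in\mathcal I$. By the second step the only bound that can saturate at $\tstar$ is the distance bound, so $\Phi(\sigma_\infty):=\mathrm{sign}\,z(\tstar(\sigma_\infty))$ is well defined and, by transversality and continuous dependence of the flow on the data, continuous on $\mathcal I$. At the two endpoints of $\mathcal I$ one has $N(\tin)=1$ by \eqref{eq:siginf}, hence $\tstar=\tin$ and $\Phi=\pm1$ with opposite signs. A continuous $\{-1,+1\}$-valued function on the interval $\mathcal I$ cannot take both values, a contradiction; therefore some $\sigma_\infty$ yields $\tstar=\tzero$, i.e. \eqref{BS} holds on all of $[\tzero,\tin]$. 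The delicate point, and the main obstacle, is precisely this last step: isolating the genuinely scalar unstable direction inside the coupled $(\sigma,\beta)$ system, proving the transversality of $N$ at the boundary (which requires keeping the error terms strictly below the saturation scale $t^{1-\thc}$ and using the strict improvements of the non-critical bounds), and establishing continuous dependence on $\sigma_\infty$ strongly enough to run the intermediate value argument.
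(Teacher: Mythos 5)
Your proposal follows essentially the same route as the paper's proof: strict improvement of the non-critical bounds in \eqref{BS} by integrating the energy estimate of Proposition~\ref{prop:coer} and the modulation equations of Lemma~\ref{le:mod} backward from the data \eqref{initial}--\eqref{eq:betainf}, then a shooting argument in $\sigma_\infty$ for the critical distance bound, with a transversal-exit property giving continuity of the exit time and an intermediate-value contradiction. The only (cosmetic) difference is in how "the unstable component stays negligible" is made rigorous: the paper introduces the almost-conserved quantity $g=\beta^2-\frac{(1+c)\alpha_c}{2c}e^{-2c\sigma}$, which vanishes at $\tin$ by \eqref{eq:betainf} and satisfies $\dot g=O(t^{-2-\ths})$, and then runs the transversality computation on $\xi=\bigl(\frac{\zeta}{t}-1\bigr)^2$ with $\zeta=\frac{e^{c\sigma}}{\Omega_c}$ (your $N(t)$ equals $\xi(t)\,t^{-2(1-\thc)}$), rather than your linearized Euler-equation mode decomposition, which is the same mechanism phrased at the linear level.
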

\begin{proof}
For $\tzero$ large enough, for any $\tin\geq \tzero$ and any $\sigma_\infty$ satisfying \eqref{eq:siginf}, we define
\[
\tstar=\tstar(\tin,\sigma_\infty)=\inf\{t\in [\tzero,\tin] \mbox{ such that \eqref{BS} holds on $[t,\tin]$}\} \in [\tzero,\tin].
\]
We prove by contradiction that, provided $\tzero$ is large enough independent of $\tin$, there exists at least a value of $\sigma_\infty$ satisfying \eqref{eq:siginf} such that $\tstar=\tzero$. We work only on the time interval $[\tstar,\tin]$ on which the boostrap estimates \eqref{BS} hold.

First, we strictly improve the estimates of $\varepsilon$ and $\eta$ in \eqref{BS}.
Indeed, integrating \eqref{eq3.26} on $[t,\tin]$ and using \eqref{'eq3.25}, it holds
\[
\|\varepsilon\|_{H^1}^2+\|\eta\|_{H^1}^2\lesssim t^{-2\ths}(\log t)^{-1},
\]
which strictly improves the estimate in \eqref{BS} for large $t$.

Next, we close the estimates on $\beta_1$, $\beta_2$ and $\beta$ in \eqref{BS}.
Using the estimate of $\sigma$ in \eqref{BS}, \eqref{beta}, \eqref{est:ab} and the expression of $\Omega_c$, it holds
\[
\left| \dot \beta_1 +\frac {1}{2c(c+1) t^2}\right|\lesssim t^{-1-\thc}.
\]
At $\tin$, we remark that by \eqref{eq:betainf} and \eqref{eq:siginf},
\[
\left|\beta_\infty-\frac1{2c\tin}\right|\lesssim \tin^{-\thc} \quad\mbox{and so}\quad
\left|\beta_1(\tin)-\frac1{2c(c+1)\tin}\right|\lesssim \tin^{-\thc}.
\]
Integrating on $[t,\tin]$ and using \eqref{initial} for $\beta_1$, we obtain
\[
\left| \beta_1 -\frac {1}{2c(c+1) t}\right|\lesssim t^{-\thc},
\]
which strictly improves \eqref{BS} for $\beta_1$ provided that $t$ is large enough.
Improving the estimate for $\beta_2$ (and then $\beta$) is similar.

Then, using \eqref{param}, we find
\[
\left| \dot \sigma_1 - \frac {1}{c(c+1) t}\right|\lesssim t^{-\thc}.
\]
Integrating on $[t,\tin]$, using \eqref{initial} and \eqref{eq:siginf} 
we obtain
\[
\left|\sigma_1 - \frac{\log (\Omega_c t)}{c(c+1)} \right|
\lesssim t^{1-\theta_2},
\]
which strictly improves the  estimate in \eqref{BS}.
The estimate on $\sigma_2$ is improved similarly.

We only have to improve the estimate on $\sigma$ to finish the bootstrap argument. This is where we need to argue by contradiction
(see \cite{CMM} for a similar argument).
Using \eqref{param}, \eqref{beta} and \eqref{est:ab}, it holds, on the interval $[\tstar,\tin]$,
\[
| \dot\sigma-2\beta|\lesssim t^{-\ths} \quad\mbox{and}\quad 
\left|\dot \beta+(1+c)\alpha_c e^{-2c\sigma}\right|\lesssim t^{-1-\ths}.
\]
Set $g=\beta^2-\frac{(1+c)\alpha_c}{2c} e^{-2c\sigma}$, so that by the above estimates and \eqref{eq:betainf} it holds
\begin{equation*}
\dot g = 2\beta\dot \beta +(1+c)\alpha_c \dot \sigma e^{-2c\sigma}=O(t^{-2-\ths})\quad
\mbox{and}\quad g(\tin)=0.
\end{equation*}
By integration on $[t,\tin]$, this yields
\[
\left|\beta^2-\frac{(1+c)\alpha_c}{2c} e^{-2c\sigma}\right|\lesssim t^{-1-\ths}\quad\mbox{and so}\quad
\left|2\beta-\frac{\Omega_c}{c}e^{-c\sigma}\right|\lesssim t^{-\ths}.
\]
Define
\[
\zeta(t)=\frac{e^{c\sigma}}{\Omega_c}\quad\mbox{and}\quad
\xi(t)= \left(\frac{\zeta(t)}{t}-1\right)^{2}.
\]
The previous estimates imply
\begin{equation}\label{est:zeta}
|\dot \zeta (t) -1|\lesssim t^{1-\ths}.
\end{equation}
Assume for the sake of contradiction that for all $\zeta_\sharp\in [-1,1]$, the choice
\[
\zeta(\tin)= \tin+\zeta_\sharp t^{2-\thc}
\]
leads to $\tstar\in (\tzero,\tin]$. By a continuity argument, this means that the bootstrap estimates are reached at $\tstar$.
Since all estimates in \eqref{BS} except the one on $\sigma$, have been strictly improved on $[\tstar,\tin]$, this yields
\begin{equation}\label{saturate}
\bigg| \frac{e^{c\sigma(\tstar)}}{\Omega_c \tstar}-1 \bigg|= \tstar^{1-\thc}.
\end{equation}
Following the argument of \cite{CMM}, we remark that
for any $t\in [\tstar,\tin]$ satisfying~\eqref{saturate},
using \eqref{est:zeta} and $\thc<\ths$, it holds
(taking $T_0$ large enough)
\begin{equation*}
\dot \xi(t) = 2(\dot \zeta(t)-1) (\zeta(t)-t) t^{-2}
-2 (\zeta(t)-t)^2t^{-3}
=- 2t^{1-2\thc} \left( 1+O(t^{\thc-\ths})\right) <0.
\end{equation*}
This transversality condition
implies that  $\tstar$ is a continuous function of 
$\sigma_\infty$ and thus
\[
\Phi : \zeta_\sharp\in [-1,1]\mapsto \tstar^{\thc-2} (\zeta(\tstar)-\tstar)\in \{-1,1\}
\]
is also a continuous function whose image is $\{-1,1\}$, which is contradictory.

To complete the proof of Proposition~\ref{pr:boot}, we observe that from \eqref{param}, $|\dot\gamma_1|+|\dot\gamma_2|\lesssim t^{-\theta}$
holds on the interval $[\tzero,\tin]$.
Integrating and using \eqref{initial}, this gives the uniform estimate $| \gamma_1|+| \gamma_2|\lesssim t^{1-\theta}$ on $[\tzero,\tin]$.
 \end{proof}

\subsection{End of the proof of Theorem~\ref{th:2} by compactness}
We use Proposition~\ref{pr:boot} with $\tin=n$, for any $n\geq \tzero$, to construct a sequence of solutions 
$\big(\begin{smallmatrix} u_n\\v_n\end{smallmatrix}\big) \in \mathcal{C}([\tzero,n],H^1 \times H^1)$
of \eqref{snls} such that, for some $\delta>0$, on $[\tzero, n]$,
\begin{equation}\label{esti:main}
\left\|\begin{pmatrix}
u_n \\ v_n 
\end{pmatrix} - \begin{pmatrix}
e^{\ii c^2 t} Q_c \left(\cdot - \frac{\log t}{c (c+1)} - \frac{\log \Omega_c}{c (c+1)} \right) \\[6pt]
 e^{\ii t} Q \left(\cdot + \frac{\log t}{c+1} - \frac{\log \Omega_c}{c+1} \right)
\end{pmatrix}
\right\|_{H^1 \times H^1} \lesssim t^{-\delta}.
\end{equation}
Now, we adapt from \cite{MMnls} (in the scalar case) and from \cite{IL} (for the vector
case), the following convergence result.
\begin{lemma} There exists $\big(\begin{smallmatrix} u_0\\v_0\end{smallmatrix}\big) \in H^1(\RR) \times H^1(\RR)$ such that
up to a subsequence,
as $n \to \infty$ 
\begin{align*}
&\big(\begin{smallmatrix} u_n\\v_n\end{smallmatrix}\big)(\tzero) \rightharpoonup \big(\begin{smallmatrix} u_0\\v_0\end{smallmatrix}\big)\mbox{ weakly in } H^1(\RR) \times H^1(\RR) \\
&\big(\begin{smallmatrix} u_n\\v_n\end{smallmatrix}\big)(\tzero) \to \big(\begin{smallmatrix} u_0\\v_0\end{smallmatrix}\big) \mbox{ in } H^s(\RR) \times H^s(\RR) \mbox{ for any } 0 \leq s <1.
\end{align*}
\end{lemma}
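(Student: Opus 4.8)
The plan is to obtain the limit by a standard weak-compactness argument, upgrading weak $H^1\times H^1$ convergence to strong $H^s\times H^s$ convergence ($s<1$) by combining local compactness with a uniform spatial-localization (tightness) estimate.

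First I would record a uniform bound. Evaluating \eqref{esti:main} at the fixed time $\tzero$, each $\big(\begin{smallmatrix} u_n\\ v_n\end{smallmatrix}\big)(\tzero)$ lies within $\tzero^{-\delta}$ in $H^1\times H^1$ of the single fixed profile obtained by freezing the explicit solitons at $t=\tzero$; since that profile has finite, $n$-independent norm, the sequence is bounded in $H^1\times H^1$ uniformly in $n$. As $H^1\times H^1$ is a Hilbert space, a subsequence converges weakly to some $\big(\begin{smallmatrix} u_0\\ v_0\end{smallmatrix}\big)$, which automatically lies in $H^1\times H^1$ with norm bounded by the $\liminf$ (weak lower semicontinuity).

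Next I would prove strong convergence in $L^2(\RR)\times L^2(\RR)$. On every bounded interval $I$ the embedding $H^1(I)\hookrightarrow L^2(I)$ is compact (Rellich--Kondrachov), so weak $H^1$ convergence yields strong $L^2(I)$ convergence along a further subsequence; a diagonal argument over an exhaustion of $\RR$ by bounded intervals gives strong $L^2_{\mathrm{loc}}(\RR)$ convergence. To pass from local to global it then suffices to control the tails, i.e.\ to show $\sup_n \|\big(\begin{smallmatrix} u_n\\ v_n\end{smallmatrix}\big)(\tzero)\|_{L^2(|x|>M)}\to 0$ as $M\to +\infty$. Once strong $L^2(\RR)$ convergence is established, the interpolation inequality $\|f\|_{H^s}\lesssim \|f\|_{L^2}^{1-s}\|f\|_{H^1}^{s}$, combined with the uniform $H^1$ bound from the first step, immediately upgrades it to strong $H^s\times H^s$ convergence for every $0\le s<1$, which is the assertion.

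The main obstacle is precisely the uniform tightness of $\{\big(\begin{smallmatrix} u_n\\ v_n\end{smallmatrix}\big)(\tzero)\}$ in $L^2$: it does \emph{not} follow from the $H^1$ closeness to the fixed profile alone, since the error $\big(\begin{smallmatrix}\varepsilon\\ \eta\end{smallmatrix}\big)(\tzero)$ is only controlled globally in $H^1$ and could a priori carry a nonvanishing amount of mass off to spatial infinity. To rule this out I would establish an $n$-independent spatial-decay estimate on $\big(\begin{smallmatrix} u_n\\ v_n\end{smallmatrix}\big)(\tzero)$, adapting the localized-mass monotonicity arguments of \cite{MMnls} and \cite{IL}: propagating a localized-mass control backward from $t=\tin=n$ (where $\varepsilon=\eta=0$) down to $t=\tzero$, and exploiting the exponential localization of the solitons and of the forcing terms $F$, $G$, one obtains a uniform exponential tail bound that yields the required tightness. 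Equivalently, one may combine conservation of mass with this localization to show $\|\big(\begin{smallmatrix} u_n\\ v_n\end{smallmatrix}\big)(\tzero)\|_{L^2}\to\|\big(\begin{smallmatrix} u_0\\ v_0\end{smallmatrix}\big)\|_{L^2}$, which, together with weak $L^2$ convergence, gives strong $L^2(\RR)\times L^2(\RR)$ convergence in the Hilbert space.
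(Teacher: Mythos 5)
Your proposal is correct and follows essentially the same route as the paper: the paper simply invokes the adaptation of the compactness arguments of \cite{MMnls} and \cite{IL}, which consist precisely of the steps you describe — uniform $H^1\times H^1$ bound from \eqref{esti:main} at $t=\tzero$, weak compactness, local compactness by Rellich's theorem, and the key uniform tightness obtained by propagating localized-mass estimates backward from $t=\tin=n$ (where $\varepsilon=\eta=0$), followed by interpolation to get strong $H^s\times H^s$ convergence for $0\le s<1$. You also correctly identify that the tightness is the only nontrivial point and cannot be deduced from the $H^1$ closeness alone, which matches the role played by the monotonicity/localization arguments in the cited references.
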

We consider $\big(\begin{smallmatrix} u\\v\end{smallmatrix}\big)$ the solution of \eqref{snls} corresponding to initial data $\big(\begin{smallmatrix} u_0\\v_0\end{smallmatrix}\big)$ at $t=\tzero$. By $H^1(\RR) \times H^1(\RR)$ boundedness and local well-posedness of Cauchy problem in $H^s(\RR) \times H^s(\RR)$ for any $0 \leq s < 1$ (see \emph{e.g.} \cite{Ca03}), we have the continuous dependence of the solution on the initial data, so for all $t \in [\tzero, + \infty)$,
as $n\to \infty$,
\begin{align*}
&\big(\begin{smallmatrix} u_n\\v_n\end{smallmatrix}\big)(t) \rightharpoonup \big(\begin{smallmatrix} u\\v\end{smallmatrix}\big)(t)\mbox{ in } H^1(\RR) \times H^1(\RR),\\
&\big(\begin{smallmatrix} u_n\\v_n\end{smallmatrix}\big)(t) \to \big(\begin{smallmatrix} u\\v\end{smallmatrix}\big)(t) \mbox{ in } H^s(\RR) \times H^s(\RR) ,\quad 0 \leq s <1.
\end{align*}
Passing to the weak limit as $n \to \infty$ in the uniform estimates \eqref{esti:main}, 
the solution $\big(\begin{smallmatrix} u\\v\end{smallmatrix}\big)$ satisfies Theorem~\ref{th:2}.

\section{Sketch of the proof of Theorem~\ref{th:1}}
\subsection{Approximate solution in the case $c=1$}
In this case, the approximate solution and the solution are symmetric (\emph{i.e.} $u(t,x)=v(t,-x)$) and thus we 
have $\sigma_1=-\sigma_2=\frac{\sigma} 2$, $\beta_1=-\beta_2=\frac{\beta}2$ and $\gamma_1=\gamma_2$.
Using the same notation as in Sections~\ref{S:3} and~\ref{sec:th:2}, we define
(the function $B$ is introduced in Lemma~\ref{le:AB})
\begin{align*}
U&=P+\varphi,\quad P (t,x) = Q(x - \sigma_1 (t)) e^{\ii\Gamma_1(t,x)}, \quad \varphi(t,x) =e^{-\sigma(t)} B(x-\sigma_2(t)) e^{\ii \Gamma_1(t,x)},
\\
V&=R+\psi,\quad R (t,x) = Q(x - \sigma_2 (t)) e^{\ii\Gamma_2(t,x)}, \quad \psi(t,x) =e^{-\sigma(t)} B(x-\sigma_1(t)) e^{\ii \Gamma_2(t,x)}.
\end{align*}
\begin{lemma}
It holds
\begin{equation*}
\EE_{U}= F-\vec{m}_1 \cdot \vec\md_1-\vec{m}_\varphi\cdot\vec\md_\varphi,\\
\end{equation*}
where
\begin{equation*}
F=3|P|^2\varphi+3|\varphi|^2P+|\varphi|^2\varphi
-\omega e^{2 (x -\sigma_1)} |R|^2 P
+\omega(2|R\psi|+|\psi|^2)P,
\end{equation*}
and
\begin{align*}
\vec{m}_1 =
\begin{pmatrix} \dot\gamma_1 +\dot\beta_1 \sigma_1 +\beta_1^2\\ \dot{\sigma}_1-2\beta_1 \\ \dot{\beta}_1 \end{pmatrix},
&\quad \vec\md_1=\begin{pmatrix} \ii \partial_1 P \\P \\ x_1P \end{pmatrix},\\
 \vec{m}_\varphi =
\begin{pmatrix} \dot\gamma_1 +\dot\beta_1 \sigma_2 +\beta_1^2+\ii \dot\sigma\\ \dot{\sigma}_2-2\beta_1 \\ \dot{\beta}_1 \end{pmatrix},
&\quad
\vec\md_\varphi=\begin{pmatrix} \ii \partial_1 \varphi \\ \varphi \\ x_2 \varphi \end{pmatrix}.
\end{align*}
\end{lemma}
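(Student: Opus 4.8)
The plan is to compute the exact residual $\EE_U=\ii\partial_t U+\partial_x^2 U+(|U|^2+\omega|V|^2)U$ term by term, following verbatim the scheme of the corresponding lemma in Section~\ref{S:3}, now specialized to $c=1$ (so that $Q_c=Q$ and the operator $\Lc$ becomes $\Lu$) and with the refinement $A$ replaced by $B$ from Lemma~\ref{le:AB}. The only genuinely new feature is that here $V=R+\psi$ carries its own correction $\psi$, which will account for the extra term $\omega(2|R\psi|+|\psi|^2)P$ in $F$. First I would isolate the single-soliton part $\ii\partial_t P+\partial_x^2 P+|P|^2P$: differentiating $P=Q(x-\sigma_1)e^{\ii\Gamma_1}$ in $t$ and $x$ and invoking the profile equation $Q''-Q+Q^3=0$, the stationary contribution cancels and only the parameter-derivative terms survive, assembling exactly into $-\vec{m}_1\cdot\vec\md_1$, precisely as in Section~\ref{S:3}.

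Next I would expand the cubic self-interaction $|U|^2U-|P|^2P$ with $U=P+\varphi$. The key observation is that $P$ and $\varphi$ share the phase $e^{\ii\Gamma_1}$ and have real profiles, so $P\overline\varphi=\overline P\varphi$ is real; consequently the would-be anomalous terms collapse, $2|P|^2\varphi+P^2\overline\varphi=3|P|^2\varphi$ and $2|\varphi|^2P+\overline P\varphi^2=3|\varphi|^2P$. This produces exactly the first three terms $3|P|^2\varphi+3|\varphi|^2P+|\varphi|^2\varphi$ of $F$.

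It remains to treat the coupling $\omega|V|^2U$. Writing $|V|^2=|R|^2+2\Re(R\overline\psi)+|\psi|^2$, I would group the leading piece $\omega|R|^2(P+\varphi)$ with the linear part $\ii\partial_t\varphi+\partial_x^2\varphi$. After extracting the parameter derivatives into $-\vec{m}_\varphi\cdot\vec\md_\varphi$, the remainder equals $-e^{-\sigma}(\Lu B)(x-\sigma_2)e^{\ii\Gamma_1}+\omega Q^2(x-\sigma_2)P$; substituting the equation \eqref{eq:B} for $B$ and using $e^{-\sigma}e^{x-\sigma_2}=e^{x-\sigma_1}$ rewrites this as $\omega|R|^2[Q(x-\sigma_1)-\kappa e^{x-\sigma_1}]e^{\ii\Gamma_1}$, and the asymptotic \eqref{asympQ} in the form $Q(y)-\kappa e^{y}=-e^{2y}Q(y)$ then collapses it to precisely $-\omega e^{2(x-\sigma_1)}|R|^2P$. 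The remaining coupling terms $\omega(2\Re(R\overline\psi)+|\psi|^2)(P+\varphi)$, which are new relative to Section~\ref{S:3} and arise from the symmetric correction $\psi$, yield the last term $\omega(2|R\psi|+|\psi|^2)P$ of $F$. By the symmetry $u(t,x)=v(t,-x)$ the analogous identity for $\EE_V$ then follows without further computation.

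The main obstacle — in truth the only nonroutine point — is this last cancellation: one must verify that the equation \eqref{eq:B} for $B$ and the two-term expansion \eqref{asympQ} combine to leave exactly the residual $-\omega e^{2(x-\sigma_1)}|R|^2P$, keeping careful track of the scaling factor $e^{-\sigma}$ and of the distinct shifts $x-\sigma_1$ versus $x-\sigma_2$. The elementary identity $e^{-\sigma}e^{x-\sigma_2}=e^{x-\sigma_1}$ is precisely what allows the refined profile $B$, localized at $\sigma_2$, to absorb the tail of $P$ emanating from $\sigma_1$; everything else is bookkeeping parallel to Section~\ref{S:3}.
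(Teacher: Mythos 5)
Your proposal is correct and takes essentially the same approach as the paper: the paper only proves the analogous lemma for $0<c<1$ in Section~\ref{S:3} (the $c=1$ lemma is stated without proof in the sketch), and your argument is precisely that computation transposed — the single-soliton part via $Q''-Q+Q^3=0$ assembling into $-\vec{m}_1\cdot\vec\md_1$, the real-phase cancellation giving $3|P|^2\varphi+3|\varphi|^2P+|\varphi|^2\varphi$, and the grouping of $\omega|R|^2(P+\varphi)$ with $\ii\partial_t\varphi+\partial_x^2\varphi$ resolved through \eqref{eq:B} and \eqref{asympQ}, with the identity $e^{-\sigma}e^{x-\sigma_2}=e^{x-\sigma_1}$ playing exactly the role it plays (with factors of $c$) in the proof of \eqref{on:UV}. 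The only caveat concerns the final term: your exact bookkeeping yields the coupling contribution $\omega\bigl(2\Re(R\overline\psi)+|\psi|^2\bigr)(P+\varphi)$, which matches the stated $\omega(2|R\psi|+|\psi|^2)P$ only up to the higher-order piece $\omega\bigl(2\Re(R\overline\psi)+|\psi|^2\bigr)\varphi$ and up to writing $|R\psi|$ for the real quantity $R\overline\psi$; this imprecision (like the permuted pairing of the entries of $\vec m_1$ with those of $\vec\md_1$ in the statement) sits in the paper's own sketch-level formulation, so your derivation is, if anything, the more accurate one.
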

We set
\[
a=\frac 12\la F,\partial_1P\ra .
\]
\begin{lemma} It holds
\begin{equation}\label{eq:ab1}
a=\alpha \sigma e^{-2\sigma}+O(e^{-2\sigma})
\quad \mbox{where}\quad \alpha=32 \omega.
\end{equation}
\end{lemma}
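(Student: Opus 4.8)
The plan is to compute $a=\frac12\la F,\partial_1 P\ra$ by isolating the single term of $F$ that produces the logarithmic factor $\sigma$, and then to check that every other term contributes only at order $e^{-2\sigma}$. Since the phase $e^{\ii\Gamma_1}$ factors out of each pairing with $\partial_1 P=Q'(x-\sigma_1)e^{\ii\Gamma_1}$, and $|P|^2$ and $R\bar\psi$ are real, all inner products reduce, after the shift $x\mapsto x+\sigma_1$ and the identity $\sigma_1-\sigma_2=\sigma$, to real integrals of products of $Q$, $B$ and their derivatives. I would first record that $Q^2Q'=\tfrac13(Q^3)'$ is localized at scale $O(1)$ with $Q^2(x)Q'(x)\lesssim e^{-3|x|}$, while \eqref{sur:B} gives $|B(x)|\lesssim(1+|x|)Q(x)$. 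Combining these with Lemma~\ref{tech} shows immediately that the terms $|\varphi|^2\varphi$, $3|\varphi|^2P$ and $\omega(2|R\psi|+|\psi|^2)P$ pair with $\partial_1 P$ to give $O(e^{-2\sigma})$ with no factor $\sigma$ (they localize $B$ near the origin, where it is bounded). The remaining term $-\omega e^{2(x-\sigma_1)}|R|^2P$ is the delicate one: its naive leading $e^{-2\sigma}$-coefficient is proportional to $\int QQ'\,dx=0$, and an integration by parts (moving the derivative off $Q^2$) confirms that it too is $O(e^{-2\sigma})$ without a factor $\sigma$, since it involves only pure exponential tails of $Q$.

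The heart of the matter is therefore the term $3|P|^2\varphi$, for which
\begin{equation*}
\la 3|P|^2\varphi,\partial_1 P\ra = 3e^{-\sigma}\int Q^2(x)Q'(x)B(x+\sigma)\,dx .
\end{equation*}
The factor $\sigma$ comes from the resonant tail of $B$. Indeed, differentiating $Q''-Q+Q^3=0$ and using \eqref{asympQ} one has $e^{2x}Q^2(x)\to\kappa^2$ as $x\to+\infty$, so the forcing in \eqref{eq:B} satisfies $\kappa\omega e^xQ^2(x)=\kappa^3\omega e^{-x}(1+o(1))$. Since the decaying solution of the limiting equation $-w''+w=0$ is exactly $e^{-x}$, this resonance forces
\begin{equation*}
B(x)=\frac{\kappa^3\omega}{2}\,x e^{-x}\,(1+o(1)),\qquad x\to+\infty,
\end{equation*}
a refinement of \eqref{sur:B} that I would establish by variation of parameters, the coefficient being $\tfrac12\lim_{x\to+\infty}\kappa\omega e^{2x}Q^2(x)=\tfrac{\kappa^3\omega}{2}$. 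Substituting this into the integral, and using that $Q^2Q'$ is localized so the expansion of $B(\cdot+\sigma)$ is uniform on its support, yields
\begin{equation*}
\int Q^2Q'B(\cdot+\sigma)\,dx = \frac{\kappa^3\omega}{2}\,\sigma e^{-\sigma}\int Q^2(x)Q'(x)e^{-x}\,dx + O(e^{-\sigma}).
\end{equation*}

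It then remains to evaluate the constant. Writing $Q^2Q'=\tfrac13(Q^3)'$ and integrating by parts gives $\int Q^2Q'e^{-x}\,dx=\tfrac13\int Q^3 e^{-x}\,dx$; with $Q=\sqrt2\,\sech$ and $e^{-x}=\cosh x-\sinh x$ one finds $\int \sech^3 x\,e^{-x}\,dx=\int\sech^2 x\,dx=2$, the odd part $\sech^3 x\,\sinh x$ integrating to zero, whence $\int Q^2Q'e^{-x}\,dx=\tfrac{4\sqrt2}{3}$. Collecting the pieces,
\begin{equation*}
a=\frac12\la F,\partial_1 P\ra=\frac12\cdot 3e^{-\sigma}\cdot\frac{\kappa^3\omega}{2}\cdot\frac{4\sqrt2}{3}\,\sigma e^{-\sigma}+O(e^{-2\sigma})=\sqrt2\,\kappa^3\omega\,\sigma e^{-2\sigma}+O(e^{-2\sigma}),
\end{equation*}
and $\sqrt2\,\kappa^3=\sqrt2\,(2\sqrt2)^3=32$ gives $\alpha=32\omega$. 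The hard part will be twofold: pinning down the exact resonant coefficient $\tfrac{\kappa^3\omega}{2}$ of the tail of $B$, which is precisely the mechanism distinguishing this symmetric regime (a factor $\sigma$) from the case $0<c<1$ (no such factor, cf. \eqref{est:ab}); and verifying rigorously, rather than heuristically, that the borderline term $-\omega e^{2(x-\sigma_1)}|R|^2P$ with vanishing leading coefficient carries no hidden factor $\sigma$.
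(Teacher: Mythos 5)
Your proposal is correct and reaches the right constant, but it takes a genuinely different route from the paper at the crucial step. Both arguments reduce to $\la F,\partial_1 P\ra$ and identify $3|P|^2\varphi$, i.e. $3e^{-\sigma}\int Q^2Q'\,B(x+\sigma)\,dx$, as the sole source of the factor $\sigma$, with all other terms $O(e^{-2\sigma})$. The paper, however, never determines the tail of $B$: it uses $\Lp(Q')=0$ from \eqref{e:s} to write $3Q^2Q'=Q'-(Q')''$, moves this operator onto $B$ by integration by parts, and then invokes the equation \eqref{eq:B}, so that the computation collapses to $\omega\kappa\int Q'(x-\sigma)e^xQ^2(x)\,dx$, whose $\kappa^3\sigma e^{-\sigma}$ asymptotics follows from \eqref{asympQ} alone (the ``plateau'' $\int_0^\sigma$ where the two exponential tails overlap produces the $\sigma$). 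You instead prove a refinement of \eqref{sur:B}, namely $B(x)=\tfrac{\kappa^3\omega}{2}\,x e^{-x}+O(e^{-x})$ as $x\to+\infty$, and substitute it directly. Your resonance computation of the coefficient $\tfrac{\kappa^3\omega}{2}$ is right, the variation-of-parameters sketch (Green's kernel $\tfrac12 e^{-|x-y|}$ applied to the source $\omega Q^2B+\kappa\omega e^xQ^2$, with the growing mode excluded by \eqref{sur:B}) does make it rigorous, and your evaluation $\int Q^2Q'e^{-x}\,dx=\tfrac{4\sqrt2}{3}$ together with the final arithmetic $\sqrt2\,\kappa^3\omega=32\omega$ checks out and agrees with the paper's $\tfrac{\kappa^4\omega}{2}$. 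What each approach buys: the paper's trick is shorter and needs only the crude bound \eqref{sur:B}; yours requires one extra ODE lemma but makes transparent the analytic mechanism (resonance of the forcing $\kappa\omega e^xQ^2\sim\kappa^3\omega e^{-x}$ against the decaying solution of $-w''+w=0$) that produces the logarithmic factor absent in the case $0<c<1$ of \eqref{est:ab}.

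Two minor inaccuracies, neither fatal. First, your justification for discarding $3|\varphi|^2P$ and $|\varphi|^2\varphi$ (``they localize $B$ near the origin'') is off: the pairing with $\partial_1P$ localizes near $x=\sigma_1$, where these terms see the tail $B(\cdot+\sigma)$, not $B$ near $0$; they are negligible instead because of the prefactors $e^{-2\sigma}$, $e^{-3\sigma}$, which with Lemma~\ref{tech} give $O(\sigma^3e^{-4\sigma})$, as in the paper. Your localization reasoning is correct for $\omega(2|R\psi|+|\psi|^2)P$. Second, the term $-\omega e^{2(x-\sigma_1)}|R|^2P$ is not delicate: since $e^{2x}Q^2(x)\lesssim 1$, one gets $\int e^{2x}Q|Q'|\,Q^2(x+\sigma)\,dx\lesssim e^{-2\sigma}$ in one line (this is what the paper does); no integration by parts or cancellation $\int QQ'=0$ is needed, because the statement tolerates an $O(e^{-2\sigma})$ error.
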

\begin{proof}
From the expression of $F$, one has
\begin{align*}
\la F,\partial_1 P\ra 
& =3e^{-\sigma} \int Q^2(x)Q'(x) B(x+\sigma) dx 
+3e^{-2\sigma} \int Q(x)Q'(x) B^2(x+\sigma)dx\\
& \quad +e^{-3\sigma} \int Q'(x)B^3(x+\sigma) dx
-\omega \int e^{2x}Q (x)Q'(x)Q^2(x+\sigma) dx.
\end{align*}
From \eqref{sur:B} and Lemma~\ref{tech}, the second and third terms in the right-hand side are bounded
by $\sigma^3 e^{-4\sigma}$.
The last term is bounded by
\[
\int e^{2x}Q^2(x)Q^2(x+\sigma) dx
=e^{-2\sigma} \int e^{2x}Q^2 (x-\sigma )Q^2(x) dx
\lesssim e^{-2\sigma} \int Q^2 (x-\sigma )dx\lesssim e^{-2\sigma}.
\]
For the first term, using $\Lp Q'=0$ and then \eqref{eq:B}, we compute 
\begin{align*}
3\int Q^2(x)Q'(x) B(x+\sigma) dx
&=\int Q'(x-\sigma) (-B''(x)+B(x)) dx\\
&=\omega \int Q'(x-\sigma) \left[ Q^2(x)B(x)+\kappa e^x Q^2(x)\right] dx
\end{align*}
By Lemma~\ref{tech}, we have $\int |Q'(x-\sigma) Q^2(x)B(x)|dx\lesssim e^{-\sigma}$.

We only have to compute $\int Q'(x-\sigma) e^x Q^2(x)dx$.
First, we see
\[
\int_{x<0} Q'(x-\sigma) e^x Q^2(x) dx
\lesssim e^{-\sigma} \int_{x<0} e^{4x} dx \lesssim e^{-\sigma},
\]
\[
\int_{x>\sigma} |Q'(x-\sigma)| e^x Q^2(x) dx
\lesssim e^{\sigma} \int_{x>\sigma} e^{-2x} dx \lesssim e^{-\sigma}.
\]
Second, using \eqref{asympQ}
\begin{equation*}
Q'(x-\sigma)=\kappa e^{x-\sigma} - e^{2x-2\sigma}Q(x-\sigma),\quad Q^2(x)=\kappa^2e^{-2x}+O(e^{-3x}Q(x)),
\end{equation*}
and thus
\[
\int_0^\sigma Q'(x-\sigma) e^x Q^2(x) dx
=\kappa^3 \sigma e^{-\sigma} + O( e^{-\sigma}).
\]
In conclusion,
$a= \omega\frac{\kappa^4}2 \sigma e^{-2\sigma}
+O(e^{-2 \sigma})
=32 \omega \sigma e^{-2\sigma}$.
\end{proof}

\subsection{Formal discussion for $c=1$}\label{formal1}
The previous computations leads us to
\[
\ddot \sigma =- 4\alpha \sigma e^{-2\sigma} ,\quad 2 \beta = \dot \sigma,
\]
for which the following function is an \emph{approximate solution}
\[
\sigma_0(t)=\log t+\frac 12 \log\log t+\log \Omega,\quad 
2\beta_0(t)=\frac 1t\quad \mbox{where}\quad
\Omega = \sqrt{4\alpha}=8\sqrt{2\omega}.
\]
\subsection{Bootstrap estimates in the case $c=1$}
Fix $\ths$ such that 
$1< \ths<2$. The following bootstrap estimates are used in this case: for $1\ll t\leq \tin$,
\begin{equation*}\left\{\begin{aligned}
&\|\varepsilon\|_{H^1}+\|\eta\|_{H^1}\leq t^{-\ths},\\
& \bigg|\beta-\frac 1{2t}\bigg| \leq t^{-1}(\log t)^{-\frac 14},\\
& \bigg| \frac{e^{\sigma}}{\Omega \sigma^{\frac 12} t}-1 \bigg|\leq (\log t)^{-\frac 12} ,
\end{aligned}\right.\end{equation*}
where  $\sigma_\infty$ is to be chosen satisfying
\begin{equation*}
\bigg| \frac{e^{\sigma_\infty}}{\Omega \sigma_\infty^{\frac 12} \tin}-1 \bigg|\leq (\log \tin)^{-\frac 12}.
\end{equation*}
We refer to \cite{MRlog,NV1} for similar bootstrap estimates.

The rest of the proof is similar to the one of Theorem~\ref{th:2} and we omit it.

\section{Discussion}\label{discuss}
For \eqref{snls}, with any coupling coefficient $0<\omega<1$, we have proved the existence of symmetric $2$-solitary waves (Theorem~\ref{th:1}) and of non-symmetric $2$-solitary waves (Theorem~\ref{th:2}) with logarithmic distance.
Symmetric $2$-solitons with logarithmic distance were already known in the literature for the integrable cases ($\omega=0$ and $\omega=1$) and in the scalar case \eqref{nls}. In contrast, the existence of non-symmetric $2$-solitary waves with logarithmic distance is new. In particular, it does not hold for the integrable case where instead a periodic regime exists.

An interesting remaining open question is whether non-symmetric logarithmic $2$-solitary waves exist for the non-integrable scalar \eqref{nls}. We conjecture that it is indeed the case, as long as $p\neq 3$.
Indeed, the first step of the strategy used in this paper, \emph{i.e.} the computation of an approximate solution involving the main interaction terms, works equally well for \eqref{nls} as for \eqref{snls}.
We expect a logarithmic regime with oscillations.
However,  whereas \eqref{snls} enjoys two $L^2$ conservation laws, the scalar equation~\eqref{nls} enjoys only one, which does not seem sufficient
for the energy method to apply in a context of two solitons with logarithmic distance without symmetry.

A more technical original aspect of this article is the introduction of a refinement of the energy method. 
In previous articles using approximate solutions in the context of error terms of order $t^{-k}$
(\emph{e.g.} in \cite{MRlog,NV1,NV2}), the energy method induces a loss of decay.
Here, the additional correction term $\mathbf{S}$ in Section~\ref{S:ener} allows  an estimate of the remainder
$\big( \begin{smallmatrix} \varepsilon\\ \eta\end{smallmatrix}\big)$ directly related to the size of  the error term 
$\big( \begin{smallmatrix} \EE_U\\ \EE_V\end{smallmatrix}\big)$.
We believe that this general observation will be useful elsewhere.

\end{document}